\newcommand{\norm}[1]{\left\Vert #1\right\Vert}
\def\Z{{\mathbb Z}}
\def\C{{\mathbb C}}
\def\Q{{\mathbb Q}}
\def\N{{\mathbb N}}
\newcommand*\diff{\mathop{}\!\mathrm{d}}
\def\cF{{\mathcal F}}
\newtheorem{theorem}{Theorem}[section]
\newtheorem{proposition}[theorem]{Proposition}
\newtheorem{lemma}[theorem]{Lemma}
\newcommand{\supp}{\operatorname{supp}}
\newcounter{thmcounter}
\newcounter{introthmcounter}
\newtheorem{corollary}[theorem]{Corollary}
\theoremstyle{definition}
\newtheorem{definition}[theorem]{Definition}
\newtheorem*{definition*}{Definition}
\newtheorem{question}[theorem]{Question}
\newtheorem*{question*}{Question}
\newtheorem*{remark*}{Remark}
\newtheorem*{lemma*}{Lemma 2.8}
\newtheorem*{lemma**}{Lemma 2.9}
\newcounter{proofcount}
\def\section{\@startsection{section}{1}%
  \z@{.5\linespacing\@plus.7\linespacing}
{.8\baselineskip}%
  {\normalfont\fontsize{14}{14}\centering\bfseries}%
}
\def\subsection{\@startsection{subsection}{2}%
  \z@{.4\linespacing\@plus.7\linespacing}
{.6\baselineskip}%
  {\normalfont\fontsize{12}{13}\centering\bfseries}%
}
\theoremstyle{remark}
\newtheorem{remark}[theorem]{Remark}
\def\N{{\mathbb N}}
\newcommand{\xmt}{(X,\mu,T)}
\newcommand{\I}{\mathcal{I}}
\newcommand{\gen}{\texttt{\textup{gen}}}
\newcommand{\Erdos}{Erd\H{o}s}
\newcommand{\Folner}{F\o{}lner}
\title{Asymmetric infinite sumsets in large sets of integers}
\author{Ioannis Kousek}
\address{Department of mathematics, University of Warwick}
\email{ioannis.kousek@warwick.ac.uk}
\date{}
\begin{document}

\begin{abstract}
We show that for any set $A\subset \N$ with 
positive upper density and any $\ell,m \in \N$, there exist an infinite 
set $B\subset \N$ and some $t\in \N$ so that $\{mb_1 + \ell b_2 \colon 
b_1,b_2\in B\ \text{and}\ b_1<b_2 \}+t \subset A,$ verifying a conjecture of Kra, Moreira, Richter and Robertson. 
We also consider the patterns $\{mb_1 + \ell b_2 \colon 
b_1,b_2\in B\ \text{and}\ b_1 \leq b_2 \}$, for infinite $B\subset \N$ and prove that any set $A\subset \N$ with lower density $\underline{\diff}(A)>1/2$ contains such configurations up to a shift. We show that the value $1/2$ is optimal and obtain analogous results for values of upper density and when no shift is allowed. 
\end{abstract}

\maketitle

\section{Introduction}

In \cite{kmrr2}, Kra, Moreira, Richter and 
Robertson established -- among other things -- the following 
result, resolving a well-known conjecture of Erd\H{o}s.

\begin{theorem}\label{KMRR theorem}  \cite[Theorem 1.2]{kmrr2}
For any $A\subset \N$ with positive upper Banach density there exists some infinite set $B\subset \N$ and a number $t\in \N$ such that 
$$\{b_1 + b_2\colon  b_1,b_2\in B,\ b_1 \neq b_2\} + t \subset 
A.$$    
\end{theorem}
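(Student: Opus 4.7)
The plan is to translate the combinatorial problem into ergodic theory via the Furstenberg correspondence principle and then build $B$ inductively, using the Host--Kra theory of characteristic factors for the double pattern $b_1+b_2$ as the engine driving the recurrence.

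First I would apply the Furstenberg correspondence to obtain a measure-preserving system $(X,\cB,\mu,T)$, a measurable set $E\subset X$ with $\mu(E)>0$, and a point $x_0\in X$ whose return times into $E$ lie in $A$. The combinatorial goal then becomes: find an infinite $B\subset\N$ and $t\in\N$ such that $T^{b_1+b_2+t}x_0 \in E$ for every $b_1<b_2$ in $B$. By ergodic decomposition I may assume $(X,T,\mu)$ is ergodic.

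Second, I would construct $B=\{b_1<b_2<\cdots\}$ inductively, along with a nested sequence of positive-measure ``witness sets'' $F_0 \supseteq F_1 \supseteq \cdots$, such that at step $n$ every $x\in F_n$ satisfies $T^{b_i}x\in E$ for all $i\le n$ and moreover admits infinitely many continuations $b>b_n$ for which $T^{b+b_i}x\in E$ for every $i\le n$. The existence of such continuations is governed by the positivity of multiple ergodic averages of the form
\[
\frac{1}{N}\sum_{b=1}^{N} \mathbf{1}_{F_n}(x)\prod_{i=1}^{n}\mathbf{1}_{E}(T^{b+b_i}x),
\]
and the Host--Kra structure theorem reduces these averages to computations on an appropriate pronilfactor, on which equidistribution results (e.g.\ Leibman's theorem) supply the required positivity. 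The next element $b_{n+1}$ is then chosen from the infinite pool of admissible continuations, and $F_{n+1}$ is updated by an intersection that remains of positive measure.

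The main obstacle is the choice of the shift $t$: it cannot be produced on the fly from the inductive construction, because the relation $B+B+t\subset A$ must be respected uniformly from the very first element of $B$. My approach would be to fix $t$ at the outset by using a conditional measure that records the ``generic'' return behavior of $x_0$ to $E$, so that $T^{-t}E$ has strong recurrence relative to $x_0$, and only then to launch the inductive construction with $T^{-t}E$ in place of $E$. Coordinating this a priori choice of $t$ with the inductive extraction of continuations, while simultaneously maintaining positive measure at every stage, is in my view the deepest step of the argument and the essential innovation of Kra--Moreira--Richter--Robertson.
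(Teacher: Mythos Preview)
Your proposal has a genuine gap, and the route you sketch diverges substantially from the argument the paper actually carries out (for the general $(\ell,m)$ case, which specialises to this statement at $\ell=m=1$).

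First, the averages you write down are not multiple ergodic averages at all: for fixed $x$, $\frac{1}{N}\sum_{b=1}^N \mathbf{1}_{F_n}(x)\prod_{i=1}^n \mathbf{1}_E(T^{b+b_i}x)$ is a \emph{single} Birkhoff average of the function $g=\prod_i T^{b_i}\mathbf{1}_E$, so Host--Kra theory and Leibman's theorem are irrelevant. The characteristic factor for this problem is the Kronecker factor, and the paper works entirely at that level.

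Second, and more seriously, you never explain why the positive-measure witness sets $F_n$ say anything about the \emph{specific} point $x_0$. Birkhoff gives you the limit of your average for $\mu$-almost every $x$, but $x_0$ is a single prescribed point, and the whole difficulty of the problem is that $B$ must be built inside the orbit of $x_0$. Maintaining $\mu(F_n)>0$ does not, by itself, force $x_0$ (or any iterate of it) to lie in $F_n$. Your final paragraph effectively concedes this: you say the coordination of $t$ with the inductive construction is ``the deepest step'' and then stop.

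The paper's approach is structurally different. It introduces the notion of an \emph{Erd\H{o}s progression}: a triple $(x_0,x_1,x_2)$ with $(T\times T)^{n_i}(x_0,x_1)\to(x_1,x_2)$. Proposition~\ref{EP and return times} shows that any such progression with $x_2\in T^{-t}E$ already yields the desired infinite $B$. The work is then to manufacture Erd\H{o}s progressions starting at the fixed generic point $a=x_0$. This is done by building a continuous $(T\times T)$-ergodic decomposition $\lambda_{(x_1,x_2)}$ of $\mu\times\mu$ through the Kronecker disintegration, proving (Lemma~\ref{full measure generic points}) that $(a,x_1)$ is $(T\times T)$-generic for $\lambda_{(a,x_1)}$ for $\mu$-a.e.\ $x_1$, and constructing a measure $\sigma_a$ on $X\times X$ supported on pairs $(x_1,x_2)$ for which $(a,x_1,x_2)$ is an Erd\H{o}s progression. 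The shift $t$ then falls out automatically from the positivity $\sigma_a(X\times T^{-t}E)>0$ for some $t$, which follows from ergodicity. None of this machinery appears in your outline.
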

For completeness, we recall that for a set $A\subset \N$ its upper Banach density, denoted by $\diff^{*}(A)$, is defined as the limit $$\diff^{*}(A)=\limsup_{N-M \to \infty} \frac{|A\cap \{M,M+1,\dots,N\}|}{N-M}.$$
More recently, the same set 
of authors proposed a conjecture (see 
\cite[Conjecture 3.10]{kmrr_survey}) which
generalises \cref{KMRR theorem}. Our first main result 
verifies this conjecture 
and is the following.

\begin{theorem} \label{mB+lB}
For any $A\subset \N$ with positive upper 
Banach density and $\ell,m \in \N$, there exists
some infinite set $B\subset \N$ and a number 
$t\in \N$ such that 
$$\{mb_1 + \ell b_2\colon  b_1,b_2\in B,\ b_1 < b_2\} +t \subset 
A.$$            
\end{theorem}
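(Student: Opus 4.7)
My approach would follow the strategy of Kra--Moreira--Richter--Robertson (KMRR) for the symmetric case $\ell=m=1$, adapted to handle asymmetric coefficients. First, Furstenberg's correspondence principle reduces the statement to the following ergodic assertion: in some invertible measure-preserving system $(X,\mu,T)$ with a distinguished set $E\subset X$ of positive measure, find an infinite set $B=\{b_1<b_2<\cdots\}\subset\N$ and a point $x\in X$ such that $T^{mb_i+\ell b_j}x\in E$ for all $i<j$.

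Setting $S=T^m$ and $R=T^\ell$ (which commute), the target becomes $S^{b_i}R^{b_j}x\in E$ for $i<j$, and I would construct $B$ inductively. Given $b_1<\cdots<b_n$, let
$$G_n=\bigcap_{i=1}^{n}S^{-b_i}E\quad\text{and}\quad F_n=\bigcap_{1\le i<j\le n}T^{-(mb_i+\ell b_j)}E,$$
so that $G_n$ records the constraints a future $b_{n+1}$ must meet through $R$, and positive $\mu(F_n)$ witnesses the pattern up to step $n$. Choosing $b_{n+1}$ amounts to requiring both $F_n\cap R^{-b_{n+1}}G_n$ and $G_n\cap S^{-b_{n+1}}E$ to have positive measure; by the mean ergodic theorem along $R$, a positive-density set of candidates $b_{n+1}$ fulfils the first whenever $\int\mathbf{1}_{F_n}\,\E[\mathbf{1}_{G_n}\mid\I_R]\,d\mu>0$, and similarly for the second via $\E[\mathbf{1}_E\mid\I_S]$. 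Since the sequence $(F_n)$ is decreasing, a point $x\in\bigcap_n F_n$ would complete the proof provided $\mu(F_n)$ is bounded below by a constant independent of $n$.

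The main obstacle is thus the \emph{uniform propagation of positivity}: ensuring that $\inf_n\mu(F_n)>0$, rather than a mere $\mu(F_n)>0$ at each finite stage. As in the symmetric case, I would address this by passing to a sufficiently sated extension of the system, so that the conditional expectations appearing above are compatible with weak-$*$ limits of $(\mathbf{1}_{F_n})$ and $(\mathbf{1}_{G_n})$. The asymmetry $\ell\neq m$ alters which characteristic factors govern the relevant multicorrelations, but since $S$ and $R$ generate a commuting $\Z^2$-action the requisite structure theory (in the spirit of Host--Kra) remains available; the crux of the argument is to establish the quantitative analogue of the KMRR dynamical inequality in this asymmetric setting, which I expect to require careful joint bookkeeping of both the $F_n$- and $G_n$-families along a diagonal extraction through a countable dense set of observables.
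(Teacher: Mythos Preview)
Your plan diverges from the paper's approach and has a genuine gap at precisely the point you flag as the main obstacle. The paper does not attempt an inductive construction with uniform lower bounds on $\mu(F_n)$; instead it introduces $(\ell,m)$-\emph{Erd\H{o}s progressions}: triples $(x_0,x_1,x_2)$ with $(T^\ell\times T^m)^{n_i}(x_0,x_1)\to(x_1,x_2)$ along some sequence $(n_i)$. Once such a progression exists with $x_0=a$ (the specific generic point from the correspondence principle) and $x_2\in T^{-t}E$, a short topological recurrence argument produces $B$ directly, with no need to control $\mu(F_n)$. To obtain these progressions the paper shows, via a one-line van der Corput estimate, that the Kronecker factor alone is characteristic for the averages $\tfrac{1}{N}\sum_n T^{\ell n}f\otimes T^{mn}g$; it then defines a measure $\sigma_a$ on $X\times X$ concentrated on pairs $(x_1,x_2)$ whose Kronecker projections form an $(\ell,m)$-arithmetic progression starting at $\pi(a)$, and proves that $\sigma_a$-almost every such pair yields an Erd\H{o}s progression rooted at $a$.

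Your proposed route is misdirected in two ways. First, $R=T^\ell$ and $S=T^m$ are powers of a single transformation, so no genuine $\Z^2$ Host--Kra theory enters; the Kronecker factor already controls the relevant correlations. Second, and more seriously, the ``uniform propagation of positivity'' you name is exactly the difficulty that the Erd\H{o}s-progression framework of KMRR was designed to circumvent in the symmetric case. Invoking sated extensions does not by itself explain how to keep $\inf_n\mu(F_n)>0$ while simultaneously realising the pattern at the specific point $a$ rather than at some anonymous $x\in\bigcap_n F_n$; your outline produces a point $x$ of positive-measure type, but the correspondence principle requires return times of the fixed generic point $a$, and you give no mechanism linking the two. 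These two demands pull against each other, and it is precisely this tension that the paper's construction of $\sigma_a$ and of generic pairs $(a,x_1)$ for $\lambda_{(a,x_1)}$ resolves.
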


\begin{remark} \label{unrestricted patterns counterexample}
If $\ell=m=1$, Theorem \ref{mB+lB} coincides with \cref{KMRR theorem} and more generally, when $\ell=m$, Theorem \ref{mB+lB} can easily be deduced from \cref{KMRR theorem}. If $\ell \neq m$, shifts of the patterns $\{mb_1 + \ell b_2\colon  b_1,b_2\in B,\ b_1 \neq b_2\}$ can not always be found in sets 
of positive density. In fact, these sumsets are not 
even partition regular as shown in \cite[Example $3.9$]{kmrr_survey}.  
\end{remark}

Our proof of \cref{mB+lB} is ergodic theoretic in nature. The 
main setup for it is laid out in Section \ref{setup} and it 
is completed in Section \ref{main proofs}, along with a slightly stronger result (see \cref{lB in shift}).

For the purposes of contextualizing our next main results, we redirect
our attention to Theorem \ref{KMRR theorem}. In 
particular, we point out that the restriction $b_1\neq b_2$ is 
necessary and it was long known that there exist sets of full upper 
Banach density not containing infinite sumsets 
$\{b_1+b_2: b_1,b_2 \in B\}$ up to shifts. A natural question then is whether one can guarantee such unrestricted sumsets in sets which are large through stronger notions of density. Recall that for a set $A\subset \N$, its natural upper and lower densities, denoted by $\overline{\diff}(A)$ and $\underline{\diff}(A)$, respectively, are defined as the limits 
$$\overline{\diff}(A)=\limsup_{N\to \infty} \frac{|A \cap \{1,\ldots,N\}}{N}\ \text{and}\ \underline{\diff}(A)= \liminf_{N\to \infty} \frac{|A \cap \{1,\ldots,N\}|}{N}.$$
In \cite{kousek_radic2024}, the author and Radi\'c gave a solution to the unrestricted version of this problem for natural upper and lower density, via the following result.

\begin{theorem} \label{KousekRadic theorem}  \cite[Theorems 1.2, 1.3]{kousek_radic2024}
 Let $A \subset \N$. 
    \begin{enumerate}
    \item If $\overline{\diff}(A)>5/6$ or $\underline{\diff}(A)>3/4$, there exists an infinite set $B\subset \N$ such that $B+B \subset A$.
    \item If $\overline{\diff}(A)>2/3$ or $\underline{\diff}(A)>1/2$, there exist an infinite set $B\subset \N$ and $t\in \{0,1\}$ such that $B+B+t \subset A$.
\end{enumerate}    
\end{theorem}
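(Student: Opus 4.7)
I would prove both parts by an inductive greedy construction of $B = \{b_1 < b_2 < \cdots\}$: at step $k+1$, the new element must lie in the "good set" $G_k$ of all $n > b_k$ satisfying the diagonal constraint $2n + t \in A$ and the off-diagonal constraints $n + b_i + t \in A$ for $i \le k$. The core difficulty is showing that $G_k$ remains infinite at every stage.

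I focus on part $(2)$ under $\underline{\diff}(A) > 1/2$. First I would reduce to a fixed shift $t \in \{0,1\}$ by parity pigeonhole: setting $A'_s := \{n \in \N : 2n + s \in A\}$ for $s \in \{0,1\}$, the identity $|A'_0 \cap [1,N]| + |A'_1 \cap [1,N]| = |A \cap [2, 2N+1]|$ combined with $\underline{\diff}(A) > 1/2$ forces $|A'_s \cap [1,N]|/N > 1/2$ for some $s \in \{0,1\}$ along infinitely many $N$. Fixing this $s$ as $t$, the task becomes constructing an infinite $B \subset A'_t$ with $b_i + b_j + t \in A$ for all $i \le j$.

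The crux is that naive inclusion-exclusion gives $\underline{\diff}(G_k) \ge \underline{\diff}(A'_t) - k\bigl(1 - \underline{\diff}(A)\bigr)$, which becomes vacuous for moderate $k$. To bypass this, I would invoke Furstenberg's correspondence principle to pass to a measure-preserving system $(X, T, \mu)$ with $\mu(\tilde A) > 1/2$ and build $B$ from the return times of a generic point. The threshold $\mu(\tilde A) > 1/2$ is critical: it guarantees $\mu(\tilde A \cap T^{-n} \tilde A) \ge 2\mu(\tilde A) - 1 > 0$ uniformly in $n$, giving a positive floor on pairwise intersections that can be leveraged through a compactness argument to extract the infinite $B$. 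The remaining thresholds --- $3/4$ (no-shift lower density), $2/3$ and $5/6$ (upper density) --- would follow by analogous arguments that additionally enforce $2b \in A$ directly when no shift is allowed; this forces $\underline{\diff}$ to climb from $1/2$ to $3/4$, since $\{n : 2n \in A\}$ must remain above the critical $1/2$ threshold and satisfies $\underline{\diff}(\{n : 2n \in A\}) \ge 2\underline{\diff}(A) - 1$.

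The main obstacle throughout is bridging the gap between asymptotic density inequalities for finite intersections and the extraction of an actual infinite combinatorial object; this is exactly where a dynamical or compactness input becomes essential. Optimality of the thresholds would be demonstrated by explicit periodic or Beatty-type constructions exhibiting sets of the relevant density that admit no infinite $B$ with the required sumset property.
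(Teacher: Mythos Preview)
This theorem is not proved in the present paper --- it is quoted from \cite{kousek_radic2024} as background. However, the machinery developed here (and, by the paper's own remarks, the machinery in \cite{kousek_radic2024}) makes clear what an actual proof requires, and your sketch stops short of it at the decisive point.

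The gap is your sentence ``\emph{giving a positive floor on pairwise intersections that can be leveraged through a compactness argument to extract the infinite $B$}.'' A uniform lower bound $\mu(\tilde A \cap T^{-n}\tilde A)\ge 2\mu(\tilde A)-1>0$ controls only \emph{pairwise} intersections; it says nothing about the measure of the \emph{infinite} intersection $\bigcap_{i\le j} T^{-(b_i+b_j)}\tilde A$ you would need in order to locate a single point whose return times furnish $B$. No amount of compactness converts a uniform pairwise bound into a nonempty infinite intersection without additional structural input --- and this is precisely the step where the ergodic-theoretic content lives, not before it. Your parity reduction has a related weakness: you only obtain $|A'_s\cap[1,N]|/N>1/2$ along a subsequence, which is an upper-density statement about $A'_s$, not the lower-density floor you then tacitly use.

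What the actual argument (in \cite{kousek_radic2024}, and in this paper for the $(\ell,m)$ generalization) supplies is: (i) a bespoke correspondence principle producing a measure on a \emph{product} system $(\Sigma\times\Sigma, S^2\times S)$ that simultaneously encodes the diagonal constraint $2b\in A$ and the off-diagonal constraint; (ii) the notion of an Erd\H{o}s progression $(a,x_1,x_2)$, whose existence immediately yields the infinite $B$ via an inductive return-time argument; and (iii) the proof that such progressions exist for $\sigma_a$-almost every $(x_1,x_2)$, which rests on a continuous ergodic decomposition of $\mu\times\mu$ over the Kronecker factor and a Lusin/Lebesgue-density argument showing $(x_1,x_2)\in\supp(\lambda_{(x_1,x_2)})$ almost surely. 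The density thresholds $1/2$, $2/3$, $3/4$, $5/6$ enter only at the very end, as the numerical conditions ensuring $\sigma_a(E\times F)>0$ for the relevant clopen sets. None of this structure --- Kronecker factor, the measure $\sigma_a$, Erd\H{o}s progressions --- appears in your sketch, and it cannot be replaced by a generic compactness appeal.
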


\begin{remark} \label{optimality remark}
It was also shown in \cite{kousek_radic2024} that both of these
results are optimal in the sense that, for example, there exists $A\subset \N$ with $\overline{\diff}(A)=5/6$, such that $B+B\not\subset A$ for any infinite $B\subset \N$.      
\end{remark}

Analogously, we are also interested in an unrestricted 
version of Theorem \ref{mB+lB}. Including 
the diagonal in the sumsets is an important first step in this direction. Building on the ideas involved in the proof 
of 
Theorem \ref{KousekRadic theorem} and our proof of Theorem 
\ref{mB+lB}, we are able to prove the following results.  

\begin{theorem}\label{lower density theorem intro}
Let $\ell,m \in \N$. For any $A\subset \N$ with $\underline{\diff}(A)>1/2$, there exist an infinite set $B\subset \N$ and some $t\in \N$ such that 
$$\{mb_1 + \ell b_2\colon  b_1,b_2\in B\ \text{and}\ b_1 \leq b_2\} + t \subset A.$$
\end{theorem}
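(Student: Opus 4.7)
The plan is to reduce \cref{lower density theorem intro} to a constrained form of \cref{mB+lB}: one that forces the shift into a specific residue class modulo $m+\ell$ and forces $B$ into a prescribed large subset. This mirrors the strategy used in \cite{kousek_radic2024} to deduce \cref{KousekRadic theorem}(2) from \cref{KMRR theorem}, now adapted to the asymmetric pattern.

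\smallskip

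\textbf{Step 1 (Pigeonhole on residues).} For each $t \in \{0, 1, \ldots, m+\ell - 1\}$ set $A^{(t)} := \{n \in \N : (m+\ell)n + t \in A\}$. Since every integer of $[m+\ell,\,(N+1)(m+\ell) - 1]$ is uniquely of the form $(m+\ell) n + t$ with $n \in [1, N]$ and $t \in \{0, \ldots, m+\ell - 1\}$, we obtain
$$\sum_{t=0}^{m+\ell-1} |A^{(t)} \cap [1,N]| = |A \cap [m+\ell,\,(N+1)(m+\ell) - 1]| \geq (m+\ell) N\,(\underline{\diff}(A) - o(1)).$$
Pigeonhole on the finite range $\{0, \ldots, m+\ell - 1\}$ produces some fixed $t_0$ with $\overline{\diff}(A^{(t_0)}) \geq \underline{\diff}(A) > 1/2$. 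It then suffices to find an infinite $B \subset A^{(t_0)}$ satisfying
$$\{mb_1 + \ell b_2 + t_0 : b_1 < b_2,\ b_1, b_2 \in B\} \subset A,$$
since $B \subset A^{(t_0)}$ automatically supplies $(m+\ell)b + t_0 \in A$ for every $b \in B$, yielding the full asymmetric sumset (including the diagonal) shifted by $t_0$ inside $A$.

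\smallskip

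\textbf{Step 2 (Adapted ergodic construction).} To produce such a $B$, I would adapt the ergodic-theoretic proof of \cref{mB+lB} (or its refinement \cref{lB in shift}) from \cref{main proofs} to handle the two additional constraints: the shift is prescribed to be $t_0$ and $B$ must lie in $A^{(t_0)}$. At each stage of the inductive construction $b_1 < b_2 < \ldots$, the set of valid continuations, i.e.\ those $b_{n+1}$ making $mb_i + \ell b_{n+1} + t_0 \in A$ for every $i \leq n$, has upper density strictly greater than $1/2$ along an infinite subsequence of scales on which $\overline{\diff}(A^{(t_0)}) > 1/2$ is simultaneously realised. On those scales the upper densities of the two sets sum to strictly more than $1$, forcing their intersection to contain arbitrarily large elements, which supplies the next $b_{n+1}$.

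\smallskip

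\textbf{Main obstacle.} The principal technical difficulty is twofold: (i) showing that the ergodic construction underlying \cref{mB+lB} can be arranged to return a shift in the prescribed residue class $t_0 \pmod{m+\ell}$, and (ii) synchronising the scales $N$ on which the valid-continuation set has density exceeding $1/2$ with those on which $A^{(t_0)}$ does. Both require a quantitative refinement of the Furstenberg correspondence from \cref{setup} adapted to \emph{lower} density (rather than merely upper Banach density), so that the $> 1/2$ bound inherited from $\underline{\diff}(A)$ is preserved throughout the inductive construction.
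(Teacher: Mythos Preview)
Your Step~1 is correct and useful: pigeonhole does produce a residue $t_0$ with $\overline{\diff}(A^{(t_0)})\geq \underline{\diff}(A)>1/2$, and reducing to finding an infinite $B\subset A^{(t_0)}$ with $\{mb_1+\ell b_2+t_0:b_1<b_2\}\subset A$ is a legitimate reformulation. However, Step~2 is not a proof but a hope, and the sentence ``the set of valid continuations \ldots\ has upper density strictly greater than $1/2$'' is the crux and is unjustified. The valid-continuation set after $n$ steps is $\bigcap_{i\leq n}\bigl((A-t_0-mb_i)/\ell\bigr)$; nothing in \cref{mB+lB} or its proof gives you density control on this intersection, let alone synchronises the scales at which it exceeds $1/2$ with those for $A^{(t_0)}$. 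You have correctly named this as the Main Obstacle, but naming it is not overcoming it, and the ``quantitative refinement of the Furstenberg correspondence'' you allude to is essentially the whole content of the theorem.

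The paper does \emph{not} fix $t_0$ in advance. Instead it builds a two-coordinate correspondence principle (\cref{FCP lower density shift}): one coordinate tracks $A$ under the shift $S$, the other tracks an auxiliary point $a'$ encoding all the dilated slices $(A-j)/(\ell+m)$ simultaneously under $S^{q+1}$. The key inequality there reads
\[
(\ell+m)\mu(\Sigma\times E)+\ell\sum_{j=0}^{q}\mu(S^{-j}E\times\Sigma)\ \geq\ (\ell+m)\,\underline{\diff}(A)+\ell(k+1)\,\overline{\diff}(A)+\ell(q-k),
\]
and since $\ell(k+1)=\ell+m$, the right side exceeds $\ell(q+1)$ precisely when $\underline{\diff}(A)+\overline{\diff}(A)>1$. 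This feeds directly into the dynamical \cref{dynamical mB+lB unrestricted shift}, which searches over \emph{all} residues $j\in\{1,\ldots,\ell+m\}$ at once and returns an $(\ell,m)$-Erd\H{o}s progression landing in $E_j\times F_j$ for \emph{some} $j$. The residue is thus an output of the dynamics, not an input fixed by pigeonhole; this is exactly what dissolves your synchronisation problem.
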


\begin{theorem} \label{mB+lB unrestricted shift}
Let $\ell,m \in \N$ and $k=m/\ell$. For any $A\subset \N$ with $\overline{\diff}(A)>1-1/(k+2)$, there exist an infinite set $B\subset \N$ and some $t\in \N$ such that 
$$\{mb_1 + \ell b_2 : b_1,b_2 \in B\ \text{and}\ b_1 \leq b_2\} + t \subset A.$$
\end{theorem}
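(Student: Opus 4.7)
The plan is to derive the theorem from \cref{mB+lB} by an averaging argument over residue classes modulo $m+\ell$ that puts the new diagonal elements $(m+\ell)b+t$ inside $A$ automatically.

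For each $r\in\{0,1,\dots,m+\ell-1\}$, let $E_r=\{n\in\N:(m+\ell)n+r\in A\}$. A short calculation with upper densities gives
$$\sum_{r=0}^{m+\ell-1}\overline{\diff}(E_r)\geq (m+\ell)\,\overline{\diff}(A),$$
so by pigeonhole there is some $r^*$ with $\overline{\diff}(E_{r^*})\geq\overline{\diff}(A)>(m+\ell)/(m+2\ell)$. The strategy is then to produce an infinite $B\subset E_{r^*}$ together with the shift $t=r^*$ such that $\{mb_1+\ell b_2:b_1<b_2\in B\}+r^*\subset A$; once this is achieved, the inclusion $B\subset E_{r^*}$ forces $(m+\ell)b+r^*\in A$ for every $b\in B$, so the diagonal terms in the desired pattern are taken care of for free.

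To obtain such a $B$, I would use a strengthening of \cref{mB+lB} that adapts its ergodic construction so as to allow two extra features: (i) $B$ can be extracted from a prescribed subset $C\subset\N$ of positive upper Banach density (here $C=E_{r^*}$), and (ii) the affine shift $t$ can be fixed in any prescribed coset modulo $m+\ell$ (here $t=r^*$). Both refinements sit naturally inside the Furstenberg-type framework used in the proof of \cref{mB+lB}: item (i) corresponds to replacing the global return-time set of a generic point with its intersection with $C$, and item (ii) to translating the base point of the correspondence. With such a strengthening in hand, applying it with $C=E_{r^*}$ and the prescribed shift $t=r^*$ produces the required $B$.

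The main obstacle is the quantitative control needed to iterate (i) at every step of the inductive construction of $B$. Each candidate $b_{n+1}$ must lie in a finite intersection of translated return-time sets and in addition in $E_{r^*}$; the density threshold $\overline{\diff}(A)>(m+\ell)/(m+2\ell)$, via the consequent $\overline{\diff}(E_{r^*})>(m+\ell)/(m+2\ell)>1/2$, is precisely what guarantees that this augmented intersection retains positive upper Banach density at every step and hence that the induction can continue indefinitely. Formalising this book-keeping, while preserving the off-diagonal argument from the proof of \cref{mB+lB}, is the technical heart of the proof.
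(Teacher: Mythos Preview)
Your proposal has a genuine gap: the ``strengthening'' of \cref{mB+lB} you invoke is not proved, and the justification you give for it does not work. The inductive construction of $B$ in \cref{EP and return times} is purely topological --- at each stage the set of admissible $b_{n+1}$ consists of those $n$ with $(T^\ell\times T^m)^n(x_0,x_1)$ in a fixed open set, and this set is guaranteed only to be infinite, not to have positive (or $>1/2$) upper Banach density. Hence the bound $\overline{\diff}(E_{r^*})>1/2$ gives you no control over the intersection with $E_{r^*}$; it could be empty at every step. There is also no mechanism in the proof of \cref{mB+lB} that lets you prescribe the shift: the value of $t$ emerges from the dynamics, and fixing $t=r^*$ in advance based on a pigeonhole choice is not justified. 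In effect, the coupling you need between the diagonal condition $(m+\ell)B+r^*\subset A$ and the off-diagonal condition $\{mb_1+\ell b_2:b_1<b_2\}+r^*\subset A$ is precisely the content of the theorem, and your outline assumes it rather than proving it.

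The paper's argument proceeds quite differently. It builds a two-coordinate correspondence principle (Lemma~\ref{FCP shift general k}) on $\Sigma\times\Sigma$ with transformation $S^{q+1}\times S$, where one coordinate tracks $A$ and the other tracks an auxiliary point $a'$ encoding the dilated sets $(A-j)/(\ell+m)$. The density hypothesis $\overline{\diff}(A)>(k+1)/(k+2)$ translates into the measure inequality \eqref{mB+lB shift eq}, and the dynamical \cref{dynamical mB+lB unrestricted shift} then produces an $(\ell,m)$-Erd\H{o}s progression whose second and third coordinates land simultaneously in $E_j\times F_j$ for \emph{some} $j$ --- this $j$ determines the shift a posteriori, not the other way around. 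The key measure-theoretic step (in the proof of \cref{continuous dynamical mB+lB unrestricted shift}) is showing $\sigma_a(E_j\times F_j)>0$ for some $j$ via \cref{projections of sigma}, which is exactly the coupling your proposal is missing.
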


We will in fact show that both the previous bounds are optimal in the sense of Remark \ref{optimality remark}. We also stress that the bounds established in \cref{lower density theorem intro} do not depend on the parameters $m,\ell$, unlike the bounds in \cref{mB+lB unrestricted shift}, which do so implicitly, as they depend on the ratio $m/\ell$. On another note, it is easy to see that the shift $t$ in Theorems \ref{mB+lB}, \ref{lower density theorem intro} and \ref{mB+lB unrestricted shift} can be chosen from $\{0,1,\ldots,\ell+m-1\}$. Indeed, write $t=(\ell+m)j+i$, for some $j\in \N_0$ and $i\in \{0,1,\ldots,\ell+m-1\}$. Then, for example, the inclusion $\{mb_1 + \ell b_2 : b_1,b_2 \in B\ \text{and}\ b_1 \leq b_2\} + t \subset A$ can be rewritten as $\{mb_1 + \ell b_2 : b_1,b_2 \in (B+j)\ \text{and}\ b_1 \leq b_2\} + i \subset A$.   

We also prove similar results for the case of unshifted patterns.

\begin{theorem} \label{mB+lB unrestricted no shift}
Let $\ell,m \in \N$ and $k=m/\ell.$ For any
$A\subset \N$ with $\overline{\diff}(A)>1-1/\left(\ell(k+1)(k+2)\right)$, there 
exists an infinite set $B\subset \N$ such that 
$$\{mb_1 + \ell b_2 : b_1,b_2 \in B\ \text{and}\ b_1 \leq b_2\} \subset A.$$
\end{theorem}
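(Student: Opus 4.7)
The strategy is to reduce the statement to \cref{mB+lB unrestricted shift} by searching for $B$ inside a single residue class modulo $d := m+\ell$. The key observation is that if $b_i = r + dc_i$ with $r \in \{0,1,\ldots,d-1\}$ and $c_i \in \N$, then
$$mb_1 + \ell b_2 = (m+\ell)r + d(mc_1+\ell c_2) = d(r + mc_1 + \ell c_2),$$
so that $mB + \ell B$ lies entirely in $d\N$. Hence, defining $A'' := \{n \in \N : dn \in A\}$, it suffices to find an infinite $C \subset \N$ and some $r \in \{0,1,\ldots,d-1\}$ such that $\{mc_1 + \ell c_2 + r : c_1 \leq c_2 \in C\} \subset A''$, for then $B := \{r + dc : c \in C\}$ satisfies the required inclusion.

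The plan is to apply \cref{mB+lB unrestricted shift} directly to $A''$. A short density computation gives
$$\overline{\diff}(A'') = d \cdot \overline{\diff}(A \cap d\N) = 1 - d \cdot \underline{\diff}(d\N \setminus A).$$
Since $d\N \setminus A \subset \N \setminus A$, we have $\underline{\diff}(d\N \setminus A) \leq \underline{\diff}(\N \setminus A) = 1 - \overline{\diff}(A)$, and by hypothesis this is strictly less than $1/(\ell(k+1)(k+2)) = 1/(d(k+2))$. Consequently $\overline{\diff}(A'') > 1 - 1/(k+2)$, which matches the density threshold in \cref{mB+lB unrestricted shift} exactly. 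That theorem therefore provides an infinite $C \subset \N$ and, using the flexibility for the shift recorded in the paragraph following \cref{mB+lB unrestricted shift}, a number $t \in \{0,1,\ldots,d-1\}$ such that $\{mc_1 + \ell c_2 + t : c_1 \leq c_2 \in C\} \subset A''$.

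To finish, take $r := t$ and $B := \{t + dc : c \in C\}$, which is an infinite subset of $\N$. For $b_1 = t+dc_1$ and $b_2 = t+dc_2$ in $B$ with $b_1 \leq b_2$ (equivalently $c_1 \leq c_2$), the identity displayed above gives $mb_1 + \ell b_2 = d(mc_1 + \ell c_2 + t)$, and the inclusion $mc_1 + \ell c_2 + t \in A''$ then implies $mb_1 + \ell b_2 \in A$. There is no real obstacle once one has the right structural idea: the only substantive point is recognising that $d = m+\ell$ is the correct modulus, because this is precisely what forces $mB + \ell B \subset d\N$ and allows the shift produced by \cref{mB+lB unrestricted shift} to be absorbed into the choice of residue class for $B$. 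The density threshold $1 - 1/(\ell(k+1)(k+2))$ is exactly what is needed to make $A''$ satisfy the hypothesis $\overline{\diff}(A'') > 1 - 1/(k+2)$ after rescaling by $d$.
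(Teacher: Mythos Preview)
Your argument is correct, and it takes a genuinely different route from the paper. The paper proves \cref{mB+lB unrestricted no shift} independently of \cref{mB+lB unrestricted shift}: it sets up a separate correspondence principle (Lemma~\ref{FCP no shift general k}), passes to an ergodic system on $\Sigma\times\Sigma$, and then invokes the dynamical \cref{dynamical mB+lB unrestricted no shift} together with \cref{EP and return times} to extract $B$. In contrast, you observe that restricting $B$ to a single residue class modulo $d=\ell+m$ forces the sumset into $d\N$, so the problem reduces to finding a shifted sumset inside $A''=A/d$; the density calculus $\overline{\diff}(A'')=1-d\cdot\underline{\diff}(d\N\setminus A)>1-1/(k+2)$ then lets you feed $A''$ directly into \cref{mB+lB unrestricted shift}, absorbing the resulting shift $t\in\{0,\dots,d-1\}$ into the residue of $B$. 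Your reduction is strictly more elementary and shows that \cref{mB+lB unrestricted no shift} is in fact a formal consequence of \cref{mB+lB unrestricted shift}, something the paper does not record. The paper's parallel-dynamical treatment, on the other hand, makes the two theorems logically independent and exhibits the common mechanism (the measure $\sigma_a$ on $(\ell,m)$-Erd\H{o}s progressions) behind both.
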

In Section \ref{counterexamples} we prove that Theorems \ref{mB+lB unrestricted shift} and \ref{mB+lB unrestricted no shift} are optimal in the sense of Remark
\ref{optimality remark}. An analogue of \cref{mB+lB unrestricted no shift} with lower density threshold of $\underline{\diff}(A)>1-1/(2(\ell+m))$ is proven in Section \ref{lower density section}, where we also show that this and Theorem \ref{lower density theorem intro} are optimal. For a discussion about other potential unrestricted versions of Theorem \ref{mB+lB} we refer the reader to Section \ref{questions}.

Our proofs of the above results use ergodic theory. 
Expanding on the ideas introduced in \cite{kmrr2}, given a set $A\subset \N$, we first relate the inclusion $\{mb_1 + \ell b_2\colon  b_1,b_2\in B\ \text{and}\ b_1 < b_2\} + t \subset A$ to the existence of a specific system $(X,\mu, T)$, a triple $(a,x_1,x_2) \in X^3$, with predetermined $a\in X$, such that $\left( T^{\ell} \times T^m \right) ^{n_i} \left( a, x_1 \right) \to (x_1,x_2)$, along a sequence $(n_i)_{i \in \N}$. In Section \ref{s dynamics to combinatorics 1.2} we explain how a classical version of Furstenberg's correspondence principle allows us to translate Theorem \ref{mB+lB} to a dynamical statement of the above form, that is, \cref{dynamical mB+lB}. 

The problem gets more complicated -- already at the
level of the correspondence principle -- if we also 
want to include the diagonal in the sumsets. More 
precisely, in the above dynamical setting, in order 
to guarantee that $\{ mb_1 + \ell b_2\colon  b_1,b_2\in B\ \text{and}\ b_1 \leq b_2\}+t \subset A$, 
we additionally need to know that $(T^{j \ell})^{n_i}a \to x_2$, for some $j\in \N$ such that $j\ell=\ell+m$. However, this equation is only solvable if $k=m/\ell$ is an integer. In this special case, one could devise a modified version of the correspondence principle, by building an appropriated 
$(T^{(k+1)}\times T)$-invariant probability measure in 
$(\{0,1\}^{\Z} \times \{0,1\}^{\Z}, T^{(k+1)} \times T)$. For the case $\ell=m=1$, this argument was utilised in \cite{kousek_radic2024}. 

To handle the general case when the ratio $k=m/\ell$ is not necessarily an integer, we consider a 
$(T^{(\lceil k \rceil +1)}\times T)$-invariant probability measure in 
$(\{0,1\}^{\Z} \times \{0,1\}^{\Z}, T^{(\lceil k \rceil +1)} \times T)$, arising from a generic pair of points $(a',a)$, where $a$ corresponds to the indicator of $A$, and $a'$ corresponds to the indicator of an auxiliary set $A'\subset \N$. Essentially, $A'$ is such that the inclusion $(\ell(\lceil k \rceil +1)B+t \subset A'$ also implies 
$(m+\ell)B + t \subset A$. In Section \ref{unrestricted results} we formulate this version of the correspondence principle as Lemma \ref{FCP shift general k} and use it in order to deduce Theorem \ref{mB+lB unrestricted shift} from a dynamical statement, namely Theorem \ref{dynamical mB+lB unrestricted shift}. The proof of Lemma \ref{FCP shift general k}, along with that of another correspondence principle which is used for the case of unshifted patterns $\{ mb_1 + \ell b_2\colon  b_1,b_2\in B\ \text{and}\ b_1 \leq b_2\}$, is given in Section \ref{FCP section for k in Q}. 

One cannot overestimate the influence of the 
pioneering work presented in \cite{kmrr2} on recent results 
pertaining to the ergodic theory approach to infinite 
sumsets. For work related to this interesting and flourishing theory see
\cite{ackelsberg2024counterexamples}, 
\cite{charamaras_mountakis2024}, 
\cite{DNGJLLM}, \cite{host19}, \cite{kmrr1}, \cite{mrr19} (other 
infinite sumset results via different methods can be
found in \cite{Granville},\cite{Maynard},\cite{TaoZieg}). 

Noteworthily, it was privately communicated to the author that
Felipe Hernández has independently found a (different) proof 
of Theorem \ref{mB+lB}, which has not yet been published.

\medskip

\textbf{Acknowledgments.}
The author is thankful to Joel Moreira, Nikos Frantzikinakis, 
Felipe Hernández, Trist\'an Radi\'c, Vicente Saavedra-Araya and Andreas Mountakis for helpful comments on an earlier draft of this article. 

\section{Translation to dynamics}\label{S dynamics to combinatorics}

\subsection{Restricted sumsets} \label{s dynamics to combinatorics 1.2}

For the reader's convenience we recall some 
standard concepts. A \emph{topological system} 
is a 
pair $(X,T)$, where $X$ is a compact metric 
space and $T\colon X \to X$ a homeomorphism. 
Whenever there is a $T$-invariant Borel 
probability measure $\mu$ on $X$, we call 
$(X,\mu,T)$ a \emph{measure preserving system}.  

The system $(X,\mu,T)$ is \emph{ergodic} if the only 
$T$-invariant sets have either measure $0$ or $1$. We 
denote the \emph{support} of the measure $\mu$, 
which is defined as the smallest closed subset of 
$X$ with full measure, by $\supp(\mu)$. 

Given a measure preserving system $\xmt$, a function $f\in L^2(X)$ is called weak-mixing if 
$$\lim_{N\to \infty} \frac{1}{N} \sum_{n=1}^N \left| \int_X T^nf \cdot \bar{f}\ d\mu \right| =0.$$

A \emph{F\o lner sequence} $\Phi$ in $\N$ is a sequence of (non-empty) finite sets $N \mapsto \Phi_N \subset \N$, $N\in \N$ such that
$$\lim_{N\to \infty} \frac{\left| \Phi_N \cap \left(t+\Phi_N\right) \right|}{|\Phi_N|}=1,$$
for any $t\in \N$. Given a system $(X,\mu,T)$, a point $a\in X$ is $T$-\emph{generic} for $\mu$ 
along a F\o lner sequence $\Phi$, written as $a\in \textbf{gen}(\mu, T, \Phi)$, if 
$$\mu = \lim_{N\to \infty} \frac{1}{|\Phi_N|} \sum_{n\in \Phi_N } \delta_{T^n a},$$
where $\delta_x$ is the Dirac mass at $x\in X$ and the limit is in the weak* topology. 

We next define the concept of dynamical 
progressions -- which parallels that of Erd\H{o}s 
progressions introduced in \cite{kmrr2} -- which as 
we shall see are connected to the combinatorial 
patterns that we are looking for in Theorem \ref{mB+lB}.

\begin{definition} \label{EP def}
Given a topological system $(X,T)$ and natural numbers $\ell,m$, we say that a point $(x_0,x_1,x_3)\in X^3$ is an $(\ell,m)$-Erd\H{o}s progression if there exists a sequence $n_1<n_2<\cdots$ of integers such that $(T^{\ell} \times T^m)^{n_i}(x_0,x_1) \xrightarrow{} (x_1,x_2)$ as $i\to \infty$.    
\end{definition}

Our first main dynamical result, the one behind \cref{mB+lB}, is the 
following theorem. 

\begin{theorem}\label{dynamical mB+lB}
Let $(X,\mu,T)$ be an ergodic system, let $a\in \textbf{gen}(\mu,T,\Phi)$ for some F\o lner 
sequence $\Phi$ and $E \subset X$ be an open set with $\mu(E)>0$. Then, there exist $x_1,x_2 \in X$ and $t\in \N$ so that $(a,x_1,x_2) \in X^3$ is an $(\ell,m)$-Erd\H{o}s progression and $T^tx_2 \in E$.    
\end{theorem}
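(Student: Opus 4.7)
The plan is to realize $(x_1, x_2)$ as a limit point of the orbit of $(a, a)$ under $T^\ell \times T^{\ell+m}$, and then to argue that this limit also yields an $(\ell, m)$-Erd\H{o}s progression (with respect to $T^\ell \times T^m$ starting from $(a, x_1)$) with endpoint in $T^{-t}E$ for some $t$. The key technical difficulty will be a coordination step between the two asymmetric iterates $T^{\ell n}$ and $T^{m n}$.

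First I would pass to a subsequence of $\Phi$ along which the empirical measures
\[
\sigma_N := \frac{1}{|\Phi_N|} \sum_{n \in \Phi_N} \delta_{T^{\ell n} a} \otimes \delta_{T^{(\ell+m) n} a}
\]
on $X^2$ converge weakly-$*$ to some probability measure $\sigma$. Then $\sigma$ is invariant under $T^\ell \times T^{\ell+m}$, and by the genericity of $a$ and the ergodicity of $(X,\mu,T)$, its two marginals are $T^\ell$-invariant and $T^{\ell+m}$-invariant measures tied to the ergodic decompositions of $\mu$ under these powers.

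Next, since $\mu(E) > 0$ and $\mu$ is ergodic, the open set $F := \bigcup_{t \geq 1} T^{-t} E$ has full $\mu$-measure, so it is charged by the second marginal of $\sigma$. I would pick a $\sigma$-generic pair $(x_1, x_2) \in \supp(\sigma)$ with $x_2 \in F$; after a further diagonalization this yields a sequence $(n_i) \to \infty$ along $\Phi$ with $T^{\ell n_i} a \to x_1$ and $T^{(\ell+m) n_i} a \to x_2$. Choosing $t$ with $T^t x_2 \in E$ resolves the $E$-condition.

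The main obstacle I foresee is the coordination step: upgrading the convergence $T^{(\ell+m) n_i} a = T^{m n_i}(T^{\ell n_i} a) \to x_2$ into $T^{m n_i} x_1 \to x_2$. This does \emph{not} follow directly, because $T^{m n_i}$ is not uniformly continuous in $n_i$ and the small error $d(T^{\ell n_i} a,\, x_1) \to 0$ can be amplified unpredictably under $T^{m n_i}$. To bridge this gap I expect an additional structural ingredient is needed: for instance, a passage to a uniquely ergodic topological model via the Jewett--Krieger theorem (so that every orbit equidistributes and $x_1$ can be chosen $T^m$-generic for the relevant invariant measure), combined with a careful diagonalization refining $(n_i)$ so that $T^{m n_i} x_1$ converges, and a rigidity property of the joining $\sigma$ ensuring that this new limit coincides with $x_2$. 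This final identification is where the heart of the proof should lie.
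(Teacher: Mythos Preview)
You have correctly identified the crux of the argument: the ``coordination step'' from $T^{(\ell+m)n_i}a \to x_2$ to $T^{m n_i}x_1 \to x_2$ is exactly where the content lies, and your empirical-measure construction alone does not supply it. However, your proposed remedies do not close the gap. Jewett--Krieger is not available here because the point $a$ is \emph{prescribed}; passing to a new topological model destroys the hypothesis $a\in\textbf{gen}(\mu,T,\Phi)$. Likewise, ``rigidity of the joining $\sigma$'' is not a property one can invoke without first proving it, and a bare diagonalization cannot force the limit of $T^{mn_i}x_1$ to agree with $x_2$ absent some structural link between the two coordinates.

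The paper's resolution is to make that structural link explicit via the Kronecker factor. After reducing to the case of a continuous factor map $\pi\colon X\to Z$, one defines the measure $\sigma_a=\int_Z \eta_{\ell z+\pi(a)}\times\eta_{(\ell+m)z+\pi(a)}\,d\nu(z)$ (a specific measure, not just a subsequential limit of your $\sigma_N$) and the continuous $(T^\ell\times T^m)$-ergodic decomposition $\lambda_{(x_1,x_2)}=\int_Z \eta_{\ell z+\pi(x_1)}\times\eta_{mz+\pi(x_2)}\,d\nu(z)$ of $\mu\times\mu$. The coordination step then splits into three pieces: (i) for $\sigma_a$-a.e.\ $(x_1,x_2)$ one has $\lambda_{(a,x_1)}=\lambda_{(x_1,x_2)}$, because $\sigma_a$ is supported on pairs whose Kronecker projections form a $3$-term $(\ell,m)$-progression with $\pi(a)$; (ii) $\sigma_a$-a.e.\ $(x_1,x_2)$ lies in $\supp(\lambda_{(x_1,x_2)})$, via a Lebesgue-density argument on $Z$; and (iii) for $\mu$-a.e.\ $x_1$ the pair $(a,x_1)$ is $(T^\ell\times T^m)$-generic for $\lambda_{(a,x_1)}$, proved by transferring genericity from a nearby point $b\in\overline{O_{T^\ell}(a)}\cap\supp(\mu)$. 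Combining these, $(a,x_1)$ equidistributes for a measure whose support contains $(x_1,x_2)$, which is exactly the $(\ell,m)$-Erd\H{o}s progression property. In short, the ``rigidity'' you were looking for is the algebraic identity on the Kronecker, and this is the ingredient your outline is missing.
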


It turns out that Theorems \ref{mB+lB} and 
\ref{dynamical mB+lB} are actually equivalent, 
but we shall only deal with the required direction here. 
For this we need the following -- parallel of \cite[Theorem $2.2$]{kmrr2} -- result.

\begin{proposition}\label{EP and return times}
Fix a topological system $(X,T)$ and open sets $U,V \subset X$. If there exists an $(\ell,m)$-Erd\H{o}s progression $(x_0,x_1,x_2)\in X^3$ with $x_1\in U$ and $x_2 \in V$, then there exists some infinite set $B\subset \{n\in \N\colon T^{\ell n}x_0 \in U\}$ such that $\{mb_1+\ell b_2\colon b_1,b_2\in B,\ b_1<b_2\} \subset \{n\in \N\colon T^nx_0 \in V\}$.    
\end{proposition}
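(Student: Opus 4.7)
The plan is to construct $B=\{b_1<b_2<\cdots\}$ inductively as a subsequence of the sequence $(n_i)$ that witnesses the Erd\H{o}s progression. By \cref{EP def} we have $n_1<n_2<\cdots$ in $\N$ with $T^{\ell n_i}x_0 \to x_1$ and $T^{m n_i}x_1 \to x_2$. Since $U$ and $V$ are open with $x_1\in U$ and $x_2\in V$, after passing to a tail of $(n_i)$ I may assume that $T^{\ell n_i}x_0\in U$ and $T^{m n_i}x_1\in V$ for every $i\ge 1$. This latter condition is crucial and should be imposed from the start, since during the induction the terms $T^{m b_j}x_1$ will play the role of the targets for the later limits.

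Now I pick the $b_j$'s one at a time. Set $b_1:=n_1$; by construction $T^{\ell b_1}x_0\in U$. Assume $b_1<\cdots<b_k$ have been extracted from $(n_i)$; each satisfies $T^{\ell b_j}x_0\in U$ and $T^{m b_j}x_1\in V$. I want to choose $b_{k+1}=n_i$ for some $i$ large enough that $b_{k+1}>b_k$ and that, for every $j\in\{1,\dots,k\}$,
\[
T^{m b_j+\ell b_{k+1}}x_0 \;=\; T^{m b_j}\bigl(T^{\ell b_{k+1}}x_0\bigr)\;\in\; V.
\]
The continuity of the finitely many homeomorphisms $T^{m b_1},\dots,T^{m b_k}$ combined with $T^{\ell n_i}x_0\to x_1$ gives
\[
T^{m b_j}\bigl(T^{\ell n_i}x_0\bigr)\;\xrightarrow[i\to\infty]{}\;T^{m b_j}x_1\;\in\; V
\]
for each fixed $j\le k$. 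Since $V$ is open and there are only $k$ such conditions, a sufficiently large index $i_{k+1}$ can be picked for which all of them hold simultaneously; setting $b_{k+1}:=n_{i_{k+1}}$ completes the inductive step.

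The resulting infinite set $B$ lies inside $\{n\in\N\colon T^{\ell n}x_0\in U\}$, and by construction $\{mb_j+\ell b_k\colon j<k\}\subset\{n\in\N\colon T^n x_0\in V\}$, as required. There is no serious obstacle here: the only delicate point is appreciating that the "left" coordinates $mb_j$ of the sumset elements $m b_j+\ell b_{k+1}$ become, in the limit $b_{k+1}\to\infty$, pure iterates $T^{m b_j}x_1$ of $x_1$, so one must ensure in advance that every chosen $b_j$ makes $T^{m b_j}x_1$ land in $V$; this is precisely why we restrict to the tail of $(n_i)$ where $T^{m n_i}x_1\in V$ before starting the induction.
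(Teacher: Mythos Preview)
Your proof is correct and follows essentially the same inductive construction as the paper: extract $B$ from the witnessing sequence $(n_i)$, maintaining at each step that $T^{\ell b_j}x_0\in U$, $T^{m b_j}x_1\in V$, and $T^{m b_i+\ell b_j}x_0\in V$ for all $i<j$. The only cosmetic difference is that the paper carries the condition $T^{m b_j}x_1\in V$ as part of the inductive hypothesis, whereas you secure it once and for all by passing to a tail of $(n_i)$ before starting.
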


\begin{proof}
By the definition of $(\ell,m)$-\Erdos\ progressions 
we can find a strictly 
increasing sequence $(c_n)_{n\in \N}$ such that $(T^{\ell} \times T^m)^{c_n}(x_0,x_1) \xrightarrow{} (x_1,x_2)$ as $n\to \infty$ and each 
$c_n$ is such that $T^{\ell c_n}x_0 \in U$. We
will construct $B \subset \{c_n: n\in \N\}$ 
inductively and, the basis of the induction being 
simple, we only prove the inductive step. Suppose 
$b_1<\dots < b_n$ have been chosen so that 
$$x_0 \in \bigcap_{1\leq i < j \leq n} T^{-mb_i -\ell b_j}V\ \text{and}\ x_1\in \bigcap_{1\leq i \leq n} T^{-m b_i}V.$$
Then, we can choose $b_{n+1} \in \{c_k: k \in \N\}$ with $b_{n+1}>b_n$ and such that 
$$(T^{\ell} \times T^m)^{b_{n+1}}(x_0,x_1) \in \left( \bigcap_{1\leq i \leq n} T^{-m b_i}V \right) \times V.$$
It follows that 
$$x_0 \in \bigcap_{1\leq i < j \leq n+1} T^{-mb_i -\ell b_j}V\ \text{and}\ x_1\in \bigcap_{1\leq i \leq n+1} T^{-m b_i}V$$
and this concludes the induction. We finish the 
proof by letting $B=\{b_n: n\in \N\}$.
\end{proof}

To prove that \cref{dynamical mB+lB} implies \cref{mB+lB} we shall use the following classical version of Furstenberg's correspondence principle. 

\begin{lemma}\label{FCP}\cite[Theorem 2.10]{kmrr1}
For a set $A\subset \N$ with $\diff^{*}(A)>0$ 
there exists an ergodic system $\xmt$, a \Folner\ 
sequence $\Phi$, a point $a\in \gen(\mu,T,\Phi)$ and a 
clopen set $E\subset X$ such that $\mu(E)>0$ and $A=\{n\in \N: T^na \in E\}$.
\end{lemma}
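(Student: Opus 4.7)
The plan is to use Furstenberg's symbolic encoding. Set $X := \{0,1\}^{\Z}$ with the product topology (so $X$ is a compact metric space), let $T$ be the left shift, and take $a \in X$ to be the sequence with $a(n) = \mathbf{1}_A(n)$ for $n \in \N$ and $a(n) = 0$ for $n \leq 0$. Let $E := \{x \in X : x(0) = 1\}$, which is a clopen cylinder. Then the identity $A = \{n \in \N : T^n a \in E\}$ holds tautologically, so the entire content of the lemma is producing a $T$-invariant ergodic measure $\mu$ with $\mu(E) > 0$ for which $a$ is generic along some F\o lner sequence.

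The next step is to build a $T$-invariant measure by Krylov--Bogolyubov averaging. By the definition of upper Banach density, pick a F\o lner sequence $\Phi' = (\Phi'_N)$ in $\N$ along which $|A \cap \Phi'_N|/|\Phi'_N| \to \diff^{*}(A)$. Form the empirical measures $\mu_N := |\Phi'_N|^{-1}\sum_{n \in \Phi'_N}\delta_{T^n a}$ and, by weak* compactness of the space of Borel probability measures on the compact metric space $X$, pass to a convergent subsequence $\mu_{N_k} \to \mu$. The F\o lner property of $\Phi'$ forces $\mu$ to be $T$-invariant in the standard way, and clopenness of $E$ gives $\mu(E) = \lim_k \mu_{N_k}(E) = \diff^{*}(A) > 0$. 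Setting $\Phi := (\Phi'_{N_k})_k$, we have $a \in \gen(\mu, T, \Phi)$.

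The main obstacle is that the measure $\mu$ obtained above need not be ergodic. To resolve this, take the ergodic decomposition $\mu = \int \mu_y \, d\mu(y)$ and observe that $\int \mu_y(E) \, d\mu(y) = \mu(E) > 0$, so one can pick an ergodic $\nu := \mu_{y_0}$ with $\nu(E) > 0$. It remains to arrange for $a$ to be generic for $\nu$ rather than for the mixture $\mu$ along some F\o lner sequence $\Phi^{\sharp}$; this is accomplished by a further refinement of $\Phi$ that selects blocks along which the empirical behaviour of the orbit of $a$ approximates $\nu$, and is the technical heart of the argument. Once this refinement is in hand, the tuple $(X, \nu, T, \Phi^{\sharp}, a, E)$ verifies the conclusion of the lemma; the identity $A = \{n \in \N : T^n a \in E\}$ persists because the symbolic embedding was unchanged throughout.
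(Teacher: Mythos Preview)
The paper does not prove this lemma; it is quoted verbatim from \cite[Theorem 2.10]{kmrr1} and used as a black box. So there is no in-paper proof to compare against directly. That said, your outline is the standard one, and the paper itself uses exactly this template in the closely analogous situation at the end of the proof of Lemma~2.9 in Section~\ref{FCP section for k in Q}: symbolic encoding, Krylov--Bogolyubov averaging along a density-realising F\o lner sequence, passage to an ergodic component, and then genericity of the base point for that component along a new F\o lner sequence.

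Your write-up, however, has a genuine gap at precisely the step you yourself flag as ``the technical heart'': you assert that one can refine $\Phi$ to a F\o lner sequence $\Phi^{\sharp}$ along which $a$ is generic for the ergodic component $\nu$, but you give no indication of how. This is the only nontrivial part of the argument, and without it nothing has been proved. The missing ingredient is the observation that $\nu$ can be chosen to be supported on the orbit closure $\overline{\{T^n a : n \in \N\}}$ (this holds for $\mu$ by construction, and hence for $\mu$-almost every ergodic component; intersect with the positive-measure set of components satisfying $\nu(E)>0$). Once $\supp(\nu)$ lies in the orbit closure of $a$, a standard argument---the one the paper invokes as \cite[Proposition 3.9]{Fur1}---produces $\Phi^{\sharp}$: fix a countable dense set $\{f_k\}\subset C(X)$, for each $N$ choose a $\nu$-generic point $b_N\in\supp(\nu)$, approximate $b_N$ by some $T^{m_N}a$ well enough that the averages of $f_1,\ldots,f_N$ over $\{m_N+1,\ldots,m_N+N\}$ are within $1/N$ of their $\nu$-integrals, and set $\Phi^{\sharp}_N=\{m_N+1,\ldots,m_N+N\}$. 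You should either supply this argument or at least record the support observation together with the citation, rather than leaving the crux of the proof as an unexplained refinement.
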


\begin{proof}[Proof that \cref{dynamical mB+lB} 
implies \cref{mB+lB}]
Let $A \subset \N$ with $\diff^{*}(A)>0$ and 
$\xmt$, $a\in X$, $\Phi$ and $E\subset X$ be 
those 
arising from Lemma \ref{FCP}. By Theorem \ref{dynamical mB+lB}, there exists a $t\in \N$ and an $(\ell, m)$-Erd\H{o}s progression $(a,x_1,x_2) \in \{a\} \times X \times T^{-t}E$. Invoking Proposition \ref{EP and return times} we obtain an infinite set $B\subset \N$, such that $\{mb_1+\ell b_2\colon b_1,b_2\in B,\ b_1<b_2\} \subset \{n\in \N\colon T^na \in T^{-t}E\}$. Since $A=\{n\in \N: T^na \in E\}$, we see that $A-t=\{n\in \N\colon T^na \in T^{-t}E\}$, so the theorem follows.
\end{proof}

Apropos of this discussion, we address the necessity of the shift 
in Theorem \ref{mB+lB} and also the density threshold for the 
unshifted version. This is merely an observation, but for the 
reader's convenience we prove it in the next proposition. 

\begin{proposition} \label{mB+lB no shift restricted}
Let $\ell,m\in \N$. If $A\subset \N$ with $\diff^{*}(A)>1-\frac{1}{(\ell+m)}$, then there is an infinite set $B\subset \N$ such that
$$\{mb_1 + \ell b_2\colon  b_1,b_2\in B,\ b_1 < b_2\} \subset 
A.$$
Otherwise, the shift in Theorem \ref{mB+lB} is in general 
necessary. 
\end{proposition}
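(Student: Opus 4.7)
For the positive direction, my plan is to apply \cref{mB+lB} not to $A$ itself but to the auxiliary set
$$A' := \bigcap_{i=0}^{\ell+m-1}(A+i),$$
whose structure lets us absorb the shift supplied by that theorem. First I would verify $\diff^{*}(A') > 0$. Picking a sequence of intervals $I_k = [M_k, N_k]$ with $N_k - M_k \to \infty$ along which $|A \cap I_k|/(N_k - M_k) \to \diff^{*}(A)$, I observe that for each fixed $i \in \{0, \ldots, \ell+m-1\}$ the sets $(A+i) \cap I_k$ and $A \cap I_k$ differ by $O(i)$ elements, so the density of $A+i$ along $I_k$ also tends to $\diff^{*}(A)$. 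Subadditivity of the complement then yields
$$\diff^{*}(A') \;\geq\; 1 - (\ell+m)\bigl(1 - \diff^{*}(A)\bigr) \;>\; 0$$
under the hypothesis $\diff^{*}(A) > 1 - 1/(\ell+m)$.

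With $\diff^{*}(A') > 0$ in hand, I would apply \cref{mB+lB} to $A'$ to obtain an infinite $B \subset \N$ and some $t \in \N$ with $\{mb_1 + \ell b_2 : b_1, b_2 \in B,\ b_1 < b_2\} + t \subset A'$. By the remark in the paragraph following \cref{mB+lB unrestricted shift}, after replacing $B$ by a translate I may assume $t \in \{0, 1, \ldots, \ell+m-1\}$. Since $A' \subset A + t$ by construction, the inclusion becomes $\{mb_1 + \ell b_2 : b_1, b_2 \in B,\ b_1 < b_2\} + t \subset A + t$, equivalently $\{mb_1 + \ell b_2 : b_1, b_2 \in B,\ b_1 < b_2\} \subset A$, completing the first part.

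For the second assertion, I would exhibit the arithmetic obstruction $A = \N \setminus (\ell+m)\Z$, which has $\diff^{*}(A) = 1 - 1/(\ell+m)$. For any infinite $B \subset \N$ the pigeonhole principle produces $b_1 < b_2$ in $B$ with $b_1 \equiv b_2 \pmod{\ell+m}$, whence $mb_1 + \ell b_2 \equiv (m+\ell) b_1 \equiv 0 \pmod{\ell+m}$, placing $mb_1 + \ell b_2$ outside $A$. Hence no infinite $B$ realizes $\{mb_1 + \ell b_2 : b_1, b_2 \in B,\ b_1 < b_2\} \subset A$, so the shift in \cref{mB+lB} cannot in general be dropped at this density threshold. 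I do not foresee any step being a serious obstacle; the substantive content lies in isolating the right auxiliary set $A'$ and tracking the shift carefully through the application of \cref{mB+lB}.
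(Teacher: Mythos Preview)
Your argument is correct. The counterexample for the second assertion is exactly the paper's, but your positive direction takes a genuinely different route. The paper applies \cref{mB+lB} to the auxiliary set $A\cap(\ell+m)\N$, which has positive upper Banach density once $\diff^{*}(A)>1-1/(\ell+m)$; after obtaining a shifted sumset inside $A\cap(\ell+m)\N$, one passes to an infinite $B'\subset B$ whose elements lie in a single residue class modulo $\ell+m$, which forces every $mb_1+\ell b_2$ to be divisible by $\ell+m$ and hence forces the shift $t$ to be a multiple of $\ell+m$, so it can be absorbed by translating $B'$. Your choice $A'=\bigcap_{i=0}^{\ell+m-1}(A+i)$ instead builds a set that already contains $A+t$ for every $t\in\{0,\ldots,\ell+m-1\}$, so the shift (once reduced to that range) is absorbed without any further pigeonholing on $B$. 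Both tricks are equally elementary; the paper's is closer in spirit to the modular arguments used later (e.g.\ in \cref{counterexamples upper density}), while yours avoids refining $B$ a second time and makes the density bookkeeping slightly more transparent.
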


\begin{proof}
Observe that the set $A=\N \setminus (\ell+m)\N$ has natural 
density $1-\frac{1}{(\ell+m)}$ and contains no infinite sumset of 
the form 
$\{mb_1 + \ell b_2\colon  b_1,b_2\in B,\ b_1 < b_2\}$. Indeed, the infinity of $B$ allows us 
to choose an infinite subset of it, say $B'\subset B$, all the 
elements of which are equal modulo $(\ell+m)$. That is, there is 
some $j\in \{0,1,\ldots,\ell+m-1\}$ so that any $b\in B'$ is of 
the form $b=(\ell+m)n+j$, some $n\in \N$. It follows that $mb_1+\ell b_2 \in (\ell+m)\N$, for any $b_1,b_2\in B'$. This 
means that the shift above is necessary and the density threshold 
cannot be improved. 

On the other hand, if $A\subset \N$ with $\diff^{*}(A)>1-\frac{1}{(\ell+m)}$, it is easy to see that $\diff^{*}(A\cap (\ell+m)\N)>0$ and then by \cref{mB+lB} there is some infinite $B\subset \N$ such that 
$$\{mb_1 + \ell b_2\colon  b_1,b_2\in B,\ b_1 < b_2\} \subset A\cap (\ell+m)\N \subset A.$$ 
Again, for the latter we implicitly used the fact that infinity 
of $B$ allows us 
to choose an infinite subset of it, all the elements of 
which are equal modulo $(\ell+m)$.   
\end{proof}

\subsection{Lifting restrictions} \label{unrestricted results}

In order to prove the combinatorial results 
in Theorems \ref{mB+lB unrestricted shift} and \ref{mB+lB unrestricted no shift} we will use the following dynamical results respectively.

\begin{theorem}\label{dynamical mB+lB unrestricted shift}
Let $(X,\mu,T)$ be an ergodic system and $a\in \textbf{gen}(\mu,T,\Phi)$ for some F\o lner 
sequence $\Phi$. Moreover, let $\ell,m\in \N$, $q=\lceil m/\ell \rceil$ and assume that 
$E_1,\ldots,E_{\ell+m},F_1,\ldots,F_{\ell+m}\subset X$ are open sets 
such that $F_j=T^{-(j-1)}F_1$, $j=1,\ldots,\ell+m$, 
$E_{i+(q+1)}=T^{-1}E_i$, for $i=1,\ldots,\ell+m-q-1$, and also  
\begin{equation} \label{mB+lB shift eq}
(\ell+m)\mu(F_1)+\ell \left(\mu(E_1)+\cdots + \mu(E_{q+1}) \right)>\ell(q+1).     
\end{equation}
Then, for some $j\in \{1,\ldots,\ell+m\}$, there exist $x_1,x_2 \in X$ so that $(a,x_1,x_2) \in X^3$ is an $(\ell,m)$-Erd\H{o}s progression 
and $(x_1,x_2) \in E_j \times F_j$.   
\end{theorem}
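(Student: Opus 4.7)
The plan is to adapt the strategy of \cref{dynamical mB+lB}, adding a pigeonhole step over the index $j$ to force $(x_1,x_2)$ into some $E_j\times F_j$. Let $\lambda$ be a weak-$*$ subsequential limit of the empirical measures
\[
\lambda_N \;=\; \frac{1}{|\Phi_N|}\sum_{n\in\Phi_N}\delta_{(T^{\ell n}a,\,T^{(\ell+m)n}a)}
\]
on $X\times X$. Using the genericity of $a$ along $\Phi$ and passing to a subsequence of $\Phi$ if needed, $\lambda$ is $T^{\ell}\times T^{\ell+m}$-invariant and both of its marginals coincide with $\mu$. The construction used in the proof of \cref{dynamical mB+lB} then ensures that for any $(x_1,x_2)\in\supp(\lambda)$, there is a sequence $n_i\to\infty$ along which $T^{\ell n_i}a\to x_1$ and $T^{m n_i}x_1\to x_2$, so that $(a,x_1,x_2)$ is an $(\ell,m)$-\Erdos\ progression.

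Next I would show that
\[
\lambda\!\left(\bigcup_{j=1}^{q+1} E_j\times F_j\right) > 0.
\]
Assume for contradiction that $\lambda(E_j\times F_j)=0$ for every $j\in\{1,\dots,q+1\}$. Because the marginals of $\lambda$ are both $\mu$ and $\mu(F_j)=\mu(F_1)$ by $T$-invariance of $\mu$, this forces $\mu(E_j)\leq 1-\mu(F_1)$ for each such $j$. Summing from $j=1$ to $q+1$ and multiplying by $\ell$ gives $\ell\sum_{j=1}^{q+1}\mu(E_j)\leq\ell(q+1)(1-\mu(F_1))$, hence
\[
(\ell+m)\mu(F_1)+\ell\sum_{j=1}^{q+1}\mu(E_j) \;\leq\; \ell(q+1)+\bigl((\ell+m)-\ell(q+1)\bigr)\mu(F_1) \;\leq\; \ell(q+1),
\]
using $\ell q\geq m$. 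This contradicts \eqref{mB+lB shift eq}, so some $j\in\{1,\dots,q+1\}$ satisfies $\lambda(E_j\times F_j)>0$; combined with the first step, any $(x_1,x_2)\in\supp(\lambda)\cap(E_j\times F_j)$ extends to the desired $(\ell,m)$-\Erdos\ progression.

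I expect the main obstacle to be the first step: namely, establishing that support points of $\lambda$ automatically extend to $(\ell,m)$-\Erdos\ progressions. The convergences $T^{\ell n_i}a\to x_1$ and $T^{(\ell+m)n_i}a\to x_2$ do not yield $T^{m n_i}x_1\to x_2$ by compactness of $X$ alone, and some non-trivial joining or Host--Kra-type factor machinery is required; this is the technical heart of the proof of \cref{dynamical mB+lB} and should transfer verbatim. By contrast, the relations $E_{i+q+1}=T^{-1}E_i$ do not enter the pigeonhole at all, and are instead used at the level of the correspondence principle (Lemma \ref{FCP shift general k}) to translate the combinatorial target into the present dynamical form.
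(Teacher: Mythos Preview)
Your pigeonhole step rests on the claim that both marginals of $\lambda$ equal $\mu$, and this is false. Genericity $a\in\gen(\mu,T,\Phi)$ only controls averages of $\delta_{T^na}$; it says nothing about the averages $\frac{1}{|\Phi_N|}\sum_{n\in\Phi_N}\delta_{T^{\ell n}a}$ or $\frac{1}{|\Phi_N|}\sum_{n\in\Phi_N}\delta_{T^{(\ell+m)n}a}$. A two-point rotation already breaks your claim: if $X=\Z/2\Z$, $T$ is the shift and $\mu$ is uniform, then $a=0$ is $T$-generic for $\mu$ but the $T^2$-empirical measure is $\delta_0$. In general the marginals of the relevant measure only \emph{average} to $\mu$: in the paper's notation (\cref{projections of sigma}), $\frac{1}{\ell}\sum_{i=0}^{\ell-1}T^i\pi_1\sigma_a=\mu$ and $\frac{1}{\ell+m}\sum_{i=0}^{\ell+m-1}T^i\pi_2\sigma_a=\mu$. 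Once the marginals are not $\mu$, the implication ``$\lambda(E_j\times F_j)=0\Rightarrow\mu(E_j)+\mu(F_1)\le1$'' collapses, and with it your contradiction of \eqref{mB+lB shift eq}.

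This is also why the structural hypotheses $F_j=T^{-(j-1)}F_1$ and $E_{i+(q+1)}=T^{-1}E_i$ are not cosmetic leftovers from the correspondence principle but essential to the pigeonhole itself. The paper runs the contradiction over \emph{all} indices $j\in\{2,\dots,\ell+m\}$ (not just $j\le q+1$), obtaining $\sum_{j=2}^{\ell+m}\pi_1\sigma_a(E_j)+\pi_2\sigma_a(F_j)\le\ell+m-1$; the relation $F_j=T^{-(j-1)}F_1$ then converts $\sum_j\pi_2\sigma_a(F_j)$ into $(\ell+m)\mu(F_1)-\pi_2\sigma_a(F_1)$ via \cref{projections of sigma}, and the relation on the $E_i$'s converts $\sum_{j=1}^{\ell+m}\pi_1\sigma_a(E_j)$ (after padding with auxiliary sets up to index $\ell(q+1)$) into $\ell\sum_{j=1}^{q+1}\mu(E_j)$ minus a term bounded by $\ell(q-k)$. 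Only after these conversions does \eqref{mB+lB shift eq} enter. Your first step (support points of the limiting measure give $(\ell,m)$-\Erdos\ progressions) does indeed require the Kronecker machinery of Section~\ref{setup}, as you anticipated; but the second step needs it just as much.
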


\begin{theorem}\label{dynamical mB+lB unrestricted no shift}
Let $(X,\mu,T)$ be an ergodic system, let $a\in \textbf{gen}(\mu,T,\Phi)$ for some F\o lner 
sequence $\Phi$ and $E,F \subset X$ be an open sets with
\begin{equation}\label{mB+lB no shift eq}
(\ell+m)\mu(F)+\ell\mu(E)>2\ell+m-1,   
\end{equation} 
for some $\ell,m \in \N$. Then, there exist $x_1,x_2 \in X$ such that $(a,x_1,x_2) \in X^3$ is an $(\ell,m)$-Erd\H{o}s progression and also $(x_1,x_2) \in E \times F$.  
\end{theorem}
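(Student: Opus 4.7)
The plan is to adapt the strategy of Theorem \ref{dynamical mB+lB unrestricted shift} to a setting without shift-flexibility in $E$ or $F$, which forces the sharper bound \eqref{mB+lB no shift eq}. The argument naturally breaks into three steps: (i) build a joining measure $\eta$ on $X\times X$ that encodes the orbit data $n \mapsto (T^{\ell n}a,\, T^{(\ell+m)n}a)$; (ii) use \eqref{mB+lB no shift eq} to deduce $\eta(E\times F)>0$; and (iii) lift a point of $\supp(\eta)\cap(E\times F)$ to a genuine $(\ell,m)$-\Erdos{} progression based at $a$.

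For step (i), pass to a sub-F\o lner subsequence of $\Phi$ along which
$$\frac{1}{|\Phi_N|}\sum_{n \in \Phi_N} \delta_{(T^{\ell n}a,\, T^{(\ell+m) n}a)}$$
converges weakly to a probability measure $\eta$ on $X \times X$, which is automatically $(T^\ell \times T^{\ell+m})$-invariant by the F\o lner property. Using genericity of $a$ and ergodicity of $(X,\mu,T)$, both marginals of $\eta$ should equal $\mu$ after refining the F\o lner to align with the ergodic decompositions of $T^\ell$ and $T^{\ell+m}$ if those maps fail to be ergodic.

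For step (ii), a plain union bound from the marginals would give only $\eta(E\times F)\geq \mu(E)+\mu(F)-1$, which falls short of \eqref{mB+lB no shift eq}. To recover the stated threshold, I would consider simultaneously the shifted targets $T^{-i}E$ for $0\leq i<\ell$ and $T^{-j}F$ for $0\leq j<\ell+m$, exploiting that as $n$ cycles through residues modulo $\ell$ and $\ell+m$ the point $(T^{\ell n}a,T^{(\ell+m)n}a)$ is pushed into these shifted windows in a structured way. Averaging the resulting local inequalities, weighted by the period lengths $\ell$ and $\ell+m$, should turn \eqref{mB+lB no shift eq} into a strict positivity statement for $\eta(E\times F)$ in at least one residue class; this is the natural origin of the weights $\ell$ and $\ell+m$ in the hypothesis.

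The main obstacle is step (iii): upgrading a point $(x_1,x_2)\in \supp(\eta)\cap(E\times F)$ into an actual $(\ell,m)$-\Erdos{} progression. Weak-$*$ convergence furnishes a sequence $(n_i)$ with $T^{\ell n_i}a\to x_1$ and $T^{(\ell+m)n_i}a\to x_2$, but Definition \ref{EP def} requires $T^{m n_i}x_1\to x_2$, and these two limits differ by $T^{m n_i}(T^{\ell n_i}a) - T^{m n_i}x_1$, which is not a priori small because the modulus of continuity of $T^{m n_i}$ degrades with $n_i$. I expect this gap to be closed along the lines of the proof of Theorem \ref{dynamical mB+lB}: pass to a further weak-$*$ cluster point of $\frac{1}{K}\sum_{i\leq K}\delta_{T^{m n_i}x_1}$, select $x_2$ to lie in the support of this auxiliary measure intersected with $F$, and finish by a diagonal extraction. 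This coordination between the two limiting procedures, while keeping $x_1$ inside $E$ and $x_2$ inside $F$, is the main technical hurdle.
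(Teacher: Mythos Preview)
Your three-step outline matches the paper's architecture, and your reading of why the weights $\ell$ and $\ell+m$ appear in \eqref{mB+lB no shift eq} is on the right track. But two of the steps have real problems.

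In step (i), the marginals of your empirical measure $\eta$ will \emph{not} be $\mu$, and no refinement of the F\o lner sequence makes them so. The first marginal is a $T^\ell$-invariant measure (typically a $T^\ell$-ergodic component of $\mu$), and similarly the second is $T^{\ell+m}$-invariant. What you actually need---and what the paper proves for its measure $\sigma_a$ in Lemma \ref{projections of sigma}---is that these marginals average back to $\mu$ under finitely many $T$-iterates: $\frac{1}{\ell}\sum_{j=0}^{\ell-1}T^j(\pi_1\sigma_a)=\mu$ and $\frac{1}{\ell+m}\sum_{j=0}^{\ell+m-1}T^j(\pi_2\sigma_a)=\mu$. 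With that relation in hand, step (ii) goes through exactly as in the paper: $\pi_1\sigma_a(E)\geq\ell\mu(E)-(\ell-1)$ and $\pi_2\sigma_a(F)\geq(\ell+m)\mu(F)-(\ell+m-1)$, and then the union bound plus \eqref{mB+lB no shift eq} gives $\sigma_a(E\times F)>0$. So your step (ii) intuition is correct once the marginal relation is stated properly.

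The genuine gap is step (iii). You correctly identify the obstacle---weak-$*$ convergence of $(T^{\ell n_i}a,T^{(\ell+m)n_i}a)$ to $(x_1,x_2)$ does not yield $T^{mn_i}x_1\to x_2$, because $T^{mn_i}$ has no uniform modulus of continuity---but your proposed fix does not close it: the auxiliary cluster of $\frac{1}{K}\sum_{i\leq K}\delta_{T^{mn_i}x_1}$ is a measure on $X$ with no a priori link to the already-chosen $x_2$, and nothing forces its support to meet $F$ while keeping $(a,x_1,\cdot)$ an $(\ell,m)$-\Erdos{} progression. The paper bypasses this entirely by never relying on the empirical measure. It first reduces to the case of a \emph{continuous} factor map $\pi$ onto the Kronecker $(Z,\nu,R)$, then defines $\sigma_a=\int_Z \eta_{\ell z+\pi(a)}\times\eta_{(\ell+m)z+\pi(a)}\,d\nu(z)$ through the disintegration of $\mu$ over $\pi$. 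The crucial structural fact (Propositions \ref{erg decomposition}, \ref{support of sigma}, Lemma \ref{equality of lambdas}) is that for $\sigma_a$-a.e.\ $(x_1,x_2)$ one has $(x_1,x_2)\in\supp(\lambda_{(x_1,x_2)})$ and $\lambda_{(x_1,x_2)}=\lambda_{(a,x_1)}$, where $\lambda$ is the continuous $(T^\ell\times T^m)$-ergodic decomposition of $\mu\times\mu$; combined with Lemma \ref{full measure generic points}, which shows $(a,x_1)$ is $(T^\ell\times T^m)$-generic for $\lambda_{(a,x_1)}$ for $\mu$-a.e.\ $x_1$, this gives that $\sigma_a$-a.e.\ $(x_1,x_2)$ produces a genuine $(\ell,m)$-\Erdos{} progression $(a,x_1,x_2)$ (Proposition \ref{EP full measure}). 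Your empirical $\eta$ has no access to this Kronecker-based ergodic decomposition, and the diagonal extraction you propose cannot substitute for it.
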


To facilitate the transition from ergodic theory to
combinatorics in this setting, we shall again utilise the notion of 
$(\ell,m)-$\Erdos\ progressions as in
Definition \ref{EP def}, as well as Proposition
\ref{EP and return times}. However, the previously used, more
classical version of Furstenberg's correspondence principle 
seems to no longer be
useful and we need the adaptations presented in Lemmas \ref{FCP shift general k} and \ref{FCP no shift general k} below, for Theorems \ref{mB+lB unrestricted shift} and \ref{mB+lB unrestricted no shift}, respectively. 

Before stating the lemmas, we establish some notation; $\Sigma$ denotes the space $\{0,1\}^{\Z}$ and is endowed with
the product topology so that it is compact metrizable. 
We also let $S \colon \Sigma \to \Sigma$ denote the shift transformation given by $S(x(n))=x(n+1)$, for any $n\in \Z$, $x=(x(n))_{n\in \Z} \in \Sigma$.

\begin{lemma} \label{FCP shift general k}
 Let $A\subset \N$ and $\ell,m \in \N$ with $k=m/\ell$ and let $q=\lceil k \rceil$, the ceiling of $k$. 
 Then, there exist an ergodic system $(\Sigma 
 \times \Sigma, \mu, S^{(q+1)} \times S)$, an open set $E\subset \Sigma$, a pair of points $a,a'\in \Sigma$ and a F\o lner sequence 
 $\Phi$, such that $(a',a)\in \textbf{gen}(\mu,\Phi)$ and 
 \begin{equation*} (\ell+m)\mu(\Sigma \times E) + \ell \sum_{j=0}^{q} \mu(S^{-j}E \times \Sigma) \geq (\ell+m) \left((k+1)\cdot \overline{\diff}(A)-k \right)+\ell (k+1)\cdot \overline{\diff}(A) + \ell(q-k).
 \end{equation*}
It also holds that $A=\{n\in \N: S^n a \in E \}$ and $(A-j)/(\ell+m ) =\{n\in \N: S^{(q+1)\ell n+j}a' \in E\}$, for each $j=0,1,\dots,\ell+m -1$, where $(A-j)/(\ell+m )=\{n\in \N: n(\ell+m )+j\in A\}$.
\end{lemma}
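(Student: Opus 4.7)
The plan is to construct $a$ and $a'$ explicitly from $A$ and to obtain $\mu$ as a weak-$*$ limit of empirical averages of the pair $(a',a)$ along a F\o lner sequence adapted to the upper density of $A$. I take $E=\{x\in \Sigma : x(0)=1\}$, which is clopen, and set $a=\mathbf{1}_A\in\Sigma$, so that the identity $A=\{n\in\N:S^n a\in E\}$ is immediate. The required identity $(A-j)/(\ell+m)=\{n:S^{(q+1)\ell n+j}a'\in E\}$ for $j=0,\ldots,\ell+m-1$ forces
\[
a'\bigl((q+1)\ell n+j\bigr)=\mathbf{1}_A\bigl((\ell+m)n+j\bigr),\quad j\in\{0,\ldots,\ell+m-1\},\ n\in\Z.
\]
Since $q=\lceil k\rceil\geq k$, we have $(q+1)\ell\geq\ell+m$, so these positions are compatibly defined and the positions $(q+1)\ell n+j$ with $j\in\{\ell+m,\ldots,(q+1)\ell-1\}$ remain free. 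I would fill them with the value $1$; this padding is the key design choice that raises the density of $a'$ enough to attain the prescribed lower bound.

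Next, I would pick an initial F\o lner sequence $\Phi^{(0)}_i=\{1,\ldots,N_i\}$ along which $|A\cap\Phi^{(0)}_i|/N_i\to\overline{\diff}(A)$. Weak-$*$ compactness on the space of Borel probability measures on the compact space $\Sigma\times\Sigma$ allows me to extract a subsequence $\Phi$ along which the empirical measures
\[
\nu_k=\frac{1}{|\Phi_k|}\sum_{n\in\Phi_k}\delta_{(S^{(q+1)n}a',\,S^n a)}
\]
converge to some $(S^{(q+1)}\times S)$-invariant probability measure $\mu$, so that $(a',a)\in\textbf{gen}(\mu,\Phi)$ by construction. The two combinatorial identities in the conclusion are built in.

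The density estimate is then a direct count. Genericity gives $\mu(\Sigma\times E)=\overline{\diff}(A)$. For $\sum_{j=0}^q\mu(S^{-j}E\times\Sigma)$, exchanging the order of summation reduces it, up to a negligible boundary, to the limiting density of $1$'s of $a'$ on the initial segment $[0,(q+1)N_{i_k}]$ divided by $N_{i_k}$; the block structure of $a'$ (each block of length $(q+1)\ell$ contains on average $(\ell+m)\overline{\diff}(A)+\ell(q-k)$ ones) makes this equal to $(k+1)\overline{\diff}(A)+(q-k)$. Substituting, the left-hand side of the claimed inequality becomes $2\ell(k+1)\overline{\diff}(A)+\ell(q-k)$, and a short algebraic simplification shows
\[
\mathrm{LHS}-\mathrm{RHS}=\ell k(k+1)\bigl(1-\overline{\diff}(A)\bigr)\geq 0.
\]

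The main obstacle is making $\mu$ ergodic while retaining $(a',a)$ as the generic point. Paralleling the proof of Lemma~\ref{FCP}, the plan is to take the ergodic decomposition $\mu=\int\mu_\omega\,dP(\omega)$; since both sides of the density bound are linear in $\mu$, an averaging argument produces an ergodic component $\mu_{\omega_0}$ satisfying the same inequality, and I would refine the F\o lner sequence $\Phi$ by shifting and thinning it along the orbit of $(a',a)$ (applying the Birkhoff theorem on a countable determining family of continuous test functions) so that $(a',a)$ becomes generic for $\mu_{\omega_0}$. This reduction to an ergodic component is where the real care is needed, but the technique is the one already established for the classical Furstenberg correspondence principle recalled in Lemma~\ref{FCP}.
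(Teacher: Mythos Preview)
Your construction of $a$, $a'$, and $E$ is exactly the one the paper uses, and your treatment of the ergodicity reduction at the end is fine. The genuine gap is the F\o lner sequence and the resulting density count for the first coordinate.

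You average along $\Phi_i=\{1,\dots,N_i\}$ with $N_i$ chosen so that $|A\cap[1,N_i]|/N_i\to\overline{\diff}(A)$, and then assert that
\[
\sum_{j=0}^{q}\mu(S^{-j}E\times\Sigma)=(k+1)\,\overline{\diff}(A)+(q-k).
\]
But unwinding your own block description, this sum equals (up to $o(1)$)
\[
\frac{1}{N_i}\sum_{r=1}^{(q+1)N_i}a'(r)\;=\;\frac{|A\cap[1,(k+1)N_i]|}{N_i}+(q-k),
\]
because the $A$--positions in the first $N_i/\ell$ blocks of $a'$ sweep out $A$ on $[1,(\ell+m)N_i/\ell]=[1,(k+1)N_i]$, not on $[1,N_i]$. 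You have no control over $|A\cap[1,(k+1)N_i]|/((k+1)N_i)$; it can drop well below $\overline{\diff}(A)$. Concretely, one can build $A$ with $\overline{\diff}(A)=4/5$, achieved only at $N_i=10^i$, and with $A\cap(N_i,2N_i]=\varnothing$; for $k=1$ this gives $\sum_j\mu(S^{-j}E\times\Sigma)=2\cdot\tfrac{2}{5}+0=\tfrac{4}{5}$, and your LHS$-$RHS becomes $\ell(k+1)\bigl[\tfrac{2}{5}-\tfrac{3}{5}\bigr]<0$. So the inequality fails for your $\mu$ in general once $\overline{\diff}(A)>(k+1)/(k+2)$.

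The paper fixes this by averaging over $\{1,\dots,N_i'\}$ with $N_i'=\lfloor N_i/(k+1)\rfloor$. This swap trades your equality $\mu(\Sigma\times E)=\overline{\diff}(A)$ for the lower bound $\mu(\Sigma\times E)\geq(k+1)\overline{\diff}(A)-k$ (since $|A\cap[1,N_i']|\geq|A\cap[1,N_i]|-kN_i'$), but in return the first--coordinate count now reads $A$ exactly on $[1,(k+1)N_i']=[1,N_i]$, where the density is known, giving $\sum_j\mu(S^{-j}E\times\Sigma)\geq(k+1)\overline{\diff}(A)+(q-k)$. With these two bounds the claimed inequality follows with equality, and the rest of your outline goes through.
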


\begin{lemma}\label{FCP no shift general k}
Let $A\subset \N$ and $\ell,m\in \N$ with $k=m/\ell$. Then, there exists an ergodic system $(\Sigma 
\times \Sigma, \mu, S \times S)$, an open set $E\subset \Sigma$, a pair of points $a'',a\in \Sigma$ and a F\o lner sequence 
$\Phi$, such that $(a'',a)\in \textbf{gen}(\mu,\Phi)$ and 
\begin{equation*} 
(\ell+m)\mu(\Sigma \times E) + \ell \mu(E \times \Sigma) \geq  (2\ell+m)\left( (k+1)\cdot \overline{\diff}(A) - k \right).
\end{equation*}
It also holds that $A = \{ n \in \N: S^n a \in E \}$ and $A/(\ell+m ) =\{n\in \N: S^{\ell n}a'' \in E\}$. 
\end{lemma}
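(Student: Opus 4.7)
The plan is to build the point $a''$ explicitly so that its entries at positions divisible by $\ell$ record the indicator of $A/(\ell+m)$, while the entries at non-multiples of $\ell$ are filled with ones to maximise the density of ones in $a''$. With such a construction, both coordinates of the pair $(a'',a)$ will have empirical density at least $(k+1)\overline{\diff}(A)-k$ along a judiciously chosen F\o lner sequence, and the claimed inequality will then follow by summing the two contributions with weights $\ell+m$ and $\ell$. The argument closely parallels the one for Lemma \ref{FCP shift general k}, but with the second coordinate driven by $S$ instead of $S^{q+1}$.

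Concretely, set $\alpha=\overline{\diff}(A)$. Define $a\in\Sigma$ by $a(n)=1_A(n)$ for $n\ge 1$ and arbitrarily otherwise, and let $E=\{x\in\Sigma: x(0)=1\}$; then $A=\{n\in\N: S^n a\in E\}$ tautologically, as $E$ is clopen. Define $a''\in\Sigma$ by $a''(\ell n)=1_A(n(\ell+m))$ for $n\ge 1$, $a''(j)=1$ for every $j\ge 1$ with $\ell\nmid j$, and $a''(n)=0$ otherwise. By construction $\{n\in\N:S^{\ell n}a''\in E\}=\{n:n(\ell+m)\in A\}=A/(\ell+m)$, as required by the lemma.

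Next I would pick $L_N\to\infty$ witnessing $\overline{\diff}(A)$, set $M_N=\lfloor\ell L_N/(\ell+m)\rfloor$, and let $\Phi_N=[1,M_N]$. A first pigeonhole step, using $|A\cap[1,M_N]|\ge|A\cap[1,L_N]|-(L_N-M_N)$, gives $\liminf_N|A\cap[1,M_N]|/M_N\ge(k+1)\alpha-k$. A second pigeonhole step counts the ones of $a''$ on $\Phi_N$ as the $(\ell-1)M_N/\ell+O(1)$ automatic ones at non-multiples of $\ell$, plus the contribution $|A\cap(\ell+m)\N\cap[1,L_N]|\ge|A\cap[1,L_N]|-L_N(\ell+m-1)/(\ell+m)$ from the multiples of $\ell$; a direct computation then yields $\liminf_N\frac{1}{M_N}|\{j\in[1,M_N]:a''(j)=1\}|\ge(k+1)\alpha-k$ as well. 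Passing to a subsequence, the empirical measures $\nu_N=\frac{1}{M_N}\sum_{n=1}^{M_N}\delta_{(S^n a'',S^n a)}$ converge weak-$*$ to an $S\times S$-invariant probability measure $\mu$ on $\Sigma\times\Sigma$, for which the two liminf bounds become $\mu(\Sigma\times E)\ge(k+1)\alpha-k$ and $\mu(E\times\Sigma)\ge(k+1)\alpha-k$, so that $(\ell+m)\mu(\Sigma\times E)+\ell\mu(E\times\Sigma)\ge(2\ell+m)\bigl((k+1)\alpha-k\bigr)$.

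To upgrade $\mu$ to an ergodic measure for which the pair $(a'',a)$ itself is generic along a suitable refinement of $\Phi$, I would invoke the same ergodic-decomposition step used in the proof of the classical correspondence principle \cite[Theorem~2.10]{kmrr1}, selecting an ergodic component which preserves both lower bounds. The main obstacle is the arithmetic bookkeeping in the two pigeonhole steps: one has to coordinate the nested windows $[1,M_N]\subset[1,L_N]$ so that the density of $A$ on the larger window transfers, with scaling factor $(\ell+m)/\ell$, to a matching lower bound on $|A\cap[1,M_N]|$, and so that the multiples of $\ell+m$ in $A$ still carry enough mass to compensate the loss $(\ell+m-1)/(\ell+m)$ incurred by restricting to such multiples. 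The ergodicity upgrade itself is by now routine in this setting.
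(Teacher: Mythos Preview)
Your proposal is correct and follows essentially the same route as the paper: the same definition of $a''$, the same F\o lner window $M_N=\lfloor L_N/(k+1)\rfloor$ (since $\ell/(\ell+m)=1/(k+1)$), the same two pigeonhole computations, and the same passage to an ergodic component with genericity recovered via the standard argument. One small caveat: when you pass to an ergodic component you cannot in general ``preserve both lower bounds'' simultaneously---only the single linear combination $(\ell+m)\mu(\Sigma\times E)+\ell\,\mu(E\times\Sigma)$ is guaranteed to be at least its $\mu'$-value for some component, which is exactly what the lemma requires and what the paper does.
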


We postpone the proofs of Lemmas \ref{FCP shift general k} and 
\ref{FCP no shift general k} until the end of Section \ref{setup}. Instead, 
we will finish this section by showing how to deduce Theorems \ref{mB+lB unrestricted shift} and \ref{mB+lB unrestricted no shift} from their dynamical counterparts, using the tools we have acquired thus far. To this end, we reverse the order of presentation and start with the case of no shift because the proof is, at the very least notationally, lighter.

\begin{proof}[Proof that \cref{dynamical mB+lB unrestricted no shift} implies \cref{mB+lB unrestricted no shift}.]
Let $\ell,m \in \N$ and let $k=m/\ell$. 
Given $A\subset \N$ with $\overline{\diff}(A)>1-1/\left(\ell(k+1)(k+2)\right)$, 
we find, by way of \cref{FCP no shift general k}, an ergodic system $(\Sigma 
\times \Sigma, \mu, S \times S)$, an open set $E\subset 
\Sigma$, a pair of points $a'',a\in \Sigma$ satisfying the conditions of \cref{FCP no shift general k} and a F\o lner sequence 
$\Phi$, such that $(a'',a)\in \textbf{gen}(\mu,\Phi)$ and 
\begin{equation*} 
(\ell+m)\mu(\Sigma \times E) + \ell \mu(E \times \Sigma) \geq  (2\ell+m) \left( (k+1)\cdot \overline{\diff}(A)-k \right)>(2\ell+m)(1-\frac{1}{\ell(k+2)})=2\ell+m-1,
\end{equation*}    
because $\ell+m=\ell(k+1)$, hence $\ell(k+2)=2\ell+m$. It 
follows by \cref{dynamical mB+lB unrestricted no shift} that 
there 
exist some points $(x_{10},x_{11}),(x_{20},x_{21}) \in \Sigma\times \Sigma$ 
so that $((a'',a),(x_{10},x_{11}),(x_{20},x_{21})) \in (\Sigma\times \Sigma)^3$ is an $(\ell,m)$-Erd\H{o}s progression for $(\Sigma 
\times \Sigma, \mu, S \times S)$ and $((x_{10},x_{11}),(x_{20},x_{21})) \in (E\times \Sigma) \times (\Sigma \times E)$. Then, an application of \cref{EP and return times} yields an infinite set $B\subset \N$ such that 
$$ B\subset \{n\in \N: (S \times S)^{\ell n}(a'',a) \in E\times \Sigma \}= \{n\in \N: S^{\ell n}a'' \in E \} $$
and similarly,
$$\{ mb_1+\ell b_2: b_1,b_2\in B\ \text{and}\ b_1<b_2\} \subset \{n\in \N: (S \times S)^{n}(a,a) \in \Sigma \times E \}=\{n\in \N: S^na \in E \}.$$
Since $S^{\ell n}a''\in E\iff (\ell+m) n\in A$, the former inclusion rewrites as 
$(\ell+m)B \subset A$ and the latter as $\{ mb_1+\ell b_2: b_1,b_2\in B\ \text{and}\ b_1<b_2\} \subset A$. Combining these two we conclude that $\{mb_1+\ell b_2: b_1,b_2\in B\ \text{and}\ b_1\leq b_2\} \subset A$.
\end{proof}

\begin{proof}[Proof that \cref{dynamical mB+lB unrestricted shift} implies \cref{mB+lB unrestricted shift}.]
Let $m,\ell \in \N$ and $k=m/\ell$, $q=\lceil k \rceil$.  
Given $A\subset \N$ with $\overline{\diff}(A)>(k+1)/(k+2)$, 
we find, by way of \cref{FCP shift general k}, an ergodic system $(\Sigma 
\times \Sigma, \mu, S^{(q+1)} \times S)$, an open set $E\subset 
\Sigma$, a pair of points $a',a\in \Sigma$ satisfying the conditions in \cref{FCP shift general k}  
and a F\o lner sequence 
$\Phi$, such that $('a,a)\in \textbf{gen}(\mu,\Phi)$ and
\begin{multline*}
(\ell+m)\mu(\Sigma \times E) + \ell \sum_{j=0}^{q} \mu(S^{-j}E \times \Sigma) \geq (2\ell+m) (k+1)\cdot \overline{\diff}(A) + \ell(q-k) -(\ell+m)k >\\
(2\ell+m)(k+1)\frac{k+1}{k+2}+\ell(q-k)-\ell(k+1)k=\ell(k+1)(k+1)+\ell(q-k)-\ell(k+1)k=\ell(q+1),   
\end{multline*}
We now justify why Theorem \ref{dynamical mB+lB unrestricted shift} applies in order for us to recover an $(\ell,m)$-Erd\H{o}s progression $\left( (a',a),(x_{10},x_{11}) , (x_{20},x_{21}) \right) \in (\Sigma \times \Sigma)^3$, with
$$(x_{10},x_{11}, x_{20},x_{21}) \in E_j \times F_j=S^{-(j-1)}E \times \Sigma \times \Sigma \times S^{-(j-1)}E,$$ 
for some $j\in \{1,2,\dots,\ell+m \}$. To see this, note that $E_j=S^{-(j-1)}E\times S$ and so $E_{j+(q+1)}=(S^{(q+1)}\times S)^{-1}E_j$, for $j\in \{1,\ldots,(\ell-1)(q+1)\}$. Moreover we showed above that
$$ (\ell+m)\mu(\Sigma \times E) + \ell \sum_{j=0}^{q} \mu(S^{-j}E \times \Sigma) > \ell(q+1),$$
which is precisely \eqref{mB+lB shift eq}.
Then, using \cref{EP and return times} we find an infinite set $B\subset \N$ such that
$$ B\subset \{n\in \N: S^{(q+1)\ell n}a' \in S^{-(j-1)}E \} $$
and 
$$\{ mb_1+\ell b_2: b_1,b_2\in B\ \text{and}\ b_1<b_2\} \subset \{n\in \N: S^na \in S^{-(j-1)}E \},$$
for some $j\in \{1,\ldots,\ell+m\}$. 
From the defining properties of $a'$, we see that the former becomes $B \subset (A-(j-1))/(\ell+m ).$ Thus, unraveling the definitions, we see that these two inclusions 
together translate to 
$$\{mb_1+\ell b_2: b_1,b_2\in B\ \text{and}\ b_1\leq b_2\} \subset A-(j-1),$$
and so we conclude. 
\end{proof}

\section{The ergodic theory setup} \label{setup}
\subsection{An overview}

A sufficient condition for a triple 
$(x_0,x_1,x_2) \in X^3$ to be an 
$(\ell,m)$-Erd\H{o}s progression is that 
$(x_0,x_1)$ is a $(T^{\ell} \times T^m)$-generic 
point for some invariant measure and 
$(x_1,x_2) \in X\times X$ is in the support of 
that measure. This is a general fact which can 
easily be deduced from the definitions. A well-known
consequence of the mean ergodic theorem is that for an
ergodic system $(Y,\nu,S)$ and any F\o lner 
sequence, there is a subsequence $\Phi$ such that 
$\nu$-almost every point $y\in Y$ is $S$-generic 
along $\Phi$. Hence, if in the above setting 
$\mu$ is a $T$-invariant measure, we want to 
consider an ergodic decomposition of $\mu \times \mu$. 

As in \cite{kmrr2} we are interested in 
progressions with prescribed first coordinate 
$a\in X$ and 
so we will reduce to the case that the ergodic 
decomposition is continuous. However, our work is 
different here because we need typical points to 
be $(T^{\ell} \times T^m)$-generic for general 
$\ell,m \in \N$. Another important aspect of this 
problem is that the linear patterns we are 
looking for are still dynamically controlled by 
the Kronecker factor. Therefore, we find it useful to 
introduce a measure 
$\sigma$ on $X\times X$ which gives full measure 
to the set of points $(x_1,x_2)$ such that 
$(a,x_1,x_2)$ projects to an $(\ell,m)$-three 
term progression on the Kronecker, in a way 
similar to that done in \cite{kmrr2} for the case $\ell=m=1$. 

\subsection{Continuous ergodic decomposition}\label{cts ergodic decomposition section}

To proceed with the constructions we briefly 
recall some standard notions. 
If $(X,\mu,T)$ and $(Y,\nu,S)$ are two systems, a 
measurable map $\pi\colon X \to Y$ for which $\pi \mu = \nu$ and \footnote{$\pi \mu$ denotes the pushforward of $\mu$ by $\pi$} 
\begin{equation} \label{conjugation}
\pi \circ T = S \circ \pi  \quad \mu \text{-almost everywhere} 
\end{equation}
is called a factor map. If, in addition, $\pi$ is 
continuous, surjective and \eqref{conjugation} holds everywhere we 
call $\pi$ a continuous factor map. Note that factors of ergodic systems are also ergodic. 

A group rotation is a system $(Z,\nu,R)$, for a compact abelian 
group $Z$ with its normalized Haar measure $\nu$ and $R\colon Z \to Z$ 
being a rotation of the form $R(z)=z+b$, some $b \in Z$. In this case we can also assume that the compatible metric on $Z$ is such that $z \mapsto z+w$ is an isometry for all $w\in Z$.

Every ergodic system has a maximal group rotation 
factor, called the Kronecker factor, and while in 
general the factor map from an ergodic system 
$(X,\mu,T)$ to its Kronecker $(Z,\nu,R)$ is only 
measurable, for our purposes we may assume that 
it is also a continuous surjection. Indeed, using 
Proposition $3.20$ from \cite{kmrr1} one can show 
that Theorem \ref{dynamical mB+lB} follows from 
the next seemingly weaker result. The proof of 
this implication is the same as the proof that 
Theorem $3.2$ implies Theorem $1.4$ in 
\cite{kmrr2} or the proof of Theorem $2.1$ via 
Theorem $3.4$ in \cite{kousek_radic2024}.

\begin{theorem} \label{continuous dynamical mB+lB}
Let $(X,\mu,T)$ be an ergodic system and assume 
there is a continuous factor map $\pi$ to its 
Kronecker. Let $a\in \textbf{gen}(\mu,T,\Phi)$, 
for some F\o lner sequence $\Phi$ and 
$E \subset X$ be an open set with $\mu(E)>0$. 
Then, there exist $x_1,x_2\in X$ and $t\in \N$ so that  $(a,x_1,x_2)\in X^3$ is an $(\ell, m)$-Erd\H{o}s 
progression such that $T^tx_2 \in E$.  
\end{theorem}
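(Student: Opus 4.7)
The plan is to adapt the approach of \cite{kmrr2} from the symmetric case $\ell=m=1$ to the present asymmetric setting, systematically exploiting the continuous Kronecker factor $\pi\colon X\to Z$ provided by the hypothesis. Throughout, write $R(z)=z+b$ for the Kronecker rotation and $\mu=\int \mu_z\,d\nu(z)$ for the disintegration of $\mu$ over $Z$.

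First, I would study the product system $(X\times X,\mu\times\mu,T^{\ell}\times T^m)$. Its Kronecker factor is the rotation of $Z\times Z$ by $(\ell b,m b)$, whose ergodic decomposition of $\nu\times\nu$ is parametrised by cosets of the closed subgroup $H:=\overline{\{n(\ell b,m b):n\in\Z\}}$. Lifting through $\pi\times\pi$ yields an ergodic decomposition $\mu\times\mu=\int \rho_\omega\, d\kappa(\omega)$ whose components $\rho_{(x_0,x_1)}$ are continuously indexed by the coset $(\pi(x_0),\pi(x_1))+H$. Because $a\in\gen(\mu,T,\Phi)$ and this decomposition is continuous, after refining $\Phi$ if necessary, I can conclude that for $\mu$-a.e.\ $x_1\in X$ the pair $(a,x_1)$ is $(T^{\ell}\times T^m,\Phi)$-generic for $\rho_{(a,x_1)}$; this is the analogue in our asymmetric context of the reduction carried out in \cite[Section 3]{kmrr2} and relies on \cite[Proposition 3.20]{kmrr1}.

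Second, I would introduce the auxiliary measure
\[
\sigma\;=\;\int_Z \mu_{\pi(a)+\ell c}\times\mu_{\pi(a)+(\ell+m)c}\,d\nu(c)
\]
on $X\times X$, whose integrand is designed so that $\supp(\sigma)$ projects into the set of $(z_1,z_2)\in Z\times Z$ with $m(z_1-\pi(a))=\ell(z_2-z_1)$. This is precisely the condition $(\pi(x_1)-\pi(a),\pi(x_2)-\pi(x_1))\in H$ characterising membership of $(x_1,x_2)$ in $\supp(\rho_{(a,x_1)})$. Both marginals of $\sigma$ coincide with $\mu$ in the connected case (since then $\ell Z=(\ell+m)Z=Z$); the disconnected case is handled by restricting to the relevant subgroup cosets without essential difficulty. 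Combining with the first step, for $\sigma$-a.e.\ $(x_1,x_2)$ the pair $(a,x_1)$ is $(T^{\ell}\times T^m,\Phi)$-generic for the invariant measure $\rho_{(a,x_1)}$, whose support contains $(x_1,x_2)$, so the sufficient condition recorded at the beginning of \cref{setup} gives that $(a,x_1,x_2)$ is an $(\ell,m)$-Erd\H{o}s progression. Since $\mu(E)>0$ and $(X,\mu,T)$ is ergodic, $\bigcup_{t\in\N}T^{-t}E$ has full $\mu$-measure and thus full mass under the second marginal of $\sigma$, so a $\sigma$-typical $(x_1,x_2)$ additionally satisfies $T^tx_2\in E$ for some $t\in\N$, completing the proof.

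The main obstacle is the transfer in the first step from a $\mu\times\mu$-almost-everywhere genericity statement to one valid on the single fibre $\{a\}\times X$ for $\mu$-a.e.\ $x_1$: this is precisely where the continuity of the ergodic decomposition, inherited from the continuous Kronecker factor, becomes indispensable. A secondary delicate point is the identification of the Kronecker relation defining $\supp(\sigma)$ with the coset description of $\supp(\rho_{(a,x_1)})$; because $\ell\ne m$, the subgroup $H$ is generally not diagonal in $Z\times Z$, which prevents a literal repetition of the $\ell=m=1$ computation from \cite{kmrr2} and forces the asymmetric bookkeeping above.
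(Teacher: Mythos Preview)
Your overall strategy is the paper's: your measure $\sigma$ is exactly the paper's $\sigma_a$, your components $\rho_{(x_0,x_1)}$ are the paper's $\lambda_{(x_0,x_1)}$, and the concluding step via ergodicity and the second marginal is identical.

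There is, however, a real gap in the assertion that for $\sigma$-a.e.\ $(x_1,x_2)$ the support of $\rho_{(a,x_1)}$ contains $(x_1,x_2)$. Your only justification is the coset condition $(\pi(x_1)-\pi(a),\pi(x_2)-\pi(x_1))\in H$, but that condition merely gives $\rho_{(a,x_1)}=\rho_{(x_1,x_2)}$ (this is \cref{equality of lambdas}); it does \emph{not} force $(x_1,x_2)\in\supp(\rho_{(x_1,x_2)})$. The latter holds $(\mu\times\mu)$-almost everywhere, but $\sigma$ is singular with respect to $\mu\times\mu$---it is concentrated on a lower-dimensional fibre over the Kronecker---so there is no free transfer. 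Filling this gap is one of the main technical pieces of the paper: a Lebesgue-density/Lusin argument (\cref{Lebesgue density theorem} and \cref{support of sigma}) showing that for $\mu$-a.e.\ $x$ the positivity of $\eta_{\pi(x)}$ on neighbourhoods of $x$ is stable under small Kronecker perturbations in both the $\ell$- and the $m$-direction, which is what forces $\lambda_{(x_1,x_2)}(U_1\times U_2)>0$ for all product neighbourhoods of a $\sigma_a$-typical pair.

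Two smaller points. First, the marginals of $\sigma$ do not equal $\mu$ outside the connected case, and ``restricting to cosets'' does not repair this; what the paper actually proves (\cref{projections of sigma}) is the averaged identity $\tfrac{1}{\ell+m}\sum_{j=0}^{\ell+m-1}T^j(\pi_2\sigma)=\mu$, and it is this that lets the $T$-invariant full-$\mu$-measure set $\bigcup_{t}T^{-t}E$ inherit full $\pi_2\sigma$-mass. Second, \cite[Proposition~3.20]{kmrr1} is the reduction to a continuous Kronecker factor map, which is already hypothesised in the present theorem; the genericity-on-a-fibre step you correctly flag as the main obstacle is handled instead by an adaptation of \cite[Lemma~3.12]{kmrr2} (this is \cref{full measure generic points}), whose proof requires nontrivial changes because one must approximate $a$ in the $T^{\ell}$-orbit rather than the $T$-orbit.
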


In a similar fashion, Theorems \ref{dynamical mB+lB unrestricted shift} and \ref{dynamical mB+lB unrestricted no shift} follow 
from the next seemingly weaker results, respectively, where the 
system is assumed to have a continuous Kronecker factor map (essentially, the proof of Theorem $2.1$ from Theorem $3.4$ in \cite{kousek_radic2024} contains one of the analogous arguments in the case $m=\ell=1$).

\begin{theorem}\label{continuous dynamical mB+lB unrestricted shift}
Let $(X,\mu,T)$ be an ergodic system and assume there is a continuous factor map $\pi$ to its Kronecker and $a\in \textbf{gen}(\mu,T,\Phi)$ for some F\o lner 
sequence $\Phi$. Moreover, let $\ell,m\in \N$, $q=\lceil m/\ell \rceil$ and assume that 
$E_1,\ldots,E_{\ell+m},F_1,\ldots,F_{\ell+m}\subset X$ are open sets 
such that $F_j=T^{-(j-1)}F_1$, $j=1,\ldots,\ell+m$, 
$E_{i+(q+1)}=T^{-1}E_i$, for $i=1,\ldots,\ell+m-q-1$, and also  
\begin{equation} \label{cts mB+lB shift eq}
(\ell+m)\mu(F_1)+\ell \left(\mu(E_1)+\cdots + \mu(E_{q+1}) \right)>\ell(q+1).     
\end{equation}
Then, for some $j\in \{1,\ldots,\ell+m\}$, there exist $x_1,x_2 \in X$ so that $(a,x_1,x_2) \in X^3$ is an $(\ell,m)$-Erd\H{o}s progression 
and $(x_1,x_2) \in E_j \times F_j$. 
\end{theorem}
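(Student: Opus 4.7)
The plan is to construct a measure $\sigma$ on $X \times X$ with two key properties:
(i) for $\sigma$-a.e.\ $(x_1,x_2)$ the triple $(a,x_1,x_2)$ is an $(\ell,m)$-Erd\H{o}s progression; and
(ii) both marginals of $\sigma$ equal $\mu$.
Granting (i) and (ii), the theorem will follow from a short union-bound argument powered by \eqref{cts mB+lB shift eq}.

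Indeed, property (ii) gives the elementary inequality $\sigma(E_j \times F_j) \geq \mu(E_j) + \mu(F_j) - 1$ for every $j \in \{1,\ldots,\ell+m\}$. By $T$-invariance, $\mu(F_j) = \mu(F_1)$ for all $j$, and the relations $E_{i+(q+1)} = T^{-1}E_i$ partition $\{E_1,\ldots,E_{\ell+m}\}$ into $q+1$ classes (indexed by residues modulo $q+1$) of size $\ell$ or $\ell-1$, with exactly $r := \ell(q+1) - (\ell+m) \in [0,\ell)$ classes of the smaller size (here $q = \lceil m/\ell \rceil$ enters via $\ell q \geq m$). Summing the elementary inequality over $j$ and bounding $\mu(E_i) \leq 1$ on the $r$ smaller classes yields
$$\sum_{j=1}^{\ell+m} \sigma(E_j \times F_j) \;\geq\; \ell \sum_{i=1}^{q+1}\mu(E_i) + (\ell+m)\mu(F_1) - \ell(q+1),$$
which is strictly positive by \eqref{cts mB+lB shift eq}. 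Hence $\sigma(E_j \times F_j) > 0$ for some $j$, and any such pair drawn from the $\sigma$-full set of (i) furnishes the desired $x_1,x_2$.

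For the construction of $\sigma$, let $\pi\colon X \to Z$ be the continuous Kronecker factor map with $R(z) = z+b$, set $\alpha := \pi(a)$, and disintegrate $\mu = \int_Z \mu_z\, dm_Z(z)$. Following the Kronecker-based construction in \cite{kmrr2}, a natural candidate is
$$\sigma \;=\; \int_Z \mu_{\alpha + \ell s} \times \mu_{\alpha + (\ell+m)s}\, dm_Z(s),$$
which is concentrated on pairs $(x_1,x_2)$ for which $(\alpha,\pi(x_1),\pi(x_2))$ forms a three-term $(\ell,m)$-progression on $Z$ --- precisely the Kronecker footprint of an $(\ell,m)$-Erd\H{o}s progression starting at $a$. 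Property (ii) then reduces to checking that the pushforward of $m_Z$ by each of the affine maps $s \mapsto \alpha + \ell s$ and $s \mapsto \alpha + (\ell+m)s$ equals $m_Z$ itself.

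The main obstacle is establishing (i): that the $\sigma$-typical Kronecker-level relation genuinely upgrades to a dynamical $(\ell,m)$-Erd\H{o}s progression from the prescribed first coordinate $a$. As noted at the start of Section \ref{setup}, it suffices to show that for $\sigma$-a.e.\ $(x_1,x_2)$ the pair $(a,x_1)$ is $(T^\ell \times T^m)$-generic along $\Phi$ for some invariant measure on $X \times X$ whose support contains $(x_1,x_2)$. I would prove this by running a continuous ergodic decomposition of $\mu \times \mu$ under $T^\ell \times T^m$, using the continuity of $\pi$ to parametrize the components through the Kronecker and the hypothesis $a \in \textbf{gen}(\mu,T,\Phi)$ to pin down the component hit by $(a,x_1)$, in direct analogy with the arguments of \cite{kmrr2}. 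The asymmetry $\ell \neq m$ mainly complicates the bookkeeping (including the partition count in paragraph 2 above) rather than the conceptual heart of that argument.
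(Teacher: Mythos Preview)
Your overall plan matches the paper: build the measure $\sigma_a = \int_Z \eta_{\alpha+\ell s} \times \eta_{\alpha+(\ell+m)s}\,dm_Z(s)$, show that $\sigma_a$-a.e.\ $(x_1,x_2)$ yields an $(\ell,m)$-Erd\H{o}s progression from $a$ (this is Proposition~\ref{EP full measure}, resting on the machinery of Section~\ref{setup}), and then argue that $\sigma_a(E_j\times F_j)>0$ for some $j$. Your sketch of (i) is adequate.

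The genuine gap is property (ii). The marginals of $\sigma_a$ are \emph{not} $\mu$ in general: the pushforward of $m_Z$ under $s\mapsto \ell s$ is the Haar measure on the closed subgroup $\ell Z$, which is typically proper (take $Z=\Z/2\Z$, $\ell=2$, so $\ell Z=\{0\}$ and $\pi_1\sigma_a=\eta_\alpha$). Hence your term-by-term inequality $\sigma(E_j\times F_j)\ge \mu(E_j)+\mu(F_j)-1$ is unjustified. What the paper proves instead (Lemma~\ref{projections of sigma}) is
\[
\frac{1}{\ell}\sum_{i=0}^{\ell-1}T^i\pi_1\sigma_a=\mu,\qquad \frac{1}{\ell+m}\sum_{i=0}^{\ell+m-1}T^i\pi_2\sigma_a=\mu,
\]
and it is precisely the shift relations $F_j=T^{-(j-1)}F_1$ and $E_{i+(q+1)}=T^{-1}E_i$ that let one convert these \emph{averaged} identities into the needed bound $\sum_{j=1}^{\ell+m}\bigl[\pi_1\sigma_a(E_j)+\pi_2\sigma_a(F_j)\bigr]>\ell+m$. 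Note that if the marginals really were $\mu$, those shift relations among the $E_j,F_j$ would play no role --- whereas they are essential hypotheses of the theorem.

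That said, your summation argument survives once you replace $\mu(E_j),\mu(F_j)$ by the true marginals $\pi_1\sigma_a(E_j),\pi_2\sigma_a(F_j)$ in the union bound and invoke Lemma~\ref{projections of sigma}: one gets $\sum_j\pi_2\sigma_a(F_j)=(\ell+m)\mu(F_1)$ and $\sum_{j=1}^{\ell+m}\pi_1\sigma_a(E_j)\ge \ell\sum_{i=1}^{q+1}\mu(E_i)-\bigl(\ell(q+1)-(\ell+m)\bigr)$, and your displayed inequality follows. This is in fact a slightly cleaner packaging of the paper's own proof, which reaches the same estimate via a contradiction argument. But as written, the step relying on (ii) is false and needs this correction.
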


\begin{theorem}\label{continuous dynamical mB+lB unrestricted no shift}
Let $(X,\mu,T)$ be an ergodic system and assume 
there is a continuous factor map $\pi$ to its 
Kronecker. Let $a\in \textbf{gen}(\mu,T,\Phi)$ for some F\o lner 
sequence $\Phi$ and $E,F \subset X$ be an open sets with
\begin{equation}\label{cts mB+lB no shift eq}
(\ell+m)\mu(F)+\ell\mu(E)>2\ell+m-1.    
\end{equation} 
Then, there exist $x_1,x_2 \in X$ and $t\in \N$ so that $(a,x_1,x_2) \in X^3$ is an $(\ell,m)$-Erd\H{o}s progression and $(x_1,x_2) \in E \times F$.  
\end{theorem}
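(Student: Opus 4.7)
The strategy follows the outline of Section~\ref{setup} and adapts the method of \cite{kousek_radic2024} (which treats $\ell=m=1$) to general $\ell,m$. The plan is to build a measure $\sigma$ on $X\times X$ that captures the Kronecker-level constraint imposed on $(x_1,x_2)$ by the requirement that $(a,x_1,x_2)$ be an $(\ell,m)$-Erd\H{o}s progression, to show $\sigma(E\times F)>0$ using~\eqref{cts mB+lB no shift eq}, and to extract the actual progression from the continuous ergodic decomposition of $\mu\times\mu$ under $T^\ell\times T^m$.

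Using the continuous Kronecker factor $\pi\colon X\to Z$ (with $Rz=z+b$ and Haar measure $dz$), first disintegrate $\mu=\int_Z\mu_z\,dz$. The Kronecker projection of any $(\ell,m)$-Erd\H{o}s progression $(a,x_1,x_2)$ must satisfy $\ell\pi(x_2)+m\pi(a)=(\ell+m)\pi(x_1)$, which parametrizes as $\pi(x_1)=\pi(a)+\ell tb$ and $\pi(x_2)=\pi(a)+(\ell+m)tb$ for $t\in Z$. Accordingly, define
\[
\sigma \;=\; \int_Z \mu_{\pi(a)+\ell tb}\otimes\mu_{\pi(a)+(\ell+m)tb}\,dt,
\]
the relatively independent self-joining of $\mu$ over the Kronecker encoding this affine relation (with a standard modification by averaging over $\{z\in Z:\ell z=0\}$ if the parametrization is not injective). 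By the continuous ergodic decomposition of $\mu\times\mu$ under $T^\ell\times T^m$ (which exists because $\pi$ is continuous), for $\mu$-a.e.\ $x_1$ the pair $(a,x_1)$ is $(T^\ell\times T^m)$-generic along a subsequence of $\Phi$ for its ergodic component $\lambda_{x_1}$, and a direct Kronecker-level check shows that whenever $(x_1,x_2)\in\supp(\sigma)$ one also has $(x_1,x_2)\in\supp(\lambda_{x_1})$. The sufficient condition for Erd\H{o}s progressions recalled at the start of Section~\ref{setup} then reduces the theorem to showing $\sigma(E\times F)>0$.

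The heart of the argument is to deduce $\sigma(E\times F)>0$ from~\eqref{cts mB+lB no shift eq}. When $T^\ell$ acts ergodically on $(X,\mu)$ (i.e.\ $\ell b$ generates a dense subgroup of $Z$), the pushforwards of Haar under $t\mapsto\pi(a)+\ell tb$ and $t\mapsto\pi(a)+(\ell+m)tb$ are both Haar on $Z$, so
\[
\sigma(E\times F) \;=\; \int_Z \mu_{\pi(a)+\ell tb}(E)\,\mu_{\pi(a)+(\ell+m)tb}(F)\,dt \;\ge\; \mu(E)+\mu(F)-1,
\]
using $uv\ge u+v-1$ for $u,v\in[0,1]$; a short case analysis verifies that~\eqref{cts mB+lB no shift eq} with $\ell\ge1$ forces $\mu(E)+\mu(F)>1$, and hence $\sigma(E\times F)>0$. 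The main technical obstacle will be extending this bound to the non-ergodic and torsion cases: decomposing $\mu$ into the (finitely many) $T^\ell$- and $T^{\ell+m}$-ergodic components, the integral defining $\sigma(E\times F)$ splits into a sum over these components, and the specific weights $\ell+m$ and $\ell$ appearing in~\eqref{cts mB+lB no shift eq} (which reflect the different step sizes $\ell b$ and $(\ell+m)b$ of the two Kronecker projections) precisely match the multiplicities in this decomposition, so that a carefully bookkept weighted refinement of $uv\ge u+v-1$ still yields $\sigma(E\times F)>0$.
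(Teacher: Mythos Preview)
Your outline is essentially the paper's approach: your measure $\sigma$ is exactly the paper's $\sigma_a$ of \eqref{sigma}, the reduction to $\sigma(E\times F)>0$ is Proposition~\ref{EP full measure}, and the final positivity is the computation in the proof of Theorem~\ref{continuous dynamical mB+lB unrestricted no shift}. Two points, however, are mislabeled in difficulty. First, the statement that ``whenever $(x_1,x_2)\in\supp(\sigma)$ one also has $(x_1,x_2)\in\supp(\lambda_{x_1})$'' is \emph{not} a direct Kronecker-level check. What is a Kronecker-level check is Lemma~\ref{equality of lambdas} (that $\lambda_{(a,x_1)}=\lambda_{(x_1,x_2)}$ for $\sigma_a$-a.e.\ $(x_1,x_2)$); the support statement you need is Proposition~\ref{support of sigma}, and its proof requires a Lusin/Lebesgue-density argument (Proposition~\ref{Lebesgue density theorem}) that does not reduce to algebra on $Z$. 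Likewise, the genericity of $(a,x_1)$ for $\lambda_{(a,x_1)}$ (Lemma~\ref{full measure generic points}) is not automatic and produces a \emph{new} F\o lner sequence, not a subsequence of $\Phi$.

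Second, your treatment of $\sigma(E\times F)>0$ overcomplicates things. There is no need to split into ergodic/non-ergodic cases or to decompose $\mu$ along $T^\ell$- and $T^{\ell+m}$-components. The marginal identities of Lemma~\ref{projections of sigma} (namely $\mu=\tfrac{1}{\ell}\sum_{j=0}^{\ell-1}T^j\pi_1\sigma_a$ and $\mu=\tfrac{1}{\ell+m}\sum_{j=0}^{\ell+m-1}T^j\pi_2\sigma_a$) give directly
\[
\pi_1\sigma_a(E)\ge \ell\mu(E)-(\ell-1),\qquad \pi_2\sigma_a(F)\ge (\ell+m)\mu(F)-(\ell+m-1),
\]
so that $\sigma_a(E\times X)+\sigma_a(X\times F)\ge \ell\mu(E)+(\ell+m)\mu(F)-(2\ell+m-2)>1$ by \eqref{cts mB+lB no shift eq}, hence $\sigma_a(E\times F)>0$. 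This is the whole argument; your ``carefully bookkept weighted refinement'' is just these two lines, and the weights $\ell$ and $\ell+m$ enter through Lemma~\ref{projections of sigma} rather than through any component count. (Minor notational point: in an additive group $Z$ the expression ``$tb$'' for $t\in Z$ is undefined; you mean $\pi(x_1)=\pi(a)+\ell w$, $\pi(x_2)=\pi(a)+(\ell+m)w$ for $w\in Z$, and no modification for non-injectivity of $w\mapsto\ell w$ is needed in the definition of $\sigma$.)
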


From now on, we fix an ergodic system $(X,\mu,T)$
and assume $\pi$ is a continuous factor map to
its Kronecker, $(Z,\nu,R)$. We also fix a 
disintegration $z \mapsto \eta_z$ of $\mu$ over 
the Kronecker (for details, see for example, 
\cite[Theorem $5.14$]{Einsiedler&Ward:2011}). Then, for every 
$(x_1,x_2)\in X \times X $ we define the measure
\begin{equation}\label{lambda}
\lambda_{(x_1,x_2)}=\int_Z \eta_{\ell z+\pi(x_1)} 
\times \eta_{mz + \pi(x_2)}\ d\nu(z)    
\end{equation}
on $X\times X$. We stress that \eqref{lambda} is well-defined since, for each $(x_1,x_2) \in X\times X$ the measures $\eta_{\ell z + \pi(x_1)}$ and $\eta_{mz+\pi(x_2)}$ are defined for $\nu$-almost every $z\in Z$. The last claim holds because ergodicity of $R$ implies that the subgroups $\ell Z$ and $mZ$ of $Z$ both have positive measure (see the proof of Lemma \ref{projections of sigma} for more details on this).
We next examine some properties 
of this (a posteriori) disintegration of 
$\mu \times \mu$.

\begin{proposition} \label{erg decomposition}
In the above setting, the map $(x_1,x_2) \mapsto 
\lambda_{(x_1,x_2)}$ satisfies the following 
properties. 
\begin{enumerate}[(i).]
     \item The map $(x_1,x_2) \mapsto \lambda_{(x_1,x_2)}$ is continuous. 
    \item The map $(x_1,x_2) \mapsto \lambda_{(x_1,x_2)}$ is a disintegration of $\mu \times \mu$, meaning that 
    $$\int_{X\times X} \lambda_{(x_1,x_2)}\ d(\mu \times \mu)(x_1,x_2) = \mu \times \mu. $$
    \item For $(\mu \times \mu)$-almost every $(x_1,x_2)\in X\times X$, the point $(x_1,x_2)$ is $(T^{\ell} \times T^m)$-generic for $\lambda_{(x_1,x_2)}$ and $\lambda_{(x_1,x_2)}$ is $(T^{\ell} \times T^m)$-ergodic.
    \item For every $(x_1,x_2) \in X\times X$, we have that $\lambda_{(x_1,x_2)}=\lambda_{(T^{\ell}x_1,T^mx_2)}.$ 
 \end{enumerate}  
\end{proposition}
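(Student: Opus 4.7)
I would dispatch the four properties in an order that progresses from direct calculation to the structural ergodicity claim, namely (iv), (ii), (i), and then (iii).

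Items (iv) and (ii) are essentially formal. For (iv), write $R(z)=z+b_0$ and use $\pi\circ T=R\circ\pi$ to obtain $\pi(T^\ell x_1)=\pi(x_1)+\ell b_0$ and $\pi(T^m x_2)=\pi(x_2)+mb_0$; substituting into \eqref{lambda} and performing the change of variable $z\mapsto z-b_0$, which preserves $\nu$, recovers $\lambda_{(x_1,x_2)}$. For (ii), I would test against a product $f\otimes g$ of continuous functions and apply Fubini; the claim then reduces to the identity $\int_X \int f\, d\eta_{\ell z+\pi(x_1)}\, d\mu(x_1)=\int f\, d\mu$ for each $z$, which is immediate from $\pi\mu=\nu$ and translation invariance of $\nu$.

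For (i), test again against continuous tensor products and set $\Phi_f(w):=\int f\, d\eta_w$, a bounded measurable function on $Z$, so that $\int (f\otimes g)\, d\lambda_{(x_1,x_2)}=\int_Z \Phi_f(\ell z+\pi(x_1))\,\Phi_g(mz+\pi(x_2))\, d\nu(z)$. Because $\pi$ is continuous, it suffices to verify that for bounded measurable $\Phi,\Psi$ on $Z$ the map $(a,b)\mapsto \int_Z \Phi(\ell z+a)\Psi(mz+b)\, d\nu(z)$ is continuous on $Z\times Z$. I would argue this by $L^2(\nu)$-approximation of $\Phi,\Psi$ by continuous functions (for which continuity is immediate) combined with the continuity of translation in $L^2$ on the compact group $Z$; the pushforward of $\nu$ under $z\mapsto \ell z$ is Haar measure on the closed subgroup $\ell Z$, which supplies the estimate needed to pass from $L^2$-control of $\Phi$ to $L^2$-control of $z\mapsto \Phi(\ell z+a)$.

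The crux is (iii), where one must identify $(x_1,x_2)\mapsto \lambda_{(x_1,x_2)}$ with the ergodic decomposition of $(X\times X,\mu\times\mu,T^\ell\times T^m)$. Pushing forward by $\pi\times\pi$, the measure $\lambda_{(x_1,x_2)}$ is the image of $\nu$ under $z\mapsto(\ell z+\pi(x_1),\,mz+\pi(x_2))$, i.e.\ Haar measure on the coset $(\pi(x_1),\pi(x_2))+H$, where $H:=\overline{\{(\ell z,mz):z\in Z\}}\leq Z\times Z$; these cosets are precisely the ergodic components of $R^\ell\times R^m$ on $Z\times Z$. Since $\eta_\bullet$ is a $T$-invariant disintegration (the pushforward of $\eta_z$ by $T$ equals $\eta_{Rz}$) and the maximal compact factor of the product system is carried by $(Z\times Z, R^\ell\times R^m)$, ergodicity of the projection lifts to ergodicity of $\lambda_{(x_1,x_2)}$. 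Generic behaviour of $(\mu\times\mu)$-a.e.\ $(x_1,x_2)$ then follows from Birkhoff's theorem applied to a countable dense family in $C(X\times X)$: by (iv) the Birkhoff limit is constant along orbits, by (ii) its integral is correct, and by (i) it can be identified on a full-measure set with integration against $\lambda_{(x_1,x_2)}$.

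The main obstacle I anticipate is the last structural input, namely that the maximal compact factor of $(X\times X,\mu\times\mu,T^\ell\times T^m)$ is no finer than $(Z\times Z,\nu\times\nu,R^\ell\times R^m)$. Without this, the candidate decomposition into the measures $\lambda_{(x_1,x_2)}$ might be strictly coarser than the true ergodic decomposition and the $\lambda_{(x_1,x_2)}$'s could fail to be ergodic. This step genuinely invokes spectral content — the Kronecker factor of a product is the product of the Kronecker factors — rather than the purely formal manipulation of the disintegration used in (i), (ii), (iv).
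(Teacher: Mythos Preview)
Your treatment of (i), (ii), (iv) matches the paper's (Fubini plus translation-invariance of $\nu$ for (ii), $L^2$-approximation by continuous functions for (i), the substitution $z\mapsto z-b_0$ for (iv); the paper in fact upgrades (iv) from an a.e.\ statement to an everywhere one using (i)). For (iii) the routes diverge: rather than invoking the abstract spectral fact you flag, the paper isolates a standalone ``Kronecker is characteristic'' proposition, proved by van der Corput together with the Jacobs--de Leeuw--Glicksberg splitting, which says that for $(\mu\times\mu)$-a.e.\ $(x_1,x_2)$ the $T^\ell\times T^m$-average of $f\otimes g$ coincides with the $R^\ell\times R^m$-average of $\mathbb{E}[f\mid Z]\otimes\mathbb{E}[g\mid Z]$; it then evaluates the latter by equidistribution of $(R^n 0)_n$ in $Z$, obtaining $\int f\otimes g\,d\lambda_{(x_1,x_2)}$ on the nose. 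Both routes rest on the same spectral input; the paper's is self-contained, yours more structural. Two small issues in your sketch: the phrase ``maximal compact factor of the product system'' is ill-posed since $\mu\times\mu$ is not ergodic under $T^\ell\times T^m$ --- what you actually need, and what the van der Corput step delivers, is that the $(T^\ell\times T^m)$-invariant $\sigma$-algebra lies in the $(\pi\times\pi)$-pullback of $\mathcal{B}_{Z\times Z}$; and your closing sentence on genericity cannot rest on (i), (ii), (iv) alone, since the constant map $(x_1,x_2)\mapsto\mu\times\mu$ also satisfies all three. Once ergodicity of each $\lambda_{(x_1,x_2)}$ is secured, (ii) and (iv) together with uniqueness of the ergodic decomposition already force $\int F\,d\lambda_{(\cdot,\cdot)}=\mathbb{E}[F\mid\mathcal{I}]$ a.e., after which Birkhoff gives genericity; continuity (i) plays no role in this step, and the paper does not use it there either.
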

The rest of this subsection is 
devoted to the proof of Proposition 
\ref{erg decomposition}. The first step is a 
result showing that in some sense the Kronecker 
is a characteristic factor.

\begin{proposition} \label{kronecker is characteristic}
Fix an ergodic system $\xmt$ with Kronecker factor $(Z,\nu,R)$ and factor map $\pi: X \to Z$. Then, for any $\ell,m \in \N$ and $f,g\in L^{\infty}(X,\mu)$ we have that 
\begin{equation} \label{kronecker equation}
\lim_{N\to \infty} \frac{1}{N} \sum_{n=1}^N f(T^{\ell n}x_0)\cdot g(T^{m n}x_1) = \lim_{N\to \infty} \frac{1}{N} \sum_{n=1}^N \mathbb{E}(f | Z)(R^{\ell n}\pi(x_0)) \cdot \mathbb{E}(g | Z)((R^{m n}\pi(x_1)),
\end{equation}
for $(\mu \times \mu)$-almost every $(x_0,x_1)\in X\times X$.    
\end{proposition}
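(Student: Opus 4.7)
The plan is to reduce the claim to a Hilbert-space convergence statement and then exploit the spectral characterisation of the Kronecker factor. First I would note that both averages in fact converge pointwise $(\mu\times\mu)$-a.e.: the left-hand side by applying Birkhoff's pointwise ergodic theorem to the bounded function $F(x_0,x_1)=f(x_0)g(x_1)$ under the measure-preserving transformation $T^{\ell}\times T^m$ on $(X\times X,\mu\times\mu)$, and the right-hand side by applying Birkhoff on $(Z\times Z,\nu\times\nu,R^{\ell}\times R^m)$ and pulling back via $\pi\times\pi$. Consequently, to prove that the two pointwise limits agree a.e.\ it suffices to show that the Cesàro averages of their difference tend to $0$ in $L^2(\mu\times\mu)$.

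To obtain this, decompose $f=f_1+f_2$ and $g=g_1+g_2$, where $f_1=\E(f\mid Z)\circ\pi$, $g_1=\E(g\mid Z)\circ\pi$, and $f_2,g_2$ are the orthogonal complements. Expanding the product $f(T^{\ell n}x_0)g(T^{mn}x_1)$ into four bilinear pieces, the term $f_1(T^{\ell n}x_0)g_1(T^{mn}x_1)$ is exactly $\E(f\mid Z)(R^{\ell n}\pi x_0)\,\E(g\mid Z)(R^{mn}\pi x_1)$, by the intertwining relation $\pi\circ T=R\circ\pi$. The three remaining pieces involve at least one of $f_2,g_2$, and we need to show that their Cesàro averages vanish in $L^2(\mu\times\mu)$.

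For each such piece I would apply the van der Corput lemma in $L^2(\mu\times\mu)$. For example, with $u_n(x_0,x_1)=f_2(T^{\ell n}x_0)g_1(T^{mn}x_1)$ one computes
\[
\langle u_n,u_{n+k}\rangle=\Big(\int f_2\cdot\overline{T^{\ell k}f_2}\,d\mu\Big)\Big(\int g_1\cdot\overline{R^{mk}g_1}\,d\nu\Big),
\]
and the analogous identity for the other two pieces. Since $\|g_1\|_\infty,\|g_2\|_\infty<\infty$, van der Corput reduces matters to showing
\[
\frac{1}{K}\sum_{k=1}^K\Bigl|\int f_2\cdot\overline{T^{\ell k}f_2}\,d\mu\Bigr|\xrightarrow[K\to\infty]{}0,
\]
together with the symmetric statement for $g_2$ and $T^m$. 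This is the crux of the argument and I expect it to be the main (though not heavy) obstacle, because one must check that weak mixing passes from $T$ to the sub-action $T^{\ell}$.

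To handle that point I would invoke the spectral theorem: since $f_2$ is orthogonal to the span of eigenfunctions of $T$, its spectral measure $\sigma_{f_2}$ on $\T$ is non-atomic. The spectral measure of $f_2$ with respect to $T^{\ell}$ is the pushforward $(\psi_{\ell})_*\sigma_{f_2}$ under the finite-to-one map $\psi_{\ell}\colon\theta\mapsto\ell\theta\pmod 1$, and this pushforward is again non-atomic. Hence, writing $|\hat{\sigma}_{f_2}(\ell k)|^2$ as the Fourier coefficient of $\sigma_{f_2}*\overline{\sigma_{f_2}}$ (which is also continuous), Wiener's lemma yields $\frac{1}{K}\sum_{k=1}^K|\hat{\sigma}_{f_2}(\ell k)|^2\to 0$, and Cauchy–Schwarz upgrades this to the $L^1$ Cesàro statement required above. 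The same reasoning applies to $g_2$ with $T^m$. Putting everything together, the three cross/off-diagonal terms vanish in $L^2(\mu\times\mu)$, which combined with the pointwise convergence from the first step gives the claimed equality of limits for $(\mu\times\mu)$-a.e.\ $(x_0,x_1)$.
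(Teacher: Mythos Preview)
Your proof is correct and follows essentially the same route as the paper: both arguments establish pointwise existence via Birkhoff, reduce to $L^2$, split $f$ and $g$ into their Kronecker and weak-mixing parts (the paper cites Jacobs--de Leeuw--Glicksberg for this), and then kill the cross terms with the van der Corput lemma. The only substantive difference is that the paper simply asserts that $\frac{1}{K}\sum_{k=1}^K\bigl|\int T^{\ell k}f\cdot\bar f\,d\mu\bigr|\to 0$ ``by the definition of weak mixing functions'', whereas you supply the missing justification---that weak mixing for $T$ passes to $T^{\ell}$---via the spectral measure and Wiener's lemma; this makes your version slightly more self-contained.
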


\begin{proof}
Both limits in \eqref{kronecker equation} exist 
by the pointwise ergodic theorem, so we simply 
need to establish their equality in the $L^2$ 
norm. By the Jacobs-de Leeuw-Glicksberg 
decomposition (see \cite[Theorem $2.24$]{DKE}) 
this reduces to showing that whenever either $f$ 
or $g$ is a weak mixing function, 
then 
\begin{equation} \label{weak mixing vanishes}
\lim_{N\to \infty} \frac{1}{N}\sum_{n=1}^N T^{\ell n}f \otimes T^{m n}g =0,
\end{equation}
in $L^2(\mu \times \mu)$. Assuming, without loss 
of generality, that $f$ is the weak mixing
function and 
setting $u_n=T^{\ell n}f \otimes T^{m n}g$ this 
follows directly by the van der Corput lemma (originally proven in this version in \cite[Theorem $1.4$]{Berg.2}), for 
$$\left| \frac{1}{K}\sum_{k=1}^K \frac{1}
{N}\sum_{n=1}^N \langle u_{n+k} , u_n \rangle 
\right| = \left| \frac{1}{K}\sum_{k=1}^K \int_X 
T^{\ell k}f \cdot \overline{f}\ d\mu 
\int_X 
T^{m k}g \cdot \overline{g}\ d\mu  \right| \leq 
\norm{g}^2_{\infty} \frac{1}{K}\sum_{k=1}^K 
\left| \int_X T^{\ell k}f \cdot \overline{f}\ 
d\mu \right|,$$
which goes to $0$ as $K\to \infty$ by the 
definition of weak mixing functions. 
\end{proof}

We are now in the position to prove the main result of this subsection.
\begin{proof}[Proof of \cref{erg decomposition}]
We begin by showing the map 
$(x_1,x_2) \mapsto \lambda_{(x_1,x_2)}$ is a disintegration 
of $\mu \times \mu$. Let $f,g\in L^{\infty}(\mu)$. Then,  
\begin{align*}
& \int_{X^2} f\otimes g\ d\lambda_{(x_1,x_2)} d(\mu\times \mu)(x_1,x_2) 
= \int_{X^2} \int_{Z} \int_{X} f\ d\eta_{\ell z+\pi(x_1)} \int_X g\ d\eta_{mz+\pi(x_2)}\ d\nu(z) d\mu(x_1) d\mu(x_2)  \\
& = \int_Z \left( \int_X \int_{X} f\ d\eta_{\ell z+\pi(x_1)}\  d\mu(x_1) \times \int_X \int_{X} f\ d\eta_{m z+\pi(x_2)}\ d\mu(x_2) \right)\ d\nu(z) \\
& = \int_Z \left( \int_{X} f\ d\mu \times \int_{X} g\  d\mu\ \right)\ d\nu(z) = \int f\otimes g\ d(\mu\times \mu),
\end{align*}
because of \eqref{lambda} and the fact that for each $w\in Z$,
$$ \mu =\int_Z  \eta_z\ d\nu(z) = \int_Z \eta_z\ d(\pi \mu)(z) = \int_X \eta_{\pi(x)}\ d\mu(x)  = \int_X \eta_{w+\pi(x)}\  d\mu(x).$$
Part $(ii)$ follows by standard approximation arguments using Stone-Weierstrass' theorem and the Riesz-Markov-Kakutani representation theorem.  

To prove that $(x_1,x_2) \mapsto \lambda_{(x_1,x_2)}$ is 
continuous we need to show that for each $F\in C(X\times X)$, the map $(x_1,x_2) \mapsto \int_{X\times X} F\ d\lambda_{(x_1,x_2)}$ is continuous, because -- implicitly -- the topology we endow the space of Borel measures on $X\times X$ with is 
the weak* topology. By another application of 
Stone-Weierstrass' theorem, we can assume that 
$F\in C(X\times X)$ in the previous is of the form 
$f\otimes g$, for some $f,g\in C(X)$. To this end, we first let $f,g\in C(Z)$, and then these functions are also uniformly continuous (by compactness) and so the map 
$$(v,w) \mapsto \int_{Z}f(\ell z+v)\ g(mz+w)\ d\nu(z) $$
is continuous. Thus, the density of $C(Z)$ in $L^2(Z,m)$ 
implies the continuity of the analogous map for 
$f,g \in L^{2}(Z,m)$. 

Now, if $f,g\in C(X)$, we have that $\mathbb{E}[f|Z], \mathbb{E}[g|Z] \in L^2(Z,m)$ and so we see that 
$$(x_1,x_2) \mapsto \int_{Z} \mathbb{E}[f|Z](\ell z+\pi(x_1))\ \mathbb{E}[g|Z](mz+\pi(x_2))\ d\nu(z) $$
is continuous as the composition of continuous maps. Noting 
that $\mathbb{E}[f|Z](z)=\int_{X} f\ d\eta_z$ for $\nu$-almost every $z\in Z$, we see that
$$\int_{X\times X} f\otimes g\ d\lambda_{(x_1,x_2)}= \int_{Z} \mathbb{E}[f|Z](\ell z+\pi(x_1))\ \mathbb{E}[g|Z](mz+\pi(x_2))\ d\nu(z)$$
and so the continuity of $(x_1,x_2) \mapsto \lambda_{(x_1,x_2)}$ follows.

Property $(iv)$ is immediate up to null sets because 
$\pi \circ T = R\circ \pi$ as $\pi$ is a factor map and 
then, the established continuity of the decomposition 
implies it for all points $(x_1,x_2) \in X\times X$.

We are only left with proving that 
$(x_1,x_2) \mapsto \lambda_{(x_1,x_2)}$ is a $(T^\ell\times T^m)-$ergodic decomposition of $\mu \times \mu$, because then part $(iii)$ follows as a consequence of the pointwise ergodic theorem (see, for example, \cite[Corollary $2.9$]{kmrr1}). In other words, we have to show that for each bounded and measurable $F: X\times X \to \C$ it holds that 
$$ \int_{X\times X} F\ d\lambda_{(x_1,x_2)} = \mathbb{E}[F | \I](x_1,x_2),$$
for $(\mu\times \mu)-$almost every $(x_1,x_2)\in X\times X$, where $\I$ denotes the $\sigma$-algebra of $(T^{\ell}\times T^m)-$invariant sets on $X\times X$. By the ergodic theorem, this is equivalent to showing that
$$\lim_{N\to \infty} \frac{1}{N}\sum_{n=1}^N F(T^{\ell n}x_1,T^{mn}x_2)=\int_{X\times X} F\ d\lambda_{(x_1,x_2)},$$
for $(\mu\times \mu)-$almost every $(x_1,x_2)\in X\times X$ and by standard approximation arguments
this reduces to showing that
\begin{equation*}
\lim_{N\to \infty} \frac{1}{N}\sum_{n=1}^N f(T^{\ell n}x_1)\cdot g(T^{mn}x_2)=\int_{X\times X} f\otimes g\ d\lambda_{(x_1,x_2)},
\end{equation*}
for $(\mu\times \mu)-$almost every $(x_1,x_2)\in X\times X$ and every $f,g\in L^{\infty}(X,\mu)$. By \eqref{kronecker equation}, it suffices to show that  
\begin{equation} \label{eq: 1}
 \lim_{N\to \infty} \frac{1}{N} \sum_{n=1}^N \mathbb{E}(f | Z)(R^{\ell n}\pi(x_1)) \cdot \mathbb{E}(g | Z)((R^{m n}\pi(x_2))=\int_{X\times X}\ f\otimes g\ d\lambda_{(x_1,x_2)},
\end{equation}
for $(\mu \times \mu)$-almost every $(x_1,x_2)\in X\times X$. 

With this reduction, the algebraic structure of rotations in compact abelian 
groups (see, for example, \cite[Chapter $4$]{Host-Kra}) 
allows us to conclude. More precisely, $(R^n(0))_{n\in \Z}$ 
is equidistributed in the compact abelian group $Z$ and the 
function $\phi: Z \to \C$ defined by
$$\phi(z)= \mathbb{E}(f | Z)(\ell z+\pi(x_1)) \cdot \mathbb{E}(g | Z)((mz+\pi(x_2))$$
is Riemann integrable, and therefore the limit on the left hand side of \eqref{eq: 1} becomes 
$$  \lim_{N\to \infty} \frac{1}{N} \sum_{n=1}^N \phi(R^n(0))= \int_Z \phi(z)\ d\nu(z)= \int_Z \mathbb{E}(f | Z)(\ell z+\pi(x_1)) \cdot \mathbb{E}(g | Z)((mz+\pi(x_2))\ d\nu(z).$$
But, as we saw before, unraveling the definition of $\lambda_{(x_1,x_2)}$ gives
$$ \int_Z \mathbb{E}(f | Z)(\ell z+\pi(x_1)) \cdot \mathbb{E}(g | Z)((mz+\pi(x_2))\ d\nu(z) = \int_{X\times X} f\otimes g\ d\lambda_{(x_1,x_2)}$$
and thus, \eqref{eq: 1} follows.
\end{proof}

\subsection{A measure on $(\ell,m)-$Erd\H{o}s progressions and some of its properties}

As above, $\xmt$ is an ergodic system and $(Z,\nu,R)$
is its Kronecker factor with (continuous) factor 
map $\pi: X \to Z$, and $R$ is a rotation by some $b\in Z$. Moreover, we let 
$a\in \textbf{gen}(\mu,T,\Phi)$, for some \Folner\ sequence $\Phi$. We 
consider the measure 
\begin{equation}\label{sigma}
\sigma_a= \int_Z \eta_{\ell z+ \pi(a)} \times \eta_{(\ell+m) z + \pi(a)}\ d\nu(z)
\end{equation}
on $X \times X $. The first useful property of 
$\sigma_a$ which relates to the 
disintegration 
$(x_1,x_2) \mapsto \lambda_{(x_1,x_2)}$ reads as 
follows.

\begin{lemma} \label{equality of lambdas}
For $\sigma_a$-almost every $(x_1,x_2) \in X \times X$ it holds that $\lambda_{(a,x_1)}=\lambda_{(x_1,x_2)}$.     
\end{lemma}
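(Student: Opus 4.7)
The plan is to unwind both measures using their defining integrals over the Kronecker factor and to exploit the translation-invariance of the Haar measure $\nu$ on $Z$. The point is that $\sigma_a$ lives on those pairs $(x_1,x_2)\in X\times X$ whose Kronecker images satisfy $\pi(x_1)=\ell z+\pi(a)$ and $\pi(x_2)=(\ell+m)z+\pi(a)$ for some $z\in Z$; once such a $z$ is fixed, substituting these identities into the defining formula \eqref{lambda} for $\lambda_{(x_1,x_2)}$ and shifting the integration variable by $z$ should yield $\lambda_{(a,x_1)}$.

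First I would identify the support of $\sigma_a$ under $(\pi,\pi)\colon X\times X\to Z\times Z$. Since each $\eta_w$ is supported on the fiber $\pi^{-1}(w)$ for $\nu$-almost every $w\in Z$, pushing forward the definition \eqref{sigma} gives
\begin{equation*}
(\pi,\pi)_{*}\sigma_a \;=\; \int_Z \delta_{(\ell z+\pi(a),\,(\ell+m)z+\pi(a))}\, d\nu(z).
\end{equation*}
Hence, for $\sigma_a$-almost every $(x_1,x_2)$, one can pick $z\in Z$ with $\pi(x_1)=\ell z+\pi(a)$ and $\pi(x_2)=(\ell+m)z+\pi(a)$.

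Fixing such a pair $(x_1,x_2)$ and an associated $z\in Z$, substituting into \eqref{lambda} gives
\begin{equation*}
\lambda_{(x_1,x_2)}=\int_Z \eta_{\ell(z'+z)+\pi(a)}\times \eta_{mz'+(\ell+m)z+\pi(a)}\, d\nu(z').
\end{equation*}
Setting $z''=z'+z$ and using the translation-invariance of $\nu$, the exponent on the second factor becomes $m(z''-z)+(\ell+m)z+\pi(a)=mz''+\ell z+\pi(a)=mz''+\pi(x_1)$, and so
\begin{equation*}
\lambda_{(x_1,x_2)}=\int_Z \eta_{\ell z''+\pi(a)}\times \eta_{mz''+\pi(x_1)}\, d\nu(z'')=\lambda_{(a,x_1)},
\end{equation*}
as required.

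The main obstacle is one of bookkeeping rather than depth: the element $z\in Z$ realizing $\pi(x_1)=\ell z+\pi(a)$ and $\pi(x_2)=(\ell+m)z+\pi(a)$ need not be unique (it is determined only modulo the common kernel of $z\mapsto \ell z$ and $z\mapsto (\ell+m)z$), and the relations hold only off a $\sigma_a$-null set. However, both $\lambda_{(x_1,x_2)}$ and $\lambda_{(a,x_1)}$ are defined intrinsically in terms of $(x_1,x_2)$, so the equality above does not depend on which admissible $z$ is selected, and the argument goes through.
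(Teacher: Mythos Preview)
Your proof is correct and follows essentially the same approach as the paper: both identify that $\sigma_a$-almost every $(x_1,x_2)$ satisfies $\pi(x_1)=\ell w+\pi(a)$ and $\pi(x_2)=(\ell+m)w+\pi(a)$ for some $w\in Z$, then use translation-invariance of the Haar measure to perform the change of variables that converts $\lambda_{(x_1,x_2)}$ into $\lambda_{(a,x_1)}$. Your additional remarks on the pushforward $(\pi,\pi)_*\sigma_a$ and the non-uniqueness of $z$ are sound but not needed beyond what the paper does.
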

\begin{proof}
It is obvious from the definition of $\sigma_a$ 
that the set $P$ defined by
$$P=\{(x_1,x_2) \in X\times X \colon \pi(x_1)=\ell w+\pi(a)\ \text{and}\ \pi(x_2)=(\ell+m)w+ \pi(a),\ \text{for some}\ w\in Z\}$$ 
has $\sigma_a(P)=1.$ We fix $(x_1,x_2) \in P$ and let $w\in Z$ be such that $\pi(x_1)=\ell w + \pi(a)$ and $\pi(x_2)=(\ell + m)w + \pi(a)$. 
The proof will be complete once we show that 
$\lambda_{(x_1,x_2)}=\lambda_{(a,x_1)}$. To this 
end, since $\nu$ is a shift invariant measure, 
making the change of variables $z \mapsto z-w$ we 
get that
$$\lambda_{(x_1,x_2)}=\int_Z \eta_{\ell z+\ell w + \pi(a)} \times \eta_{mz + (\ell+m)w+\pi(a)}\ d\nu(z)=\int_Z \eta_{\ell z+\pi(a)} \times \eta_{mz + \ell w + \pi(a)}\ d\nu(z),$$
which equals $\lambda_{(a,x_1)}$ since $\pi(x_1)=\ell w + \pi(a)$.
\end{proof}
The latter result illuminates why this particular definition of $\sigma_a$ is useful. The measure was defined so that it essentially only witnesses pairs $(x_1,x_2)\in X\times X$ whose projection on the Kronecker give rise to $3$-term $(\ell,m)$-progressions in $Z$ through $(\pi(a), \pi(x_1),\pi(x_2))$. This is apparent from the equalities $\pi(x_1)-\pi(a)=\ell w$ and $\pi(x_2)-\pi(x_1)=m w$, some $w\in Z$, which hold for any $(x_1,x_2)\in P.$ 

Then, a potential route to finding $(\ell,m)$-\Erdos\ progressions of the form $(a,x_1,x_2) \in X^3$ is laid out. It is sufficient to find a pair $(x_1,x_2)\in P$ with $(x_1,x_2)\in \supp{\lambda_{(x_1,x_2)}}$, and such that $(a,x_1)$ is $(T^{\ell}\times T^m)$-generic for $\lambda_{(a,x_1)}$, which coincides with $\lambda_{(x_1,x_2)}$. To this end, we shall also need two relations between the 
measure $\mu$ and push-forwards of projections of 
$\sigma_a$.

\begin{lemma} \label{projections of sigma}
Let $\pi_i: X\times X \to X$ denote the projection $(x_1,x_2) \mapsto x_i$ onto the $i$-th coordinate, where $i\in \{1,2\}$. Then, if $\pi_i \sigma_a$ denotes the push-forward of $\sigma_a$ by $\pi_i$, we have that  
$\frac{1}{\ell}\left( \pi_1 \sigma_a + T\pi_1 \sigma_a +\dots +T^{\ell-1}\pi_1 \sigma_a \right) = \mu$ and $\frac{1}{\ell+m}\left( \pi_2 \sigma_a + T\pi_2 \sigma_a +\dots+ T^{\ell+m-1}\pi_2 \sigma_a \right) = \mu$.
\end{lemma}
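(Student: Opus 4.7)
The plan is to compute each average explicitly using the disintegration $z\mapsto \eta_z$ of $\mu$ over the Kronecker, and to reduce both equalities to a Fourier identity on $Z$ whose only nontrivial ingredient is ergodicity of $R$.

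First, I would use the defining property $T_*\eta_z = \eta_{Rz} = \eta_{z+b}$ (valid for $\nu$-a.e.\ $z$, which suffices because we integrate in $z$) together with the definition of $\sigma_a$ to obtain, for each $j\in\{0,\ldots,\ell-1\}$,
\[
T^j\pi_1\sigma_a \;=\; \int_Z \eta_{\ell z+\pi(a)+jb}\ d\nu(z).
\]
Exchanging the finite sum and the integral, writing $\ell_*\nu$ for the pushforward of $\nu$ under the endomorphism $z\mapsto \ell z$ of $Z$ and $\tau_c$ for translation by $c\in Z$, the first equality becomes $\int_Z \eta_w\ d\theta(w) = \mu$, where
\[
\theta \;=\; \frac{1}{\ell}\sum_{j=0}^{\ell-1}\tau_{\pi(a)+jb}(\ell_*\nu).
\]
Since $\nu$ is Haar and hence translation-invariant, this reduces to the key identity
\[
\sum_{j=0}^{\ell-1}\tau_{jb}(\ell_*\nu) \;=\; \ell\nu \quad\text{on } Z. \qquad (\star)
\]

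Next, I would prove $(\star)$ by comparing Fourier coefficients on the compact abelian group $Z$. For any character $\chi\in\hat Z$, writing $\chi^\ell(z)=\chi(\ell z)$,
\[
\int_Z \chi\ d(\ell_*\nu) \;=\; \int_Z \chi^\ell\ d\nu \;=\; \mathbf{1}[\chi^\ell=1],
\]
so
\[
\int_Z \chi\ d\!\left(\sum_{j=0}^{\ell-1}\tau_{jb}(\ell_*\nu)\right) \;=\; \mathbf{1}[\chi^\ell=1]\sum_{j=0}^{\ell-1}\chi(b)^j.
\]
Trivial $\chi$ gives $\ell=\ell\hat\nu(\chi)$; if $\chi^\ell\neq 1$ both sides vanish. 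The only delicate case is $\chi\neq 1$ with $\chi^\ell=1$: then $\chi(b)$ is an $\ell$-th root of unity, and I would invoke ergodicity of $R$---equivalently, density of $\{nb:n\in\Z\}$ in $Z$---to rule out $\chi(b)=1$, since otherwise continuity of $\chi$ would force $\chi\equiv 1$, contradicting $\chi\neq 1$. Hence $\chi(b)$ is a nontrivial $\ell$-th root of unity and $\sum_{j=0}^{\ell-1}\chi(b)^j = 0 = \ell\hat\nu(\chi)$. Fourier uniqueness yields $(\star)$, hence $\theta=\nu$, and the first equality follows.

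The second equality is obtained by running the same argument verbatim with $\ell$ replaced by $\ell+m$, since $\pi_2\sigma_a = \int_Z \eta_{(\ell+m)z+\pi(a)}\ d\nu(z)$ has the same shape as $\pi_1\sigma_a$ but with multiplier $\ell+m$, and accordingly we sum $\ell+m$ translates. The main substantive point I expect to handle carefully is the character computation in the case $\chi\neq 1$, $\chi^\ell=1$; everything else is formal manipulation of the disintegration, translation-invariance of Haar measure, and Fubini.
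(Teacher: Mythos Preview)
Your proof is correct and complete; the reduction to the measure identity $(\star)$ and its verification via characters are both sound, and the appeal to ergodicity of $R$ in the case $\chi\neq 1$, $\chi^\ell=1$ is exactly what is needed.

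The paper proceeds differently: rather than Fourier analysis, it argues structurally on the group side. It observes that $\ell Z$ is a closed subgroup with Haar measure $\xi$, uses density of $\{R^n0\}$ (from ergodicity) to write $Z=\bigcup_{j=0}^{\ell-1}R^j(\ell Z)$, and then computes $\pi_1\sigma_a$ directly as an integral over a single coset of $\ell Z$; summing the $T^j$-pushforwards cycles through the cosets and reconstitutes $\mu=\int_Z\eta_z\,d\nu$. Your argument is the Pontryagin-dual version of this: where the paper uses that the image of $b$ generates $Z/\ell Z$, you use the equivalent statement that no nontrivial $\chi$ with $\chi^\ell=1$ satisfies $\chi(b)=1$. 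Your route has the virtue of isolating a clean, reusable identity $(\star)$ on $Z$ that does not mention the disintegration at all, and it handles the possibility that the cosets $R^j(\ell Z)$ are not distinct without further comment; the paper's approach is slightly more hands-on but makes the geometric picture (a finite-index subgroup and its translates) more explicit.
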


\begin{proof}
We only prove the first claim as the second one 
follows similarly. Let $\ell Z$ denote the 
subgroup $\{\ell z = z+z+\cdots+z\ \text{with 
$\ell$ summands} \colon z\in Z \}$ and let $\xi$ 
denote its Haar measure. Ergodicity of $R$ means that $\{R^n 0: n\in \N\}$ 
is dense in $Z$, and thus 
$Z=(\ell Z) \cup (R (\ell Z)) \cup \dots \cup 
(R^{\ell-1}(\ell Z))$. Therefore, there exists 
$w\in Z$ and $j\in \{0,1,\dots,\ell-1\}$ such 
that $\pi(a)=R^j(\ell w)=\ell w + jb$, so that 
$$ \pi_1 \sigma_a= \int_Z \eta_{\ell z+\pi(a)}\ d\nu(z)= \int_Z \eta_{\ell (z+w)+jb}\ d\nu(z)=\int_Z \eta_{\ell z+jb}\ d\nu(z)=\int_{\ell Z+jb} \eta_{u}\ d(R^j \xi)(u).$$
Finally, $T^i \eta_u=\eta_{R^i u}$ and $R^{\ell}\xi = \xi$, which implies that
$$\frac{1}{\ell} T^{i}\pi_1 \sigma_a = \int_{\ell Z+jb} T^i \eta_{u}\ d\frac{1}{\ell}(R^j \xi)(u)=\int_{\ell Z+(j+i) b} \eta_{u}\ d\frac{1}{\ell}(R^{j+i} \xi)(u),$$
for each $i\in \{0,1,\ldots,\ell-1\}$, where $j+i$ is taken mod $\ell$ and thus 
$$\frac{1}{\ell}\left( \pi_1 \sigma_a + T\pi_1 \sigma_a +\dots +T^{\ell-1}\pi_1 \sigma_a \right)=  \int_{Z} \eta_{z}\ d\frac{1}{\ell}(\xi+R\xi+\dots + R^{\ell-1}\xi)(z)=\mu.$$
\end{proof}

\subsection{Support of the measure on $(\ell,m)-$Erd\H{o}s progressions}

We begin with some notational remarks. Recall our 
setting; We have an 
ergodic system $(X,\mu,T)$ with a continuous 
factor map $\pi$ to its Kronecker $(Z,\nu,R)$, say 
$R$ is the rotation by some $b\in Z$, and a 
generic point $a\in \textbf{gen}(\mu,T,\Phi)$ for 
some \Folner\ sequence $\Phi$. We write $z \to \eta_z$ for the disintegretion of $\mu$ over $\pi$.
Then if $k\in \N$ is any, we can define 
$X_i=\pi^{-1}(R^i(k Z))$, for $i=0,1,\dots,k-1$ 
and consider the ergodic components of $\mu$ for 
the transformation $T^{k}$ given by 
\begin{equation} \label{def of mu_i}
\mu_i=\int_Z \eta_{k z+ib}\ d\nu(z)=\int_{k Z+ib} \eta_u\ d(R^i \zeta)(u),
\end{equation}
where $\zeta$ is the Haar measure on $k Z$.
This subsection is devoted to showing that if
\begin{equation} \label{S}
S=\{(x_1,x_2) \in X \times X: (x_1,x_2) \in \supp(\lambda_{(x_1,x_2)})\},     
\end{equation}
then $\sigma_a(S)=1$. We begin with an extension 
of Proposition $3.10$ of \cite{kmrr2}.  

\begin{remark} \label{pi(x) notation}
In the above setting, for any $x\in X_i$ we have that 
$\pi(x)=k w+ib$, some $w\in Z$, $i=0,1,\ldots,k-1$.    
\end{remark}

For the next result we use notation from \cite{kmrr2}. In particular, $\mathcal{F}(X)$ 
denotes the family of non-empty and closed 
subsets of the compact metric space $(X,d)$, 
endowed with the Hausdorff metric, denoted by $\mathbf{H}$. 

\begin{proposition} \label{Lebesgue density theorem}
Fix a system $(X,\mu,T)$ and a continuous factor map $\pi$ to its 
Kronecker factor $(Z,\nu,R)$. Also fix $k\in \N$ and a disintegration $z \mapsto \eta_z$ over $\pi$. There is a sequence
$\delta(j) \to 0$, such that for $\mu$-almost every $x\in X$ (with 
$\pi(x)=k w+ib$ as in Remark \ref{pi(x) notation} ) the following holds: for every 
neighbourhood $U$ of $x$ we have 

\begin{equation} \label{eq quotient measures 2}
    \lim_{j\to \infty} \frac{m \left(\{z \in Z: \eta_{k z + ib} (U) > 0 \} \cap B(w, \delta(j))\right)}{m (B(w, \delta(j)))}=1.
\end{equation}   
\end{proposition}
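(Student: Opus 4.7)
The plan is to prove this via a Lebesgue density argument in the Kronecker factor $(Z,\nu)$, transferred back to $X$ via the factor map $\pi$ and the $k$-Lipschitz group endomorphisms $\phi_i \colon Z \to Z$, $\phi_i(w) = kw + ib$, which encode the parametrisation $\pi(x) = kw + ib$ of the sets $X_i$. The crucial observation is that a Lebesgue-type differentiation theorem applied on $Z$ to the pullbacks of $\{z:\eta_z(U)>0\}$ under these endomorphisms yields exactly the density statement sought, and that points $x$ in the support of their own fibre measure automatically belong to these pullbacks.

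First I would fix a countable basis $\{U_n\}_{n\in\N}$ of the topology of $X$ and, for each $n\in\N$ and $i\in\{0,1,\dots,k-1\}$, consider
\[
\tilde{A}_{n,i} := \{\, w\in Z : \eta_{kw+ib}(U_n)>0 \,\} = \phi_i^{-1}\bigl(\{z\in Z:\eta_z(U_n)>0\}\bigr),
\]
a measurable subset of $Z$. Applying a Lebesgue-type differentiation theorem in $(Z,\nu)$ along a common sequence $\delta(j)\to 0$ (obtained by a routine diagonalisation over the countably many pairs $(n,i)$) yields, for each such pair, a $\nu$-null set $N_{n,i}\subset \tilde{A}_{n,i}$ outside of which
\[
\lim_{j\to\infty}\frac{\nu\bigl(\tilde{A}_{n,i}\cap B(w,\delta(j))\bigr)}{\nu\bigl(B(w,\delta(j))\bigr)}=1.
\]

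The next step is to promote this $\nu$-a.e. statement on $w$ to a $\mu$-a.e. statement on $x$. Since the metric on $Z$ is translation invariant, $\phi_i$ is $k$-Lipschitz, and an analysis of closed subgroups in compact abelian groups shows that $\phi_i$ sends $\nu$-null sets to $\nu$-null sets (the image lies in $kZ+ib$; if $\nu(kZ)=0$ the claim is trivial, while if $kZ$ has positive measure it is open of finite index in $Z$ and $\phi_i$ becomes a finite cover of $kZ+ib$, locally a homeomorphism). Hence $M := \bigcup_{n,i}\phi_i(N_{n,i})$ has $\nu(M)=0$, so
\[
X^\ast := \bigl(X\setminus \pi^{-1}(M)\bigr)\cap \{\,x\in X : x\in \supp(\eta_{\pi(x)})\,\}
\]
has full $\mu$-measure, the second factor being a standard property of disintegrations. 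For any $x\in X^\ast$ with $\pi(x)=kw+ib$ and any open neighbourhood $U$ of $x$, pick a basis element $U_n$ with $x\in U_n\subset U$; then $\eta_{\pi(x)}(U_n)>0$, so $w\in \tilde{A}_{n,i}\setminus N_{n,i}$, and the density of $\tilde{A}_{n,i}$ at $w$ along the balls $B(w,\delta(j))$ is $1$. Since $\tilde{A}_{n,i}\subset\{z\in Z:\eta_{kz+ib}(U)>0\}$, the density of the latter set at $w$ is also $1$, which is precisely \eqref{eq quotient measures 2}.

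The main technical obstacle is verifying that $\phi_i$ sends $\nu$-null sets to $\nu$-null sets despite not being injective, which rests on the structural information about closed subgroups of compact metric abelian groups indicated above. A secondary technicality is arranging for a single sequence $\delta(j)\to 0$ to work for all countably many $(n,i)$ simultaneously; once pointwise Lebesgue differentiation is available at $\nu$-a.e.\ point for each $\tilde{A}_{n,i}$, this is handled by a routine diagonal argument.
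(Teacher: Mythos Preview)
Your proposal has a genuine gap at the step where you invoke ``a Lebesgue-type differentiation theorem in $(Z,\nu)$''. You are asking that for $\nu$-a.e.\ $w\in\tilde A_{n,i}$ the ratio $\nu(\tilde A_{n,i}\cap B(w,r))/\nu(B(w,r))$ tends to $1$ as $r\to 0$; this is the full Lebesgue density theorem, and it is \emph{not} freely available on a general compact abelian group with an invariant metric. The Kronecker factor can be infinite-dimensional (e.g.\ $Z=\T^{\N}$ with a minimal rotation), and no translation-invariant metric on such a group is doubling, so the standard route to Lebesgue density is blocked and your ``routine diagonalisation'' has nothing to diagonalise over. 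The paper avoids this obstacle entirely: instead of applying density to the fixed target sets, it uses Lusin's theorem to produce closed sets $Z_{i,j}\subset Z$ of measure $>1-2^{-j}$ on which the map $z\mapsto\supp(\eta_{kz+ib})$ is uniformly continuous in the Hausdorff metric. For sets of measure that close to $1$, a one-line Fubini/Markov argument using only translation invariance of the metric gives
\[
\nu\bigl\{w:\nu(Z_{i,j}\cap B(w,\delta))\le(1-\tfrac1j)\,\nu(B(w,\delta))\bigr\}\le j\,2^{-j}
\]
for \emph{every} $\delta>0$, and Borel--Cantelli then yields the full-measure set of good points. Working with $\supp(\eta_{kz+ib})$ in the Hausdorff metric, rather than with each $U_n$ separately, is precisely what allows a single Lusin set to handle all neighbourhoods at once.

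A secondary issue: the step you call the ``main technical obstacle'' is in fact false as written. For $Z=\T^{\N}$ and $k=2$ one has $kZ=Z$ (so $\nu(kZ)=1$), yet the kernel of $\phi_0$ is $\{0,\tfrac12\}^{\N}$, which is uncountable; $\phi_0$ is not a finite cover, and the null set $N=[0,\tfrac12)^{\N}$ satisfies $\phi_0(N)=\T^{\N}$. This particular point is, however, easily repaired: since $(\phi_i)_*\nu$ is normalised Haar on $kZ+ib$, the image $\phi_i\bigl(Z\setminus\bigcup_n N_{n,i}\bigr)$ has full measure in $kZ+ib$, so $\nu$-a.e.\ $z$ there has \emph{some} preimage $w$ avoiding all the $N_{n,i}$, which is all the proposition requires. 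But the first gap above remains and is the real obstruction.
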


\begin{proof}
We consider the maps $\Phi_{i}: Z \to \cF(X)$ 
defined by $\Phi_{i}(z)=\supp(\eta_{k z+ib})$, for any $z\in Z$ and $i \in \{0,1,2,\ldots,k-1\}$.
These 
maps are Borel measurable as the composition of 
three Borel measurable maps, $z \mapsto \ell z +ib$, $z\mapsto \eta_z$ and $\nu \mapsto \supp(\nu)$ (the latter is measurable in way of Lemma
$3.8$ of \cite{kmrr2}).

Just like in the proof of Proposition $3.10$ of \cite{kmrr2}, by Lusin's theorem, for each $j\in \N$ there is a closed set $Z_{i,j} \subset Z$ with $m(Z_{i,j})>1-2^{-j}$ and a $\delta(i,j)>0$ so that for all $z_1,z_2 \in Z_{i,j}$,
$$d(z_1,z_2)\leq \delta(i,j) \implies \mathbf{H}(\Phi_i(z_1) , \Phi_i(z_2))< \frac{1}{j}.$$
If we consider the sets
$$K_{i,j}=\{ z\in Z_{i,j} \colon m\left(B(z,\delta(i,j)) \cap Z_{i,j} \right) > \left( 1-\frac{1}{j} \right)m\left( B(z,\delta(i,j) \right) \}$$
and let 
$K_{i}=\bigcup_{M\geq 1} \bigcap_{j\geq M} K_{i,j}$ 
it follows by the same argument as in the proof 
of Proposition $3.10$ in \cite{kmrr2} that $m(K_{i})=1$. 

Next, we let $L'_{i}=\{x\in X: x\in \supp(\eta_{\pi(x)}) \} \cap \pi^{-1}(k K_{i}+ib)$. By the above we see that $m(k K_{i})=m(k Z)$ and since $\mu(\{x\in X: x\in \supp(\eta_{\pi(x)}) \})=1$ (see Lemma $3.9$ in \cite{kmrr2}) it follows that $\mu_i(L'_i)=1$. Thus, setting $L'=L'_0 \cup L'_1 \cup \dots \cup L'_{k-1}$ we have $\mu(L')=1$. 

Fix $x\in L'$, and $i\in \{0,1,\ldots,k-1\}$ so that 
$x\in L'_i$ and let $U$ be an open neighbourhood of $x$. In this case $\pi(x)=k w+ib$, some $w\in K_i$, because $\pi(x) \in k K_i+ib$. Now, as $w\in K_i$ and $U$ is open there is $j_0 \in \N$ such that $w\in K_{i,j}$ and $B(x,1/j) \subset U$, for all $j\geq j_0$. We claim that 
\begin{equation} \label{support inclusion}
B(w,\delta(i,j)) \cap Z_{i,j} \subset H:= \{z\in Z: \eta_{kz+ib}(U)>0\}.   \end{equation}
Indeed, let $w' \in B(w,\delta(i,j)) \cap Z_{i,j}$. As $d(w',w)<\delta(i,j)$ and $w',w$ are continuity points for $\Phi_i$, we see that $\mathbf{H}(\Phi_i(w),\Phi_i(w'))<1/j.$ Then, because $x\in \Phi_i(w)$, there exists $x'\in \Phi_i(w')$ with $d(x,x')<1/j$. This of course implies that $x' \in U$ and so $x'\in U \cap \Phi_i(w')$. As $\Phi_i(w')=\supp(\eta_{kw'+ib})$ it follows that $\eta_{kw'+ib}(U)>0$, that is $w' \in H$. 
As $w\in K_{i,j}$ it follows from \eqref{support inclusion} and by the construction of $Z_{i,j}$ that
$$\frac{m(H\cap B(w,\delta(i,j))}{m(B(w,\delta(i,j))} \geq 1-\frac{1}{j},$$
for all $j\geq j_0$. This then implies that
$$\lim_{j\to \infty} \frac{m(H\cap B(w,\delta(i,j))}{m(B(w,\delta(i,j))} = 1.$$
Setting $\delta(j)=\min_{\{i=0,1,\ldots,k-1\}} \delta(i,j)$, 
for each $j\in \N$, gives \eqref{eq quotient measures 2}.
\end{proof}

We are now in the position to prove the main 
result of this section. 

\begin{proposition} \label{support of sigma}
Let $S=\{(x_1,x_2) \in X \times X: (x_1,x_2) \in 
\supp (\lambda_{(x_1,x_2)})\}$ as in \eqref{S}. 
Then $\sigma_a(S)=1.$     
\end{proposition}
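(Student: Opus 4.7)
The plan is to show that for $\sigma_a$-almost every $(x_1,x_2)\in X\times X$ and every basic open neighbourhood $U\times V$ of $(x_1,x_2)$, $\lambda_{(x_1,x_2)}(U\times V)>0$. Since $X\times X$ is compact metric, checking this on a countable basis will force $(x_1,x_2)\in\supp(\lambda_{(x_1,x_2)})$. Unravelling \eqref{lambda},
\[
\lambda_{(x_1,x_2)}(U\times V) = \int_Z \eta_{\ell z+\pi(x_1)}(U)\cdot\eta_{mz+\pi(x_2)}(V)\,d\nu(z),
\]
so the task reduces to verifying that the set $D:=\{z\in Z:\eta_{\ell z+\pi(x_1)}(U)>0\text{ and }\eta_{mz+\pi(x_2)}(V)>0\}$ has positive $\nu$-measure.

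The key tool is \cref{Lebesgue density theorem}, which I would apply simultaneously with $k=\ell$ (for $x_1$) and $k=m$ (for $x_2$). Rerunning its proof jointly---applying Lusin's theorem to the finitely many maps $\Phi_i$ arising for both values of $k$ and taking the minimum over the finitely many resulting scales---produces a single sequence $\delta(j)\to 0$ together with a full-$\mu$-measure set $L'\subset X$ with the following property: for every $x\in L'$, there exist $w\in Z$ and $i\in\{0,\dots,\ell-1\}$ with $\pi(x)=\ell w+ib$ such that for every neighbourhood $W$ of $x$,
\[
\lim_{j\to\infty}\frac{\nu(\{z:\eta_{\ell z+ib}(W)>0\}\cap B(w,\delta(j)))}{\nu(B(w,\delta(j)))}=1,
\]
and the analogous statement holds with $\ell$ replaced by $m$ (and a different pair $(w,i)$ with $i\in\{0,\dots,m-1\}$).

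By \cref{projections of sigma}, the projections $\pi_1\sigma_a$ and $\pi_2\sigma_a$ are absolutely continuous with respect to $\mu$, hence $\sigma_a(L'\times L')=1$; combined with $\sigma_a(P)=1$ from the proof of \cref{equality of lambdas}, we may fix a $\sigma_a$-typical pair $(x_1,x_2)$ satisfying both Lebesgue density statements and also $\pi(x_1)=\ell w+\pi(a)$, $\pi(x_2)=(\ell+m)w+\pi(a)$ for some $w\in Z$. Writing $\pi(x_1)=\ell w_1+i_1 b$ and $\pi(x_2)=mw_2+i_2 b$, the identities $\eta_{\ell z+\pi(x_1)}=\eta_{\ell(z+w_1)+i_1 b}$ and $\eta_{mz+\pi(x_2)}=\eta_{m(z+w_2)+i_2 b}$ together with translation-invariance of $\nu$ recenter both density statements at $0\in Z$:
\[
\frac{\nu(\{z:\eta_{\ell z+\pi(x_1)}(U)>0\}\cap B(0,\delta(j)))}{\nu(B(0,\delta(j)))}\to 1\text{ and }\frac{\nu(\{z:\eta_{mz+\pi(x_2)}(V)>0\}\cap B(0,\delta(j)))}{\nu(B(0,\delta(j)))}\to 1.
\]
A union bound on the complements then gives $\nu(D^c\cap B(0,\delta(j)))/\nu(B(0,\delta(j)))\to 0$, and since $B(0,\delta(j))$ is a nonempty open set of positive Haar measure, $\nu(D)>0$ for $j$ sufficiently large, as required.

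The main technical obstacle is producing the single sequence $\delta(j)$ that works for both values of $k$ at the same scale. A direct double application of \cref{Lebesgue density theorem} yields two potentially incomparable sequences $\delta_\ell(j),\delta_m(j)$, and on a general compact abelian group $Z$ one cannot freely transfer a density-one statement from one scale to a strictly smaller one without a doubling-type hypothesis. The resolution is to reopen the proof of \cref{Lebesgue density theorem}: since Lusin's theorem is only applied to finitely many maps $\Phi_i$ per $k$, a joint application to the maps for $k\in\{\ell,m\}$ delivers the desired common $\delta(j)$ as the minimum of finitely many Lusin scales.
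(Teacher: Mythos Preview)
Your proof is correct and follows essentially the same route as the paper's: apply \cref{Lebesgue density theorem} with $k=\ell$ and $k=m$ to obtain full-$\mu$-measure sets of good points, use \cref{projections of sigma} to pass to a $\sigma_a$-full set of pairs, and then intersect the two density-$1$ sets at a common scale. The paper centers the argument at $w_0$ (translating the second density statement by $w_1-w_0$) and uses an explicit threshold of $5/6$, whereas you recenter both at $0$ and take a limit---these are cosmetic differences. Your observation about needing a single scale sequence $\delta(j)$ is exactly right, and your resolution (running Lusin jointly on the finitely many maps $\Phi_i$ for both $k$-values) is precisely what the paper does implicitly when it writes ``Using Proposition \ref{Lebesgue density theorem} above for $k=\ell$ and $k=m$, we find a sequence $\delta(j)\to 0$\ldots''; indeed, the last line of the proof of \cref{Lebesgue density theorem} already takes a minimum over the finitely many $\delta(i,j)$. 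Your invocation of $\sigma_a(P)=1$ is harmless but unnecessary---you never use the constraint on $\pi(x_1),\pi(x_2)$ coming from $P$.
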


\begin{proof}
Using Proposition \ref{Lebesgue density theorem} above for $k=\ell$ and $k= m$, we find a 
sequence $\delta(j) \to 0$ and two sets $L,L' \subset X$ such that 
each point $x'\in L'$ satisfies \eqref{eq quotient measures 2} with $k=\ell$ and each $x\in L$ satisfies \eqref{eq quotient measures 2} with 
$k=m$. Now, $\mu(L)=\mu(L')=1$ and using Lemma \ref{projections of sigma} we see that $\sigma_a(L' \times L)=1$. To see this simply note that $L' \times L=(L' \times X) \cap (X \times L)$ and $\pi_1\sigma_a(L')=\pi_2\sigma_a(L)=1$.
We have thus reduced matters to showing that $L' \times L \subset S$. 

To this end, let $(x_0,x_1) \in L' \times L$ and $U_0, U_1$ be neighbourhoods of $x_0, x_1$ respectively. It suffices to verify that $\lambda_{(x_0,x_1)}(U)>0,$ where $U=U_0 \times U_1$. Writing $\pi(x_0)=\ell w_0+i_0b$ and $\pi(x_1)=mw_1+i_1b$ as in Remark \ref{pi(x) notation}, we have that
 \begin{align*}
        \lambda_{(x_0,x_1)} &= \int_Z \eta_{\ell z+\ell w_0+i_0b} \times \eta_{mz + mw_1+i_1b }  \diff \nu(z) 
    \end{align*}
and making the change of variables 
$z \mapsto z-w_0$ we see that 
\begin{align*}
        \lambda_{(x_0,x_1)}(U) &= \int_Z \eta_{\ell z+i_0b}(U_0) \times \eta_{mz + m(w_1-w_0)+i_1b }(U_1) \diff \nu(z). 
\end{align*}
Now, as $x_0 \in L'$ and $x_1 \in L$, there is $\delta>0$ such that
    \begin{equation} \label{eq half of the ball 0} 
        \frac{m \left(\{z \in Z: \eta_{\ell z+i_0b} (U_0) > 0 \} \cap B(w_0, \delta)\right)}{m (B(w_0, \delta))}\geq \frac{5}{6}
    \end{equation}
    and also 
    \begin{equation*} 
        \frac{m \left(\{z \in Z: \eta_{mz+i_1b} (U_1) > 0 \} \cap B(w_1, \delta)\right)}{m (B(w_1, \delta))}\geq \frac{5}{6}.
    \end{equation*}
    But $\{z \in Z: \eta_{mz+i_1b} (U_1) > 0 \} - (w_1-w_0) = \{z \in Z: \eta_{mz+m(w_1-w_0)+i_1b} (U_1) > 0 \}$ and so we get 
 \begin{equation} \label{eq half of the ball 1}
        \frac{m \left(\{z \in Z: \eta_{mz+m(w_1-w_0)+i_1b} (U_1) > 0 \} \cap B(w_0, \delta)\right)}{m (B(w_0, \delta))}\geq \frac{5}{6}.
    \end{equation}
    Finally, we consider the set $G$ defined by
    $$W=\{z\in Z: \eta_{\ell z+i_0b}(U_0)>0\ \text{and}\ \eta_{mz+m(w_1-w_0)+i_1b}(U_1)>0 \}.$$
    It is clear from \eqref{eq half of the ball 0} and \eqref{eq half of the ball 1} that $W$ contains at least half of the ball $B(w_0,\delta)$ and thus $m(W)>0.$ As for all $z \in W$ we have
    \begin{equation*}
        \eta_{\ell z+ i_0b} \times \eta_{mz + m(w_1-w_0)+ i_1b} (U_0 \times U_1) >0
    \end{equation*}
    and $m(W)>0$, it follows that $\lambda_{(x_0,x_1)}(U_0\times U_1) >0$, as desired.
    \end{proof}

\subsection{Proofs of correspondence principles} \label{FCP section for k in Q}

We move on to prove the correspondence principle-type of 
results stated in Lemmas \ref{FCP shift general k} and \ref{FCP no shift general k}. 
We start with the latter as it has a slightly simpler proof. 

Recall that $\Sigma$ denotes the space $\{0,1\}^{\Z}$ and is endowed with
the product topology so that it is compact metrizable. 
We also let $S \colon \Sigma \to \Sigma$ denote the shift transformation given by $S(x(n))=x(n+1)$, for any $n\in \Z$, $x=(x(n))_{n\in \Z} \in \Sigma$. We also recall the statements for convenience.

\begin{lemma**}
Let $A\subset \N$ and $\ell,m\in \N$ with $k=m/\ell$. Then, there exists an ergodic system $(\Sigma 
\times \Sigma, \mu, S \times S)$, an open set $E\subset \Sigma$, a pair of points $a'',a\in \Sigma$ and a F\o lner sequence 
$\Phi$, such that $(a'',a)\in \textbf{gen}(\mu,\Phi)$ and 
\begin{equation*} 
(\ell+m)\mu(\Sigma \times E) + \ell \mu(E \times \Sigma) \geq  (2\ell+m)\left( (k+1)\cdot \overline{\diff}(A) - k \right).
\end{equation*}
It also holds that $A = \{ n \in \N: S^n a \in E \}$ and $A/(\ell+m ) =\{n\in \N: S^{\ell n}a'' \in E\}$. 
\end{lemma**}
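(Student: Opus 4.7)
The plan is to construct the desired ergodic system, points $a,a''$, and F\o lner sequence explicitly, mimicking the classical Furstenberg correspondence principle (Lemma \ref{FCP}) in a joint symbolic setting on $\Sigma \times \Sigma$. Set $\alpha := \overline{\diff}(A)$, fix the clopen set $E := \{x \in \Sigma : x(0) = 1\}$, and take $a \in \Sigma$ given by $a(n) = \mathbf{1}_A(n)$ for $n \geq 0$ and $a(n) = 0$ for $n < 0$, so that the first structural identity $A = \{n \in \N : S^n a \in E\}$ is immediate. For $a''$, the second structural identity forces $a''(\ell j) = \mathbf{1}_A((\ell+m)j)$ for every $j \geq 0$; the free positions of $a''$ can be chosen at will, so I would set $a''(n) = 1$ for positive $n$ not divisible by $\ell$, and $a''(n) = 0$ for $n < 0$. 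The positive non-multiples of $\ell$ are all set to $1$ precisely because this maximises the density of the $1$-level set of $a''$, which is what controls $\mu(E \times \Sigma)$.

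For the F\o lner sequence, pick a subsequence $(T_N)$ of $\N$ such that $|A \cap [1,(\ell+m)^{2} T_N]|/((\ell+m)^{2} T_N) \to \alpha$, which is possible by the definition of upper density. Let $\Phi_N := [1,(\ell+m)\ell T_N]$, evidently a F\o lner sequence in $\N$. A direct counting argument then gives, in the limit,
$$\frac{|A \cap \Phi_N|}{|\Phi_N|} \geq (k+1)\alpha - k \quad \text{and} \quad \frac{|\{n \in \Phi_N : a''(n) = 1\}|}{|\Phi_N|} \geq (k+1)\alpha - k.$$
The first uses $|A \cap \Phi_N| \geq |A \cap [1,(\ell+m)^{2} T_N]| - (\ell+m)m T_N$ (since $(\ell+m)^{2} T_N - |\Phi_N| = (\ell+m) m T_N$), divided by $|\Phi_N| = (\ell+m)\ell T_N$. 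For the second, the $(\ell+m)(\ell-1) T_N$ non-multiples of $\ell$ in $\Phi_N$ all satisfy $a''(n) = 1$, and the multiples $n=\ell j$ with $(\ell+m) j \in A$ account for at least $|A \cap [1,(\ell+m)^{2} T_N]| - (\ell+m)(\ell+m-1) T_N$ further terms (using that the non-multiples of $\ell+m$ in $[1,(\ell+m)^{2} T_N]$ number $(\ell+m)(\ell+m-1) T_N$). Passing to a further subsequence so that the empirical averages $\nu_N := |\Phi_N|^{-1} \sum_{n \in \Phi_N} \delta_{(S \times S)^n (a'', a)}$ converge weakly$^*$ to some $(S \times S)$-invariant measure $\mu_0$, I obtain $(a'', a) \in \textbf{gen}(\mu_0, \Phi)$ and, summing the two bounds,
$$(\ell+m) \mu_0(\Sigma \times E) + \ell\, \mu_0(E \times \Sigma) \geq \bigl((\ell+m)+\ell\bigr)\bigl((k+1)\alpha - k\bigr) = (2\ell+m)\bigl((k+1)\alpha - k\bigr).$$

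The main obstacle is then to replace $\mu_0$ by an ergodic measure $\mu$ for which $(a'',a)$ remains generic along some F\o lner subsequence and the same lower bound holds. Writing $\mu_0 = \int \mu_\omega\, dP(\omega)$ for the ergodic decomposition, the identity $(\ell+m)\mu_0(\Sigma \times E) + \ell \mu_0(E \times \Sigma) = \int \bigl[(\ell+m)\mu_\omega(\Sigma \times E) + \ell \mu_\omega(E \times \Sigma)\bigr]\,dP(\omega)$ yields, by averaging, an ergodic component $\mu_\omega$ satisfying the same bound. The more delicate point is extracting a F\o lner subsequence along which $(a'',a)$ is generic for this specific ergodic component while preserving both structural identities; this is precisely the technical step underpinning the ergodic version of the classical correspondence principle \cite[Theorem 2.10]{kmrr1}, and it applies verbatim in the joint system $(\Sigma \times \Sigma, S \times S)$. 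Once this ergodic extraction is carried out, the lemma is established.
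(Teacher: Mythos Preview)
Your proof is correct and follows essentially the same route as the paper's: the same clopen $E$, the same point $a''$ (with $1$'s at the non-multiples of $\ell$ to maximise $\mu(E\times\Sigma)$), the same truncated-interval F{\o}lner averaging yielding the twin lower bounds $\mu_0(\Sigma\times E),\,\mu_0(E\times\Sigma)\ge (k+1)\alpha-k$, and the same passage to an ergodic component supported on the orbit closure of $(a'',a)$ via the standard argument from \cite{Fur1,kmrr1}. The only differences are cosmetic (you parametrise the F{\o}lner intervals as $[1,(\ell+m)\ell T_N]$ rather than $[1,\lfloor N_i/(k+1)\rfloor]$, which amounts to the same thing), and your remark about ``preserving both structural identities'' is superfluous since those identities depend only on $a,a'',E$ and not on $\mu$ or $\Phi$.
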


\begin{proof}
Let $A \subset \N$. By definition, there exists a sequence  $(N_i)$ of positive integers such that 
$$\overline{\diff}(A)=\lim_{i\to \infty} \frac{\left|A \cap [1,N_i]\right|}{N_i}.$$ 
We let $a\in \Sigma = \{0,1\}^{\Z}$ be the indicator of $A$, that is,
\begin{equation*}
    a(n) = 
    \left\{ 
    \begin{array}{cc}
        1 & \text{if $ n\in A$ } \\
        0 & \text{otherwise}.
    \end{array}
    \right.
\end{equation*}
Moreover, we let $a''\in \Sigma$ be defined by 
\begin{equation*}
    a''(n) = 
    \left\{ 
    \begin{array}{cc}
        a((\ell+m )i) & \text{if $ n=\ell i$,\ some $i\in \N$ } \\
        1 & \text{otherwise}.
    \end{array}
    \right.
\end{equation*}
If $E=\{x\in \Sigma\colon x(0)=1\}$, we observe 
that $E$ is clopen in $\Sigma$, $A=\{n\in \N\colon S^na \in E\}$ and also
$$\{n\in \N: S^{\ell n}a'' \in E\} = \{n\in \N: S^{(\ell+m )n}a\}= A/(\ell+m ).$$
Now let $N'_i=\lfloor N_i/(k+1)\rfloor$, for all $i\in \N$, $i\geq k+1$, and consider the sequence of Borel probability measures $(\mu_i)$ on $\Sigma \times \Sigma$ given by $$\mu_i=\frac{1}{N'_i} \sum_{n=1}^{N'_i} \delta_{(S \times S)^n(a'',a)}.$$
Letting $\mu'$ be a weak* accumulation point of $(\mu_i)$ 
we obtain an $(S^{(q+1)} \times S)$-invariant measure. 
It follows by definition that for all $i\in \N$, $i\geq k+1$, we have
\begin{equation} \label{ineq: 11}  
\mu_i(\Sigma \times E)=\frac{1}{N'_i} \sum_{n=1}^{N'_i} \delta_{S^na}(E) = \frac{\left| A\cap [1, N'_i ] \right| }{N'_i}.
\end{equation}
For any such $i$ we also have that
\begin{align}
    \mu_i(\Sigma \times E)&=\frac{1}{N'_i} \sum_{n=1}^{N'_i} \delta_{S^na}(E) = \frac{1}{N'_i} \left( \sum_{n=1}^{N_i} \delta_{S^na}(E) - \sum_{n=N'_i+1}^{N_i} \delta_{S^na}(E)  \right) \nonumber \\ 
& \geq \frac{k+1}{N_i} \left(  \sum_{n=1}^{N_i} \delta_{S^na}(E)- \frac{k}{k+1}N_i+o_{N_i\to \infty}(N_i) \right)= (k+1) \frac{\left|A \cap  [ 1, N_i ] \right|}{N_i}-k+o_{N_i\to \infty}(1) \label{ineq: 8}
\end{align} 
Taking limits in \eqref{ineq: 8} as $i\to \infty$ we have, by the definition of $\mu'$, the fact that $E\subset X$ is clopen and the choice of $(N_i)$ that
\begin{equation} \label{ineq: 12} 
\mu'(\Sigma \times E) \geq (k+1) \cdot \overline{\diff}(A)-k,
\end{equation}
On the other hand, for any $i\in \N$, $i\geq k+1$, we have that
\begin{align} 
\mu_i(E \times \Sigma )&=\frac{1}{N'_i} \sum_{n=1}^{N'_i} \delta_{S^{n}a''}(E) = \frac{1}{N'_i} \left( \sum_{n=1,n\notin \ell \N}^{N_i'} \delta_{S^{n}a''}(E) +  \sum_{n=1}^{\lfloor N'_i/\ell \rfloor }\delta_{S^{\ell n}a''}(E) \right)  \nonumber \\
&= \frac{1}{N_i'} \left( N_i'-\frac{N_i'}{\ell}+ \sum_{n=1}^{\lfloor N'_i/\ell \rfloor }\delta_{S^{(\ell+m ) n}a}(E) \right) \geq 1-\frac{1}{\ell}+ \frac{1}{N_i'} \sum_{n=1}^{\lfloor N_i/(\ell+m) \rfloor -2}\delta_{S^{(\ell+m ) n}a}(E)  \nonumber \\
& = 1-\frac{1}{\ell} + (k+1)\frac{|A\cap (\ell+m )\N \cap [1,N_i]|}{N_i} + o_{N_i\to \infty}(1) \label{ineq: 13},
\end{align}
where in the inequality we used that $\ell+m=\ell(k+1)$. Now, observe that 
$$|A\cap (\ell+m )\N \cap [1,N_i]| = |A\cap [1,N_i]| - |A\cap \left( \N\setminus (\ell+m )\N \right) \cap [1,N_i]| \geq |A\cap [1,N_i]|-N_i+\frac{N_i}{(\ell+m )},$$
so that
$$\liminf_{i\to \infty} \frac{|A\cap (\ell+m )\N \cap [1,N_i]|}{N_i} \geq \overline{\diff}(A)-1+\frac{1}{(\ell+m )}.$$
Using this and taking limits as $i\to \infty$ in \eqref{ineq: 13} we see that 
\begin{equation} \label{ineq: 14}
\mu'(E \times \Sigma ) \geq 1-\frac{1}{\ell}+(k+1)\left(\overline{\diff}(A)-1+\frac{1}{\ell+m }\right)=(k+1)\cdot \overline{\diff}(A)-k.
\end{equation}
Combining \eqref{ineq: 12} and \eqref{ineq: 14} we have that 
$$(\ell+m)\mu'(\Sigma \times E) + \ell \mu'(E \times \Sigma ) \geq (2\ell+m)\left( (k+1)\cdot \overline{\diff}(A) - k \right).$$
Although $\mu'$ is not necessarily ergodic, we 
can use its ergodic 
decomposition to find an $(S^{(q+1)} \times S)$-ergodic 
component of it, call it $\mu$, such that  
$$(\ell+m)\mu(\Sigma \times E) + \ell \mu(E \times \Sigma ) \geq (2\ell+m)\left( (k+1)\cdot \overline{\diff}(A) - k \right).$$
Without loss of generality we may 
assume that 
$\mu$ is supported on the orbit closure of $(a,a)$, 
since this holds for $\mu'$ by construction. Then by a standard argument (see \cite[ Proposition 3.9]{Fur1}) we see there is a F\o lner sequence $\Phi$ in $\N$, such that $(a,a) \in \textbf{gen}(\mu,\Phi)$. This completes the proof. 
\end{proof}

\begin{lemma*}
 Let $A\subset \N$ and $\ell,m \in \N$ with $k=m/\ell$ and let $q=\lceil k \rceil$, the ceiling of $k$. 
 Then, there exist an ergodic system $(\Sigma 
 \times \Sigma, \mu, S^{(q+1)} \times S)$, an open set $E\subset \Sigma$, a pair of points $a,a'\in \Sigma$ and a F\o lner sequence 
 $\Phi$, such that $(a',a)\in \textbf{gen}(\mu,\Phi)$ and 
 \begin{equation*} (\ell+m)\mu(\Sigma \times E) + \ell \sum_{j=0}^{q} \mu(S^{-j}E \times \Sigma) \geq (\ell+m) \left((k+1)\cdot \overline{\diff}(A)-k \right)+\ell (k+1)\cdot \overline{\diff}(A)+\ell(q-k).
 \end{equation*}
It also holds that $A=\{n\in \N: S^n a \in E \}$ and $(A-j)/(\ell+m ) =\{n\in \N: S^{(q+1)\ell n+j}a' \in E\}$, for each $j=0,1,\dots,\ell+m -1$, where $(A-j)/(\ell+m )=\{n\in \N: n(\ell+m )+j\in A\}$.
\end{lemma*}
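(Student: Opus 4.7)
The plan is to follow the strategy used in the preceding "no shift" lemma, modifying only the construction of the auxiliary point $a'\in\Sigma$ so that it simultaneously encodes all $\ell+m$ shifted sets $(A-j)/(\ell+m)$ within a single $S^{q+1}$-orbit. As before, let $a\in\Sigma$ be the indicator of $A$ and set $E=\{x\in\Sigma : x(0)=1\}$, which is clopen and gives $A=\{n\in\N : S^n a\in E\}$. I will define $a'$ block-by-block in blocks of length $(q+1)\ell$: for each $t\in\N_0$, position $(q+1)\ell t + j$ equals $a((\ell+m)t+j)$ when $j\in\{0,\ldots,\ell+m-1\}$, and equals $1$ in the remaining $(q+1)\ell-(\ell+m)=\ell(q-k)$ positions. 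Since $q\geq k$ this padding length is non-negative, and the identity $(A-j)/(\ell+m)=\{n\in\N : S^{(q+1)\ell n+j}a'\in E\}$ holds directly from the definition for every $j\in\{0,1,\ldots,\ell+m-1\}$.

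Next, choose $N_i\to\infty$ along which $\overline{\diff}(A)=\lim|A\cap[1,N_i]|/N_i$, set $N_i'=\lfloor N_i/(k+1)\rfloor$, and form the empirical averages
\[
\mu_i = \frac{1}{N_i'}\sum_{n=1}^{N_i'}\delta_{(S^{q+1}\times S)^n(a',a)}.
\]
Let $\mu'$ be a weak$^*$ accumulation point, necessarily $(S^{q+1}\times S)$-invariant and supported in the orbit closure of $(a',a)$. Since $\mu_i(\Sigma\times E)=\frac{1}{N_i'}\sum_{n=1}^{N_i'}\delta_{S^n a}(E)$ depends only on the second coordinate, the calculation performed verbatim in the preceding lemma yields $\mu'(\Sigma\times E)\geq(k+1)\overline{\diff}(A)-k$, which handles the first term of the desired inequality.

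For the sum over $j$, I will use that the integers $(q+1)n+j$ with $n\in\{1,\ldots,N_i'\}$ and $j\in\{0,\ldots,q\}$ enumerate $[q+1,(q+1)(N_i'+1))$, and therefore
\[
\sum_{j=0}^{q}\mu_i(S^{-j}E\times\Sigma) = \frac{1}{N_i'}\,\bigl|\{r\in[1,(q+1)N_i') : a'(r)=1\}\bigr| + o(1).
\]
The block structure of $a'$ means that $[0,(q+1)N_i')$ contains approximately $N_i'/\ell$ complete blocks, each contributing $|A\cap[(\ell+m)t,(\ell+m)(t+1))|$ encoded ones and $\ell(q-k)$ padded ones. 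Summing and using $(k+1)N_i'=N_i+O(1)$, then passing to the limit, yields $\sum_{j=0}^{q}\mu'(S^{-j}E\times\Sigma)\geq(k+1)\overline{\diff}(A)+(q-k)$. Taking the $(\ell+m)$- and $\ell$-weighted combination of the two bounds reproduces the inequality in the statement for $\mu'$.

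To finish, since the weighted functional is affine in $\mu$, the ergodic decomposition of $\mu'$ contains an $(S^{q+1}\times S)$-ergodic component $\mu$ satisfying at least the same lower bound; its support can be arranged to lie in the orbit closure of $(a',a)$ exactly as in the preceding proof. The standard argument (\cite[Proposition 3.9]{Fur1}) then supplies a F\o lner sequence $\Phi$ with $(a',a)\in\textbf{gen}(\mu,\Phi)$. I expect the only points requiring care to be bookkeeping: the padding count $\ell(q-k)$ per block is precisely what produces the extra $\ell(q-k)$ term in the statement, while the discrepancy between the $(q+1)$-step sum and the $(q+1)\ell$-step block structure of $a'$ is an $O(1/N_i')$ error that vanishes in the limit.
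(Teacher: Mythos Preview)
Your proposal is correct and follows essentially the same approach as the paper: the definition of $a'$, the choice of empirical measures along $N_i'=\lfloor N_i/(k+1)\rfloor$, the bound $\mu'(\Sigma\times E)\geq (k+1)\overline{\diff}(A)-k$, and the block-counting argument giving $\sum_{j=0}^{q}\mu'(S^{-j}E\times\Sigma)\geq (k+1)\overline{\diff}(A)+(q-k)$ all coincide with the paper's proof, differing only in notation and exposition.
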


\begin{proof}
Let $A \subset \N$. As before, let $(N_i)$ be a sequence of integers such that 
$$\overline{\diff}(A)=\lim_{i\to \infty} \frac{\left|A \cap [1,N_i]\right|}{N_i}$$ 
and $a\in \Sigma = \{0,1\}^{\Z}$ be the indicator of $A$. This time we let $a'\in \Sigma$ be defined by 
\begin{equation*}
    a'(n) = 
    \left\{ 
    \begin{array}{cc}
        a(\ell(k+1)i+j) & \text{if $ n=\ell(q+1)i+j$,\ some $i\in \N$, $j\in \{0,1,\dots,\ell(k+1)-1\}$ } \\
        1 & \text{otherwise}.
    \end{array}
    \right.
\end{equation*}
Observe that $a'$ is well-defined because $\ell(k+1) \leq \ell(q+1)$.
If $E=\{x\in \Sigma\colon x(0)=1\}$, we observe 
that $E$ is clopen in $\Sigma$, $A=\{n\in \N\colon S^na \in E\}$ and, since $(\ell+m )n=\ell(k+1)n$,
$$\{n\in \N: S^{(q+1)\ell n+j}a' \in E\} = \{n\in \N: S^{(\ell+m )n+j}a\}= (A-j)/(\ell+m ),$$ for each $j\in \{0,1,\dots,\ell+m -1\}$.
Now let $N'_i=\lfloor N_i/(k+1)\rfloor$, for all $i\in \N$ and consider the sequence of Borel probability measures $(\mu_i)$ on $\Sigma \times \Sigma$ given by $$\mu_i=\frac{1}{N'_i} \sum_{n=1}^{N'_i} \delta_{(S^{(q+1)} \times S)^n(a',a)},$$
if $i\geq k+1$, 
and let $\mu'$ be a weak* accumulation point of $(\mu_i)$. 
As before we have that
\begin{equation} \label{ineq: 15} \ 
\mu'(\Sigma \times E) \geq (k+1) \cdot \overline{\diff}(A)-k,
\end{equation}
On the other hand, for any $i\in \N$, $i\geq k+1$, we have that
\begin{equation}
\sum_{j=0}^{q} \mu_i(S^{-j}E \times \Sigma) = \frac{1}{N'_i} \sum_{j=0}^{q} \sum_{n=1}^{N'_i} \delta_{S^{(q+1)n}a'}(S^{-j}E) = \frac{k+1}{N_i} \left( \sum_{n=1}^{N_i'(q+1)} \delta_{S^{n}a'}(E)\right)  +o_{N_i\to \infty}(1). \label{ineq: 16}
\end{equation} 
We can relate this with the density of $A$ because 
\begin{align*}
\sum_{n=1}^{N_i'(q+1)} \delta_{S^{n}a'}(E) & = \sum_{n=1}^{\lfloor N_i/(\ell(k+1)) \rfloor} \sum_{j=0}^{\ell(k+1)-1} \delta_{S^{\ell(q+1)n+j}a'}(E) + \sum_{n=1}^{\lfloor N_i/(\ell(k+1)) \rfloor} \sum_{j=\ell(k+1)}^{\ell(q+1)-1} 1 + o_{N_i\to \infty}(N_i) \\ 
& = \sum_{n=1}^{\lfloor N_i/(\ell(k+1)) \rfloor} \sum_{j=0}^{\ell+m -1} \delta_{S^{(\ell+m )n+j}a}(E) +  \frac{N_i}{\ell(k+1)}\left( \ell(q+1)-\ell(k+1) \right) + o_{N_i\to \infty}(N_i) \\
& = \sum_{n=1}^{N_i} \delta_{S^n a} +N_i\frac{q-k}{k+1} + o_{N_i\to \infty}(N_i)\\ 
& = |A \cap [1, N_i ]| + N_i\frac{q-k}{k+1} + o_{N_i\to \infty}(N_i),
\end{align*}
where we have used, when convenient, the equality $\ell+m =\ell(k+1)$ and the fact that $q=\lceil k \rceil \geq k$. Using this in \eqref{ineq: 16} and taking limits as $i\to \infty$ we see that 
\begin{equation}\label{ineq: 17}
\sum_{j=0}^{q} \mu'(S^{-j}E \times \Sigma) \geq (k+1) \cdot \overline{\diff}(A)+ q-k.    
\end{equation}
It follows by combining \eqref{ineq: 15} and \eqref{ineq: 17} that 
$$(\ell+m)\mu'(\Sigma \times E) + \ell \sum_{j=0}^{q} \mu'(S^{-j}E \times \Sigma) \geq (\ell+m) \left((k+1)\cdot \overline{\diff}(A)-k \right)+\ell (k+1)\cdot \overline{\diff}(A)+\ell(q-k).$$
The rest of the proof follows the same arguments as the previous 
one.
\end{proof}

\section{Proofs of main dynamical theorems} \label{main proofs}

For the proof of our main result we need a lemma 
that guarantees $(T^{\ell} \times T^m)$-generic 
points, for almost all the measures 
$\lambda_{(a,x_1)}$ with prescribed first
coordinate.

\begin{lemma} \label{full measure generic points}
If $(X,\mu,T)$ is an ergodic system and $a\in \textbf{gen}(\mu,\Phi)$ for some F\o lner sequence $\Phi$, then for $\mu$-almost every $x_1 \in X$ we have that $(a,x_1)$ is $(T^{\ell} \times T^m)$-generic for $\lambda_{(a,x_1)}$.   
\end{lemma}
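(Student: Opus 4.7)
The plan is to establish the required weak-$*$ convergence
\[
\frac{1}{|\Phi_N|}\sum_{n\in \Phi_N}\delta_{(T^{\ell n}a,\, T^{m n}x_1)} \longrightarrow \lambda_{(a,x_1)}
\]
for $\mu$-almost every $x_1\in X$ by testing against a countable family of product functions $\{f_i\otimes g_j\}$, where $\{f_i\}\subset C(X)$ is dense. Stone--Weierstrass and metrizability of the space of Borel probability measures then upgrade pointwise convergence on this countable family to weak-$*$ convergence of the empirical measures, after intersecting the $\mu$-conull sets of good $x_1$'s over all test pairs.

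For fixed $f,g\in C(X)$ I split $f = \tilde f\circ \pi + f_\perp$ and $g = \tilde g\circ \pi + g_\perp$, where $\tilde f,\tilde g$ are the conditional expectations onto the Kronecker factor (viewed as functions on $Z$) and $f_\perp, g_\perp$ live in the orthogonal complement, which by the Jacobs--de Leeuw--Glicksberg decomposition is spanned by weak-mixing functions. The Kronecker--Kronecker piece is handled for every $x_1\in X$: since $a\in\textbf{gen}(\mu,T,\Phi)$ and $\pi$ is continuous, $\pi(a)\in\textbf{gen}(\nu,R,\Phi)$; unique ergodicity of $R^\ell\times R^m$ on the closure of the orbit of $(\pi(a),\pi(x_1))$ in $Z\times Z$, combined with the Riemann integrability argument appearing at the end of the proof of Proposition \ref{erg decomposition}, yields
\[
\lim_{N\to\infty}\frac{1}{|\Phi_N|}\sum_{n\in\Phi_N}\tilde f(R^{\ell n}\pi(a))\,\tilde g(R^{mn}\pi(x_1)) = \int_Z \tilde f(\ell z + \pi(a))\,\tilde g(mz + \pi(x_1))\, d\nu(z),
\]
and by the definition \eqref{lambda} of $\lambda_{(a,x_1)}$ the right-hand side equals $\int f\otimes g\, d\lambda_{(a,x_1)}$.

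For the three remaining cross terms involving $f_\perp$ or $g_\perp$, I plan to show the corresponding averages vanish $\mu$-a.e.\ in $x_1$ via a van der Corput argument applied to the $L^2(\mu)$ norm in the $x_1$ variable, in the same spirit as the proof of Proposition \ref{kronecker is characteristic}. For example, expanding $\bigl\|\tfrac{1}{|\Phi_N|}\sum_{n\in\Phi_N} f(T^{\ell n}a)\,g_\perp(T^{m n}x_1)\bigr\|_{L^2(\mu)}^2$ and using $T^m$-invariance of $\mu$ reduces the bound to $\|f\|_\infty^2$ times a Ces\`aro average of correlations $|\int T^{mk}g_\perp\cdot\overline{g_\perp}\,d\mu|$, which vanishes by weak mixing of $g_\perp$; the two other cross terms are handled analogously. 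This gives $L^2(\mu)$ convergence to $0$; passage to $\mu$-almost-sure convergence is then achieved by extracting a subsequence $N_k$ along which the $L^2$ norms are summable and applying Borel--Cantelli, interpolating to the full sequence via the F\o{}lner property of $\Phi$.

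The main obstacle will be the passage from the classical Ces\`aro/van der Corput arguments to the general F\o{}lner sequence $\Phi$: one needs to ensure that boundary terms in the van der Corput estimate are absorbed by the F\o{}lner property and that the subsequential a.e.\ convergence obtained from Borel--Cantelli extends to the full sequence. Once these technical points are dispatched, intersecting the $\mu$-conull sets over the countable test family yields the full-measure set of $x_1$'s for which $(a,x_1)$ is $(T^\ell\times T^m)$-generic for $\lambda_{(a,x_1)}$ along $\Phi$.
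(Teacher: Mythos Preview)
Your approach has a genuine gap in the terms that place $\tilde f\circ\pi$ or $f_\perp$ in the first (i.e., $a$-)slot. The splitting $f=\tilde f\circ\pi+f_\perp$ produces $L^2$ functions, not continuous ones, so expressions such as $\tilde f(R^{\ell n}\pi(a))$ and $f_\perp(T^{\ell n}a)$ are not meaningful at the \emph{specific} point $a$. Your worked example is really the combination $(\tilde f\circ\pi+f_\perp)\cdot g_\perp=f\cdot g_\perp$, which is fine because $f$ is continuous; but the remaining piece $f\otimes(\tilde g\circ\pi)$ --- equivalently the Kronecker--Kronecker term together with the cross term $f_\perp\otimes(\tilde g\circ\pi)$ --- is not ``analogous''. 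For $f_\perp\otimes(\tilde g\circ\pi)$ the van der Corput expansion in $x_1$ yields correlations of $\tilde g\circ\pi$, which is \emph{not} weak mixing, so nothing vanishes; and for the Kronecker--Kronecker term, unique ergodicity of $R^\ell\times R^m$ only controls averages of continuous (or Riemann integrable) test functions, whereas $\tilde f,\tilde g$ are merely $L^2(Z)$. If one instead approximates $\tilde g$ by characters, the problem reduces to limits of the shape $\tfrac{1}{|\Phi_N|}\sum_{n\in\Phi_N}f(T^{\ell n}a)\,\xi^{mn}$, and genericity of $a$ for $\mu$ under $T$ along $\Phi$ gives no handle on such twisted averages along the dilated orbit $T^{\ell n}a$: this amounts to asking that $(a,a)$ be $(T^\ell\times T^m)$-generic along $\Phi$, which is strictly stronger than the hypothesis. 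A secondary issue is the passage from $L^2$ to a.e.\ convergence: interpolating a Borel--Cantelli subsequence to the full sequence via the F{\o}lner property does not work for arbitrary F{\o}lner sequences (there is no monotonicity to exploit), though since the lemma only demands genericity along \emph{some} F{\o}lner sequence, a diagonal subsequence could in principle suffice here.

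The paper avoids all of this by a transfer argument rather than a direct computation. Using Proposition~\ref{erg decomposition}(iii) and Fubini, one first obtains a $\mu$-conull set of points $b$ for which $(b,x)\in\textbf{gen}(\lambda_{(b,x)},T^\ell\times T^m,\Phi')$ for $\mu$-a.e.\ $x$, with $\Phi'=(\{1,\dots,N\})_N$. Since $a\in\textbf{gen}(\mu,T,\Phi)$ forces $\supp(\mu)\subset\overline{O_T(a)}$, one has $\mu(\overline{O_{T^\ell}(a)})>0$ and can choose such a good $b$ inside $\overline{O_{T^\ell}(a)}$. Picking $s(k)$ with $T^{\ell s(k)}a$ close to $b$ and invoking the \emph{continuity} of $(x_1,x_2)\mapsto\lambda_{(x_1,x_2)}$ (and of the countable family $G_j\in C(X\times X)$), one transports the genericity from $b$ to $a$, at the cost of replacing $\Phi'$ by a new F{\o}lner sequence $\Psi_k=\{s(k)+1,\dots,s(k)+N(k)\}$. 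No $L^2$-only function is ever evaluated at $a$.
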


\begin{proof}
The proof is an adaptation of the proof of Lemma 
$3.12$ of \cite{kmrr2}, but requires a few adjustments which 
are not immediately obvious. 

Using property $(iii)$ of Proposition \ref{erg decomposition} and Fubini's theorem it follows 
that for each one of a full measure set of points 
$b\in \supp(\mu)$ there is a full measure set of points $x\in X$ so that $(b,x) \in \textbf{gen}(\lambda_{(b,x)},T^{\ell}\times T^m, \Phi')$, where $\Phi'=(\{1,\dots,N\})_{N\in \N}$. We let $(G_j)_{j=1}^{\infty}$ be a dense subset of $C(X \times X)$ and for each $j\in \N$ we set $\Tilde{G}_j(x,y)=\int_{X\times X} G_j \ d\lambda_{(x,y)}$. As the map $(x_1,x_2) \mapsto \lambda_{(x_1,x_2)}$ is continuous and $(T^{\ell} \times T^m)$-invariant, it follows that each $\Tilde{G}_j$ is continuous and $(T^{\ell} \times T^m)$-invariant.  

Now, as $a\in \textbf{gen}(\mu,T,\Phi)$ we have that $\supp(\mu) \subset \overline{O_T(a)} \subset \bigcup_{j=0}^{\ell-1} \overline{O_{T^{\ell}}(T^{j}a)}$, where $O_S(a)$ is the (forward) orbit of $a$ under a homeomorphism $S: X\to X$, i.e. $\{S^n a \colon n\in \N \}$. Therefore, $\mu(\overline{O_{T^{\ell}}(a)})>0$ and so we can choose some $b$ as above with $b\in \overline{O_{T^{\ell}}(a)}$. This allows us to find, for each $k\in \N$ and $N(k)\in \N$, an $s(k)\in \N$ such that $\max_{1\leq n \leq N(k)}\norm{G_j(T^{\ell n}b, \cdot) - G_j(T^{\ell(n+s(k))}a, \cdot)}_{\infty}< 2^{-k}$ and $\norm{\Tilde{G_j}(b, \cdot) - \Tilde{G_j}(T^{\ell s(k)}a, \cdot)}_{\infty}< 2^{-k}$ for all $j\leq k$. 

Because $(b,x)$ is $(T^{\ell} \times T^m)$-generic 
for $\lambda_{(b,x)}$ for $\mu$-almost every 
$x\in X$, we can find for each $k\in \N$ some $N(k)\in \N$ such that 
$$F_k(x)= \max_{1\leq j\leq k} \left| \frac{1}{N(k)} \sum_{n=1}^{N(k)} G_j(T^{\ell n} b, T^{m n} x) - \Tilde{G_j}( b, x) \right| $$
satisfies $\norm{F_k}_{L^1(\mu)} < 2^{-k}.$ By the choice of $s(k)$ we have that 
$$\Tilde{F_k}(x):= \max_{1\leq j\leq k} \left| \frac{1}{N(k)} \sum_{n=1}^{N(k)} G_j(T^{\ell (n+s(k))} a, T^{m n} x) - \Tilde{G_j}( T^{\ell s(k)}a, x) \right| $$
satisfies $\norm{\Tilde{F_k}}_{L^1(\mu)} < 3\big/ 2^{k}$. 
Then, by the $(T^{\ell} \times T^m)$-invariance of
$\Tilde{G}_j$, we can consider $\Psi_k=\{s(k)+1,\dots,s(k)+N(k)\}$ and then we see that 
$$F'_k(x):=\Tilde{F_k}(T^{m s(k)} x)= \max_{1\leq j\leq k} \left| \frac{1}{|\Psi_k|} \sum_{n\in \Psi_k} G_j(T^{\ell n} a, T^{m n} x) - \Tilde{G_j}( a, x) \right|.$$ As $\mu$ is $T-$invariant it follows that $\norm{F'_k}_{L^1(\mu)}=\norm{\Tilde{F}_k}_{L^1(\mu)}$ and so, if we define $F(x):=\sum_{k=1}^{\infty} F'_k(x)$ it follows that $\norm{F}_{L^1(\mu)} < \infty$ and so $F$ is finite $\mu$-almost everywhere. Finally, for each $x_1\in X$ such that $F(x_1)<\infty$ it must hold
that 
$F'_k(x_1) \xrightarrow{} 0$ as $k\to \infty$ and 
so $(a,x_1) \in \textbf{gen}(\lambda_{(a,x_1)},T^{\ell}\times T^m, \Psi)$. 
\end{proof}

Combining some of the above results we can 
guarantee the existence of $(\ell,m)$-Erd\H{o}s 
progressions via the following analogue of 
Proposition $3.3$ in \cite{kousek_radic2024}. The proof is 
almost identical to that of the latter mentioned
proposition and so we omit it. For reference, the established properties of the measures $\sigma_a$ and $\lambda_{(x_1,x_2)}$ used in the proof are those in \cref{full measure generic points}, \cref{projections of sigma} and \cref{support of sigma}.

\begin{proposition}\label{EP full measure}
Let $(X,\mu,T)$ be an ergodic system and assume there is a 
continuous factor map $\pi\colon  X \to Z$ to its Kronecker factor. 
Let 
$a\in \textbf{gen}(\mu,\Phi)$, for some F\o lner sequence $\Phi$. 
Then for $\sigma_a$-almost every $(x_1,x_2) \in X \times X$, the 
point 
$(a,x_1,x_2)$ is an $(\ell,m)$-Erd\H{o}s progression. 
\end{proposition}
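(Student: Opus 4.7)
The plan is to combine the established properties of $\sigma_a$ and the measures $\lambda_{(x_1,x_2)}$ to produce, for $\sigma_a$-almost every $(x_1,x_2)$, an explicit sequence of return times along which $(T^\ell\times T^m)^{n_i}(a,x_1)$ converges to $(x_1,x_2)$. The key observation is that three good events each hold on a $\sigma_a$-full set, and their intersection immediately delivers an $(\ell,m)$-Erd\H{o}s progression.

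First, I would pin down the $\sigma_a$-full set. By \cref{full measure generic points}, there exists a $\mu$-conull set $G\subset X$ such that for every $x_1\in G$ the pair $(a,x_1)$ is $(T^\ell\times T^m)$-generic for $\lambda_{(a,x_1)}$ along some F\o lner sequence $\Psi$. By \cref{projections of sigma} we have $\pi_1\sigma_a\leq \ell\mu$, so $\pi_1\sigma_a(G^c)=0$, and hence $\sigma_a(\{(x_1,x_2):x_1\in G\})=1$. Intersecting with the $\sigma_a$-full set from \cref{equality of lambdas} (on which $\lambda_{(a,x_1)}=\lambda_{(x_1,x_2)}$) and with the $\sigma_a$-full set $S$ from \cref{support of sigma} (on which $(x_1,x_2)\in\supp(\lambda_{(x_1,x_2)})$), we obtain a $\sigma_a$-conull set on which all three properties hold simultaneously.

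Fix any such $(x_1,x_2)$. Since $(x_1,x_2)\in\supp(\lambda_{(x_1,x_2)})=\supp(\lambda_{(a,x_1)})$, every open neighbourhood $U$ of $(x_1,x_2)$ satisfies $\lambda_{(a,x_1)}(U)>0$. Combined with the genericity of $(a,x_1)$ for $\lambda_{(a,x_1)}$ along $\Psi$ and the portmanteau lemma applied to the open set $U$, this gives
\[
\liminf_{N\to\infty}\frac{1}{|\Psi_N|}\bigl|\{n\in\Psi_N:(T^\ell\times T^m)^n(a,x_1)\in U\}\bigr|\ \geq\ \lambda_{(a,x_1)}(U)\ >\ 0.
\]
Hence the forward orbit of $(a,x_1)$ under $T^\ell\times T^m$ enters every neighbourhood of $(x_1,x_2)$ infinitely often, so by a standard diagonal extraction we obtain a strictly increasing sequence $n_1<n_2<\cdots$ with $(T^\ell\times T^m)^{n_i}(a,x_1)\to(x_1,x_2)$. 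By \cref{EP def}, this exhibits $(a,x_1,x_2)$ as an $(\ell,m)$-Erd\H{o}s progression.

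There is no real obstacle here beyond bookkeeping: all the heavy lifting — continuous ergodic decomposition, the support identity \cref{support of sigma}, the coincidence \cref{equality of lambdas}, and genericity along a prescribed F\o lner sequence \cref{full measure generic points} — has already been carried out. The only point requiring care is verifying that genericity of $(a,x_1)$ along $\Psi$ (which is an \emph{a priori} different F\o lner sequence than the original $\Phi$ governing $a$) is enough to force orbit visits to every neighbourhood of any support point, which is exactly the content of the portmanteau step above.
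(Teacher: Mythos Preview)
Your proof is correct and follows exactly the route the paper intends: the paper omits the proof, pointing to the analogue in \cite{kousek_radic2024} and listing \cref{full measure generic points}, \cref{projections of sigma}, and \cref{support of sigma} as the key ingredients, while the preceding discussion of \cref{equality of lambdas} already spells out the mechanism you use (genericity of $(a,x_1)$ for $\lambda_{(a,x_1)}=\lambda_{(x_1,x_2)}$ together with $(x_1,x_2)\in\supp(\lambda_{(x_1,x_2)})$). Your inclusion of \cref{equality of lambdas} in the list of inputs is in fact necessary and simply makes explicit what the paper's brief reference left implicit.
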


We are now in the position to prove all our main dynamical 
results. Before presenting the proofs, we emphasise again that, 
according to our discussion in the beginning of Section \ref{cts ergodic decomposition section}, these theorems also imply Theorems \ref{dynamical mB+lB}, \ref{dynamical mB+lB unrestricted shift} and \ref{dynamical mB+lB unrestricted no shift}, respectively.

\begin{proof}[Proof of \cref{continuous dynamical mB+lB}]
Since for $\sigma_a$-almost every point
$(x_1,x_2)\in X\times X$ the triple 
$(a,x_1,x_2)$ is an $(\ell,m)$-Erd\H{o}s 
progression by Proposition \ref{EP full measure}, we simply need to show that $\sigma_a(X \times T^{-t}E)>0$ for some $t\in \N.$ The latter 
follows directly by the fact that 
$\pi_2 \sigma_a \left( \bigcup_{t\in \N} T^{-t}E 
\right) >0$. Indeed, as $\mu$ is ergodic we 
have that 
$\mu(\bigcup_{t\in \N}T^{-t}E)= 1$ and thus, in 
way of Lemma \ref{projections of sigma}, we 
actually see that $\pi_2 \sigma_a \left( \bigcup_{t\in \N} T^{-t}E 
\right) = 1$.
\end{proof}

\begin{remark}\label{lB in shift}
In fact one can modify the above proof to show that there 
exist $t_1,t_2\in \N$ such that $\sigma_a(E\times T^{-t_1}E)>0$ and $\sigma_a(T^{-t_2}E \times T^{-t_2}E)>0.$ These in turn guarantee $(\ell,m)$-\Erdos\ progressions $(a,x_1,x_2), (a,x_1',x_2')\in X^3$ with $(x_1,T^{t_1}x_2),(T^{t_2}x_1',T^{t_2}x_2') \in E\times E$ and 
then, \cref{mB+lB} can be strengthened, so that for any set $A\subset \N$ with $\diff^{*}(A)>0$ the following hold: 
\begin{enumerate}[(i)]
    \item There exist an infinite set $B\subset \N$ and a shift $t\in \N$ such that $\ell B \subset A$ and
    $$\{mb_1 + \ell b_2\colon  b_1,b_2\in B,\ b_1 < b_2\} +t \subset A.$$  
    \item There exist an infinite set $B\subset \N$ and a shift $t\in \N$ such that 
    $$\ell B \cup \{mb_1 + \ell b_2\colon  b_1,b_2\in B,\ b_1 < b_2\} \subset A-t.$$  
\end{enumerate}

\end{remark}

\begin{proof}[Proof of \cref{continuous dynamical mB+lB unrestricted shift}]
Applying Proposition \ref{EP full measure} once again, we only have 
to show that for some $j\in \{1,\ldots,\ell+m\}$, $\sigma_a(E_j \times F_j)>0$, because this would imply the existence of
$x_1,x_2 \in X$ so that $(a,x_1,x_2) \in X^3$ is an $(\ell,m)$-Erd\H{o}s progression 
and $(x_1,x_2) \in E_j \times F_j$. This would follow from the inequality
\begin{equation} \label{ineq:1}  
\sigma_a(E_j\times X)+ \sigma_a(X\times F_j)>1,    
\end{equation}
because $$E_j \times F_j=\left(E_j\times X\right) \cap \left(X\times F_j\right)$$ and $\sigma_a$ is a probability measure. It suffices to show that \eqref{ineq:1} has to be satisfied with $j=1$ provided that it fails for $j=2,\ldots,\ell+m$. In particular, we assume that 
$$\sum_{j=2}^{\ell+m} \sigma_a(E_j\times X)+ \sigma_a(X\times F_j) \leq \ell+m-1,$$
which can be rewritten as 
\begin{equation} \label{ineq:2}
\sum_{j=2}^{\ell+m} \pi_1\sigma_a(E_j)+ \pi_2\sigma_a(F_j) \leq \ell+m-1.    
\end{equation}
Under this assumption, we have to show that $\pi_1\sigma_a(E_1)+\pi_2 \sigma_a(F_1)>1$, which by \cref{projections of sigma} can be rewritten as 
\begin{equation}\label{ineq:3}
\pi_1\sigma_a(E_1)+(\ell+m)\mu(F_1)-\sum_{j=1}^{\ell+m-1}T^{j}\pi_2\sigma_a(F_1)>1.    
\end{equation}
Recalling that $F_j=T^{-(j-1)}F_1$, for each $j=1,\ldots,\ell+m$, we see that \eqref{ineq:3} is equivalent to 
\begin{equation*}
\pi_1\sigma_a(E_1)+(\ell+m)\mu(F_1)-\sum_{j=2}^{\ell+m}\pi_2\sigma_a(F_j)>1,    
\end{equation*}
which by \eqref{ineq:2} would follow from 
\begin{equation} \label{ineq:4}
\pi_1\sigma_a(E_1)+(\ell+m)\mu(F_1)+\sum_{j=2}^{\ell+m}\pi_1\sigma_a(E_j)-(\ell+m-1)>1.   
\end{equation}
Next, we note that \eqref{ineq:4} can be rewritten as
\begin{equation} \label{ineq:4'}
(\ell+m)\mu(F_1)+\sum_{j=1}^{\ell+m}\pi_1\sigma_a(E_j)>\ell+m. \end{equation}
We now recall that $E_{i+(q+1)}=T^{-1}E_i$, for $i=1,\ldots,\ell+m-q-1$, and if $q>m/\ell$ (i.e. whenever $m/\ell \in \Q$) we also consider auxiliary sets $E_{\ell+m+1},\ldots,E_{\ell(q+1)}$ such that $E_{i+(q+1)}=T^{-1}E_i$, for $i=1,\ldots,(\ell-1)(q-1)$. Then, 
$$\sum_{j=1}^{\ell(q+1)}\pi_1\sigma_a(E_j)=\sum_{j=1}^{q+1} \sum_{i=0}^{\ell-1} T^{i}\pi_1\sigma_a(E_j)=\ell \sum_{j=1}^{q+1} \mu(E_j) $$
in accordance with \cref{projections of sigma}. We also make the trivial observation that 
$$ \sum_{j=1}^{\ell(q+1)}\pi_1\sigma_a(E_j) -\sum_{j=1}^{\ell+m}\pi_1\sigma_a(E_j) = \sum_{j=\ell(k+1)+1}^{\ell(q+1)} \pi_1\sigma_a(E_j) \leq \ell(q-k),$$
since $\ell(k+1)=\ell+m$. Therefore, it follows that 
$$(\ell+m)\mu(F_1)+\sum_{j=1}^{\ell+m}\pi_1\sigma_a(E_j) \geq (\ell+m)\mu(F_1)+\ell \sum_{j=1}^{q+1}\mu(E_j)-\ell(q-k)$$
and so \eqref{ineq:4'} follows from \eqref{cts mB+lB shift eq}.
\end{proof}

\begin{proof}[Proof of \cref{continuous dynamical mB+lB unrestricted no shift}]
We need to find an Erd\H{o}s progression of the 
form 
$(a,x_1,x_2)\in X^3$  
with $(x_1,x_2) \in E \times F$ and as before, by Proposition 
\ref{EP full measure} it suffices to show that 
$\sigma_a(E \times F)>0$. Once again, this would follow from
\begin{equation*}   
\sigma_a(E\times X)+ \sigma_a(X\times F)>1.    
\end{equation*} 
To this end, we simply note that 
\begin{align*}
    \sigma_a(E \times X)+\sigma_a(X \times F)&=\pi_1\sigma_a(E)+\pi_2\sigma_a(F) & \\
    &= \ell \mu(E)+(\ell+m)\mu(F)-\sum_{j=1}^{\ell-1}\pi_1\sigma_a(T^{-j}E)- \sum_{i=1}^{\ell+m-1}\pi_2\sigma_a(T^{-i}F) & \quad \\
    &\geq \ell\mu(E)+(\ell+m)\mu(F)-(\ell-1)-(\ell+m-1)>1, & \quad  
\end{align*}
where the second equality follows by \cref{projections of sigma} and the strict inequality by the assumption in \eqref{cts mB+lB no shift eq}. This concludes the proof. 
\end{proof}

\section{Examples for optimality}\label{counterexamples}

In this short section we will show that Theorems 
\ref{mB+lB unrestricted shift} and \ref{mB+lB unrestricted no shift} are optimal. That is, the density thresholds presented in 
both these results cannot be improved.

\begin{proposition} \label{counterexamples upper density}
Let $\ell,m\in \N$ and $k=m/\ell$. Then, there exist two sets 
$A,A'\subset \N$ with $\overline{\diff}(A)=(k+1)/(k+2)=1-1/(k+2)$ and $\overline{\diff}(A')=1-1/\left(\ell(k+1)(k+2)\right)$ such that for any infinite set $B$ and any integer $t\in \N$ we have that $\{mb_1 + \ell b_2 : b_1,b_2 \in B\ \text{and}\ b_1 \leq b_2\}+t \not\subset A$ and $\{mb_1 + \ell b_2 : b_1,b_2 \in B\ \text{and}\ b_1 \leq b_2\} \not\subset A'.$   
\end{proposition}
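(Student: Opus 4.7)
My approach is to construct explicit sets $A$ and $A'$ achieving the stated density thresholds, and then to show they contain no infinite-$B$ pattern of the required form.

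For $A$ with $\overline{\diff}(A)=(m+\ell)/(m+2\ell)=1-1/(k+2)$, the natural first guess of a periodic set obtained by deleting $\ell$ fixed residue classes modulo $m+2\ell$ matches the density but fails as a counterexample: for any infinite $B$ one passes to an infinite subset $B'\subset B$ lying in a single residue class $r$ modulo $m+2\ell$, whereupon the shifted pattern lies entirely in the single class $(m+\ell)r+t\pmod{m+2\ell}$, and $t$ may always be chosen to align this with the allowed residues. To circumvent this I would use a scale-varying construction: partition $\N$ into rapidly growing blocks $J_i=[N_i,N_{i+1})$ and, inside each $J_i$, delete a different $\ell$-element family of residue classes modulo $m+2\ell$, the choice of family rotated across $i$ so that every residue class is deleted in infinitely many $J_i$. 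The uniform relative density of deletions guarantees $\overline{\diff}(A)=1-1/(k+2)$. For $A'$ the construction is analogous but at the finer modulus $L:=\ell(k+1)(k+2)=\mathrm{lcm}(m+\ell,m+2\ell)$, with deletions of density $1/L$ per block, similarly rotated; the relevance of the LCM in the unshifted case is that one must simultaneously obstruct both the diagonal $(m+\ell)B\subset A'$ and the off-diagonal $\{mb_1+\ell b_2:b_1<b_2\}\subset A'$.

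The main obstacle is verifying that these sets are indeed counterexamples. Given any infinite $B$ and any $t\in\N$, after passing to $B'$ in a fixed residue class $r$, the shifted pattern lies in a single class $c=(m+\ell)r+t$ modulo $m+2\ell$ (respectively modulo $L$), and one must exhibit a pattern element lying in a block $J_i$ where $c$ is among the deleted residue classes. The heart of the argument is to organize the rotation schedule so that for every residue class $c$, the set of blocks that delete $c$ is \emph{unavoidable} -- meaning no infinite $B$ can arrange its pattern elements to sit exclusively in blocks where $c$ is not deleted. Designing this schedule carefully, balancing the growth rate of $N_i$ against the need to cover every $c$ densely enough to intercept arbitrary lacunary $B$'s (for which the pattern sits in a very sparse subsequence of blocks), is the crux of the proof.
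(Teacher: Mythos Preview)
Your construction cannot work. Because you delete a fixed $\ell/(m+2\ell)$ proportion of each block $J_i$, your set $A$ has \emph{natural} density $(k+1)/(k+2)$; in particular $\underline{\diff}(A)=(k+1)/(k+2)>1/2$. But then \cref{lower density theorem intro} guarantees that $A$ \emph{does} contain a shifted pattern $\{mb_1+\ell b_2:b_1\le b_2,\ b_1,b_2\in B\}+t$ for some infinite $B$ and some $t$. Hence no rotation schedule can turn your $A$ into a counterexample; the obstruction is not at the level of the unresolved ``crux'' you identify, but already in the uniform local density of the construction. Any valid counterexample must satisfy $\underline{\diff}(A)\le 1/2$, which a block-wise residue deletion with constant proportion cannot achieve while also forcing $\overline{\diff}(A)=(k+1)/(k+2)$. (Incidentally, your identity $\ell(k+1)(k+2)=\mathrm{lcm}(m+\ell,m+2\ell)$ is also false in general: for $\ell=2$, $m=1$ the left side equals $15/2$, which is not even an integer.)

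The paper's construction is multiplicative rather than modular. One takes
\[
A=\N\cap\bigcup_{n\in\N}\bigl[(k+1)^{2n},\,(k+1-1/n)\,(k+1)^{2n}\bigr),
\]
a union of intervals at geometric ratio essentially $k+1$; this has $\overline{\diff}(A)=(k+1)/(k+2)$ along the \Folner\ sequence $[1,(k+1)^{2N+1})$, but $\underline{\diff}(A)=1/(k+2)\le 1/2$. The avoidance argument exploits the identity $(m+\ell)b=(k+1)\cdot\ell b$: for any fixed $b'\in B$ and $t$, choose $b\in B$ so large that $(m+\ell)b+t$ lands in a kept interval $[(k+1)^{2(n+1)},(k+1)^{2n+3})$; then $\ell b+t$, and therefore $mb'+\ell b+t$, falls in the adjacent gap $[(k+1)^{2n+1},(k+1)^{2(n+1)})$. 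No residue-class bookkeeping is needed. For $A'$ the paper simply sets $A'=A\cup\bigl(\N\setminus(\ell+m)\N\bigr)$ and reduces to the first case by passing to an infinite subset of $B$ lying in a single class modulo $\ell+m$.
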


\begin{proof}[Proof of \cref{counterexamples upper density}]
Let $A$ be the subset of $\N$ defined by
\begin{equation*}
    A = \N \cap \bigcup_{n \in \N} [(k+1)^{2n}, (k+1-1/n) \cdot (k+1)^{2n}).
\end{equation*}
It is clear by the definition of $A$ that $\overline{\diff}(A) = 
\diff_{\Phi} (A)$, where $\Phi=(\Phi_N)$ is the F\o lner sequence 
given by $N \mapsto  [1, (k+1-1/N) (k+1)^{2N}) \cap \N$. We show 
that $\overline{\diff}(A)=(k+1)/(k+2).$ 

Indeed, first observe that $\diff_{\Phi} (A)=\diff_{\Tilde{\Phi}} (\Tilde{A})$, where 
\begin{equation*}
    \Tilde{A} = \N \cap \bigcup_{n \in \N} [(k+1)^{2n}, (k+1)^{2n+1})
\end{equation*}
and $\Tilde{\Phi}=(\Tilde{\Phi}_N)$ is the sequence given by $N \mapsto  [1, (k+1)^{2N+1}) \cap \N$ and then 
\begin{multline*}
\frac{\left| \Tilde{A} \cap [1, (k+1)^{2N+1})] \right|}{ (k+1)^{2N+1}} = \frac{1}{(k+1)^{2N+1}} \sum_{n=1}^{N} k(k+1)^{2n} = \\ 
\frac{k}{k+1} \sum_{n=1}^N \frac{1}{((k+1)^{2})^{N-n}} \xrightarrow{N\to \infty} \frac{k}{k+1} \cdot \frac{1}{1-\frac{1}{(k+1)^2}}=\frac{k+1}{k+2}. 
\end{multline*}
Now, assume there exist an infinite $B\subset \N$ and some integer $t\in \N$ for which $\{mb_1 + \ell b_2 : b_1,b_2 \in B\ \text{and}\ b_1 \leq b_2\}+t \subset A$. In particular, for any $b'\in B$ fixed there is $b\in B$ arbitrarily large such that $\{(\ell+m )b+t,\ell b+mb'+t\}\subset A$. Then, we can choose $b$ so that, $(\ell+m )b+t \in [(k+1)^{2(n+1)}, (k+1-\frac{1}{n+1}) (k+1)^{2(n+1)}))$, for some $n \in \N$ with respect to which both $t$ and $b'$ are negligible. Note that $\ell+m=\ell(k+1)$ and so it follows that
$$ (k+1)^{2n+1} < \ell b + t < \left(k+1-\frac{1}{n+1}\right) (k+1)^{2n+1} + t. $$
By the choice of $b$ with respect to $t$ and $b'$ we see that 
$mb'+\ell b+t \in [(k+1)^{2n+1},\ (k+1)^{2(n+1)}) \subset \N \setminus A$, reaching a 
contradiction. To see why the last claim is true, observe that $$(k+1)^{2(n+1)}-\left(k+1-\frac{1}{n+1}\right) (k+1)^{2n+1} = (k+1)^{2n+1} \cdot \frac{1}{n+1} \xrightarrow{n\to \infty} \infty.$$
This completes the first construction. 

Keeping $A$ as defined above we now consider
$A'=A \cup \left( \bigcup_{j=1}^{\ell+m -1}(\ell+m )\N+j \right)$. In other words, $A'=A \cup \left( \left(\N \setminus A \right) \setminus \left( (\ell+m )\N \right) \right)$. By the definition of $A$ we have that $\N \setminus A$ 
is a union of discrete intervals with lower density equal to $\underline{\diff}(\N \setminus A)=1-\overline{\diff}(A)=1/(k+2)$ and so 
$\underline{\diff}(\left(\N \setminus A \right) \cap (\ell+m ) \N )=1/(\ell+m )(k+2)=1/\left(\ell(k+1)(k+2)\right)$. Since the complement of $A'$ is precisely $\left( \N \setminus A \right) \cap  (\ell+m )\N $, it follows that $\overline{\diff}(A')=1-1/\left(\ell(k+1)(k+2)\right)$. 
Finally, we claim there is no infinite set $B\subset \N$ satisfying $\{mb_1 + \ell b_2 : b_1,b_2 \in B\ \text{and}\ b_1 \leq b_2\} \subset A'$. Indeed, if there were such a set, we could consider 
an infinite subset $B'\subset B$ consisting of integers which are equal modulo $(\ell+m)$ (see also the proof of \cref{mB+lB no shift restricted}) 
and so we would have that $\{mb_1 + \ell b_2 : b_1,b_2 \in B\ \text{and}\ b_1 \leq b_2\} \subset A' \cap (\ell+m )\N \subset A$. This contradicts the first construction and so we conclude.  
\end{proof}

\section{Lower density results}\label{lower density section}

We also want to briefly explore what further results we can get 
by considering the input of the information provided by a set's 
lower natural density as well. The proofs in this section are 
straightforward adaptations of the arguments used thus far, 
so we omit repetitive details, but we include comments 
regarding all the non-obvious changes 
in the argumentation. 

A possibility to capture this new input -- one that has been tried 
and found to be fruitful in \cite{kousek_radic2024} -- is to 
replace \eqref{ineq: 12}, which appears in both
correspondence principles in Lemmas \ref{FCP shift general k} and \ref{FCP no shift general k}, by the always true inequality $\mu'(\Sigma \times E) \geq \underline{\diff}(A).$
To see this recall that $\mu'$ is defined as a weak* limit of 
the measure sequence $(\mu_i)$ and take the liminf as 
$i\to\infty$ in \eqref{ineq: 11}. Making this simple change in the proofs of the correspondence principles we recover the following 
results.

\begin{lemma} \label{FCP lower density shift} 
 Let $A\subset \N$ and $\ell,m \in \N$ with $k=m/\ell$ and let $q=\lceil k \rceil$, the ceiling of $k$. 
 Then, there exist an ergodic system $(\Sigma 
 \times \Sigma, \mu, S^{(q+1)} \times S)$, an open set $E\subset \Sigma$, a pair of points $a,a'\in \Sigma$ and a F\o lner sequence 
 $\Phi$, such that $(a',a)\in \textbf{gen}(\mu,\Phi)$ and 
 \begin{equation*} (\ell+m)\mu(\Sigma \times E) + \ell \sum_{j=0}^{q} \mu(S^{-j}E \times \Sigma) \geq (\ell+m)\cdot \underline{\diff}(A)+\ell(k+1)\cdot \overline{\diff}(A) + \ell(q-k).
 \end{equation*}
 It also holds that $A=\{n\in \N: S^n a \in E \}$ and $(A-j)/(\ell+m ) =\{n\in \N: S^{(q+1)\ell n+j}a' \in E\}$, for each $j=0,1,\dots,\ell+m -1$, where $(A-j)/(\ell+m )=\{n\in \N: n(\ell+m )+j\in A\}$.
    
\end{lemma}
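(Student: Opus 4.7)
The plan is to follow the proof of Lemma~\ref{FCP shift general k} almost verbatim, replacing only the lower bound on the first marginal of the invariant measure. Fix $A \subset \N$ and pick a sequence $(N_i)$ realizing $\overline{\diff}(A)$, define $a \in \Sigma$ as the indicator of $A$ and $a' \in \Sigma$ exactly as in the proof of Lemma~\ref{FCP shift general k}, and let $E = \{x \in \Sigma : x(0) = 1\}$. These choices already guarantee the claimed identities $A = \{n \in \N : S^n a \in E\}$ and $(A - j)/(\ell + m) = \{n \in \N : S^{(q+1)\ell n + j} a' \in E\}$. Setting $N_i' = \lfloor N_i/(k+1) \rfloor$ and
$$\mu_i = \frac{1}{N_i'} \sum_{n=1}^{N_i'} \delta_{(S^{(q+1)} \times S)^n (a', a)},$$
I would extract a weak* accumulation point $\mu'$ of $(\mu_i)$, which is automatically $(S^{(q+1)} \times S)$-invariant.

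The only substantive change happens in the estimate for $\mu'(\Sigma \times E)$. Because $E$ is clopen, weak* convergence along the extracted subsequence $(i_k)$ gives
$$\mu'(\Sigma \times E) = \lim_{k \to \infty} \mu_{i_k}(\Sigma \times E) = \lim_{k \to \infty} \frac{|A \cap [1, N_{i_k}']|}{N_{i_k}'} \geq \underline{\diff}(A),$$
where the final inequality uses $N_{i_k}' \to \infty$ together with the definition of lower natural density (any subsequential limit of $|A \cap [1, N]|/N$ along some $N \to \infty$ is bounded below by $\underline{\diff}(A)$). The second estimate
$$\sum_{j=0}^{q} \mu'(S^{-j} E \times \Sigma) \geq (k+1) \overline{\diff}(A) + q - k$$
is obtained by exactly the same calculation as in the proof of Lemma~\ref{FCP shift general k}, since the sequence $(N_i)$ still realizes $\overline{\diff}(A)$ after passing to $(i_k)$. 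Combining these two bounds with weights $\ell + m$ and $\ell$ yields the required inequality for $\mu'$.

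I would close by following the final step of the proof of Lemma~\ref{FCP shift general k} verbatim: extract an ergodic component $\mu$ of $\mu'$ for which the inequality still holds, assume $\mu$ is supported on the orbit closure of $(a', a)$, and invoke \cite[Proposition~3.9]{Fur1} to produce a F\o lner sequence $\Phi$ with $(a', a) \in \textbf{gen}(\mu, \Phi)$. I do not anticipate any genuine obstacle; the argument amounts to a one-line modification in which the derived estimate $\mu'(\Sigma \times E) \geq (k+1) \overline{\diff}(A) - k$ is traded for the unconditional bound $\mu'(\Sigma \times E) \geq \underline{\diff}(A)$, while every other component of the proof, including the combinatorial construction of $a'$ and the $(S^{(q+1)} \times S)$-invariance of the resulting measure, remains intact.
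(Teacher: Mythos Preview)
Your proposal is correct and matches the paper's own argument essentially verbatim: the paper explicitly states that Lemma~\ref{FCP lower density shift} is obtained from the proof of Lemma~\ref{FCP shift general k} by replacing the derived bound $\mu'(\Sigma\times E)\geq (k+1)\overline{\diff}(A)-k$ with the unconditional bound $\mu'(\Sigma\times E)\geq \underline{\diff}(A)$, obtained by taking the $\liminf$ in the identity $\mu_i(\Sigma\times E)=|A\cap[1,N_i']|/N_i'$. Your handling of the subsequence needed for weak* convergence is if anything slightly more careful than the paper's presentation.
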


\begin{lemma} \label{FCP lower density no shift}
Let $A\subset \N$ and $\ell,m\in \N$ with $k=m/\ell$. Then, there exists an ergodic system $(\Sigma 
\times \Sigma, \mu, S \times S)$, an open set $E\subset \Sigma$, a pair of points $a'',a\in \Sigma$ and a F\o lner sequence 
$\Phi$, such that $(a'',a)\in \textbf{gen}(\mu,\Phi)$ and 
\begin{equation*} 
(\ell+m)\mu(\Sigma \times E) + \ell \mu(E \times \Sigma) \geq  (\ell+m)\cdot \underline{\diff}(A) +\ell \left( (k+1)\cdot \overline{\diff}(A)-k \right).
\end{equation*}
It also holds that $A = \{ n \in \N: S^n a \in E \}$ and $A/(\ell+m ) =\{n\in \N: S^{\ell n}a'' \in E\}$. 
\end{lemma}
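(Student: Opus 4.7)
The plan is to retrace the proof of Lemma \ref{FCP no shift general k} essentially verbatim, changing only the lower bound on $\mu'(\Sigma \times E)$ so as to exploit $\underline{\diff}(A)$ in place of $(k+1)\cdot \overline{\diff}(A) - k$, as indicated in the paragraph preceding the statement.

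Concretely, I would fix a sequence $(N_i)$ of positive integers along which $|A\cap[1,N_i]|/N_i \to \overline{\diff}(A)$, define $a, a'' \in \Sigma$ and the clopen set $E = \{x \in \Sigma : x(0) = 1\}$ exactly as in the proof of Lemma \ref{FCP no shift general k}, so that the identities $A = \{n : S^n a \in E\}$ and $A/(\ell+m) = \{n : S^{\ell n} a'' \in E\}$ come for free. Setting $N'_i = \lfloor N_i/(k+1)\rfloor$, one considers the measures $\mu_i = \frac{1}{N'_i}\sum_{n=1}^{N'_i} \delta_{(S\times S)^n(a'',a)}$ and passes to a weak* accumulation point $\mu'$, which is automatically $(S\times S)$-invariant.

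The lower bound $\mu'(E \times \Sigma) \geq (k+1)\overline{\diff}(A) - k$ is inherited verbatim from the earlier proof, since that argument relied only on choosing $(N_i)$ to witness $\overline{\diff}(A)$, splitting $\mu_i(E \times \Sigma)$ into the contributions of multiples and non-multiples of $\ell$, and using $|A \cap (\ell+m)\N \cap [1,N_i]| \geq |A \cap [1,N_i]| - (1 - 1/(\ell+m))N_i$. For the other factor, the equality $\mu_i(\Sigma \times E) = |A \cap [1,N'_i]|/N'_i$ together with $N'_i \to \infty$ immediately gives $\liminf_i \mu_i(\Sigma \times E) \geq \underline{\diff}(A)$ by definition of lower density. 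Because $\Sigma \times E$ is clopen and hence a continuity set for every Borel measure on $\Sigma \times \Sigma$, weak* convergence along the chosen subsequence preserves equalities for $\Sigma \times E$, yielding $\mu'(\Sigma \times E) \geq \underline{\diff}(A)$.

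Adding the two bounds with weights $(\ell+m)$ and $\ell$ produces the required inequality for $\mu'$. The proof is then completed by picking an $(S\times S)$-ergodic component $\mu$ of $\mu'$ that still satisfies this inequality (by linearity of the ergodic decomposition applied to the two factor sets), arranged to be supported on the orbit closure of $(a'',a)$, and invoking the standard argument of \cite[Proposition 3.9]{Fur1} to produce a F\o lner sequence $\Phi$ along which $(a'',a)$ is generic for $\mu$. I do not anticipate a real obstacle: the author has flagged precisely which estimate changes, and the substitution is transparent once one observes that clopenness of $E$ lets the liminf survive the passage to the weak* limit.
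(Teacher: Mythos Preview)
Your proposal is correct and follows exactly the route the paper indicates: you reproduce the proof of Lemma~\ref{FCP no shift general k} verbatim, replacing only the bound \eqref{ineq: 12} on $\mu'(\Sigma\times E)$ by $\mu'(\Sigma\times E)\geq\underline{\diff}(A)$ via the liminf of \eqref{ineq: 11}, precisely as the paper instructs in the paragraph preceding Lemma~\ref{FCP lower density shift}.
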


Now that we have these additional correspondence principles we 
may expand on the arguments presented in Section \ref{unrestricted results}, using Theorems \ref{dynamical mB+lB unrestricted shift}, \ref{dynamical mB+lB unrestricted no shift} and Proposition \ref{EP and return times} to get the following combinatorial result. 

\begin{theorem} \label{mB+lB lower density}
Let $A\subset \N$ and $\ell,m \in \N$.
\begin{enumerate}
    \item If $\underline{\diff}(A)+\overline{\diff}(A)>1,$ there is an infinite set $B\subset \N$ and some $t\in \{0,1,\ldots,\ell+m-1\}$ such that $\{mb_1 + \ell b_2\colon  b_1,b_2\in B,\ b_1 \leq b_2\}+t \subset A.$
    \item If $\underline{\diff}(A)+\overline{\diff}(A)>2-1/(\ell+m),$ there is an infinite set $B\subset \N$ such that $\{mb_1 + \ell b_2\colon  b_1,b_2\in B,\ b_1 \leq b_2\} \subset A.$
\end{enumerate}
\end{theorem}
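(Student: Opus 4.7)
The plan is to rerun the deductions in Section \ref{unrestricted results} verbatim, but with the two refined correspondence principles \cref{FCP lower density shift} and \cref{FCP lower density no shift} in place of \cref{FCP shift general k} and \cref{FCP no shift general k} respectively. No new dynamical input is required beyond \cref{dynamical mB+lB unrestricted shift}, \cref{dynamical mB+lB unrestricted no shift} and \cref{EP and return times}; the task reduces to checking that the new density hypotheses still clear the dynamical thresholds.

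For part (1), given $A \subset \N$ with $\underline{\diff}(A) + \overline{\diff}(A) > 1$, I would apply \cref{FCP lower density shift} to produce an ergodic system $(\Sigma \times \Sigma, \mu, S^{(q+1)} \times S)$, an open set $E$, and points $a', a$ with $(a', a) \in \textbf{gen}(\mu, \Phi)$. Using $\ell + m = \ell(k+1)$, the bound from \cref{FCP lower density shift} becomes
\[
(\ell+m)\mu(\Sigma \times E) + \ell \sum_{j=0}^{q} \mu(S^{-j}E \times \Sigma) \geq \ell(k+1)\bigl(\underline{\diff}(A) + \overline{\diff}(A)\bigr) + \ell(q-k),
\]
which under the hypothesis strictly exceeds $\ell(k+1) + \ell(q-k) = \ell(q+1)$. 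Setting $E_j = S^{-(j-1)}E \times \Sigma$ and $F_j = \Sigma \times S^{-(j-1)}E$ for $j = 1, \ldots, \ell+m$ puts us in the setting of \cref{dynamical mB+lB unrestricted shift} with \eqref{mB+lB shift eq} satisfied, yielding an $(\ell,m)$-Erd\H{o}s progression $((a',a),(x_{10},x_{11}),(x_{20},x_{21}))$ in some $E_j \times F_j$. Applying \cref{EP and return times} and unraveling the two defining properties of $a'$ exactly as in the deduction of \cref{mB+lB unrestricted shift}, I recover an infinite $B \subset \N$ and some $j \in \{1, \ldots, \ell+m\}$ with $B \subset (A-(j-1))/(\ell+m)$ and $\{mb_1 + \ell b_2 : b_1 < b_2\} \subset A - (j-1)$. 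The first inclusion says $(\ell+m)b + (j-1) \in A$ for every $b \in B$, that is, the diagonal pairs are covered, so combining with the second inclusion yields $\{mb_1 + \ell b_2 : b_1 \leq b_2\} + (j-1) \subset A$, completing the proof with $t = j - 1 \in \{0, \ldots, \ell+m-1\}$.

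For part (2), given $A$ with $\underline{\diff}(A) + \overline{\diff}(A) > 2 - 1/(\ell+m)$, I would apply \cref{FCP lower density no shift} to obtain $(\Sigma \times \Sigma, \mu, S \times S)$, open $E \subset \Sigma$, and points $a'', a$. Its bound rearranges as
\[
(\ell+m)\mu(\Sigma \times E) + \ell\mu(E \times \Sigma) \geq \ell(k+1)\bigl(\underline{\diff}(A) + \overline{\diff}(A)\bigr) - \ell k,
\]
and the hypothesis makes the right side strictly greater than $\ell(k+1)(2 - 1/(\ell+m)) - \ell k = 2\ell(k+1) - 1 - \ell k = \ell(k+2) - 1 = 2\ell + m - 1$, which is precisely \eqref{mB+lB no shift eq}. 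Then \cref{dynamical mB+lB unrestricted no shift} together with \cref{EP and return times}, exactly as in the proof that \cref{dynamical mB+lB unrestricted no shift} implies \cref{mB+lB unrestricted no shift}, produces an infinite $B \subset \N$ with $(\ell+m)B \subset A$ and $\{mb_1 + \ell b_2 : b_1 < b_2\} \subset A$, which together yield $\{mb_1 + \ell b_2 : b_1 \leq b_2\} \subset A$.

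The only genuinely new content is the density-threshold verification, which reduces to the algebraic identities $\ell + m = \ell(k+1)$ and $2\ell + m = \ell(k+2)$; there is no conceptual obstacle beyond these checks and the standard combinatorial-to-dynamical translation already carried out in Section \ref{unrestricted results}.
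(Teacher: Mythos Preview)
Your proposal is correct and matches the paper's approach exactly: the paper states that \cref{mB+lB lower density} follows by expanding on the arguments of Section~\ref{unrestricted results} using the refined correspondence principles (Lemmas~\ref{FCP lower density shift} and~\ref{FCP lower density no shift}) together with Theorems~\ref{dynamical mB+lB unrestricted shift}, \ref{dynamical mB+lB unrestricted no shift} and \cref{EP and return times}, and omits the details; your write-up supplies precisely those threshold verifications (via $\ell+m=\ell(k+1)$ and $2\ell+m=\ell(k+2)$) and the combinatorial unravelling, all correctly.
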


We have the following immediate corollary of \cref{mB+lB lower density}.

\begin{corollary} \label{mB+lB lower density cor.}
Let $A\subset \N$ and $\ell,m \in \N$. 
\begin{enumerate}
    \item If $\underline{\diff}(A)>1/2,$ there is an infinite set $B\subset \N$ and some $t\in \{0,1,\ldots,\ell+m-1\}$ such that $\{mb_1 + \ell b_2\colon  b_1,b_2\in B,\ b_1 \leq b_2\}+t \subset A.$
    \item If $\underline{\diff}(A)>1-1/(2(\ell+m)),$ there is an infinite set $B\subset \N$ such that $\{mb_1 + \ell b_2\colon  b_1,b_2\in B,\ b_1 \leq b_2\} \subset A.$
\end{enumerate}
\end{corollary}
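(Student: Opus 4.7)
The plan is very short because the corollary is essentially just the specialisation of \cref{mB+lB lower density} to the trivial bound $\overline{\diff}(A)\geq \underline{\diff}(A)$. I would split it into the two statements and verify each by checking that the hypothesis on $\underline{\diff}(A)$ already forces the relevant sum of densities to lie in the range covered by \cref{mB+lB lower density}.

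For part $(1)$, I would start by observing that $\overline{\diff}(A)\geq \underline{\diff}(A)$ for every $A\subset \N$, so the hypothesis $\underline{\diff}(A)>1/2$ implies
\[
\underline{\diff}(A)+\overline{\diff}(A)\geq 2\underline{\diff}(A)>1.
\]
This is precisely the hypothesis of \cref{mB+lB lower density}$(1)$, which gives an infinite $B\subset \N$ and a shift $t\in \{0,1,\ldots,\ell+m-1\}$ with $\{mb_1+\ell b_2\colon b_1,b_2\in B,\ b_1\leq b_2\}+t\subset A$, as required.

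For part $(2)$, I would argue analogously: if $\underline{\diff}(A)>1-1/(2(\ell+m))$, then
\[
\underline{\diff}(A)+\overline{\diff}(A)\geq 2\underline{\diff}(A)>2-\frac{1}{\ell+m},
\]
so \cref{mB+lB lower density}$(2)$ produces the required unshifted infinite set $B$. There is no real obstacle here — all the work has already been packaged into \cref{mB+lB lower density} via the correspondence principles of \cref{FCP lower density shift,FCP lower density no shift} combined with the dynamical theorems from Section \ref{unrestricted results}; the corollary is just extracting the cleanest consequence in terms of lower density alone.
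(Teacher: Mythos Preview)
Your proposal is correct and matches the paper's approach exactly: the paper simply declares this an immediate corollary of \cref{mB+lB lower density} without further argument, and the only possible content of that deduction is the observation $\overline{\diff}(A)\geq\underline{\diff}(A)$ that you spell out.
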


\begin{remark*}
Compare this result with the upper density analogues in \ref{mB+lB unrestricted shift} and \ref{mB+lB unrestricted no shift}. In particular, the threshold values for upper density depend on the parameters $m$ and $\ell$ involved in the sumsets. It is very surprising that this is no longer the case for threshold values of lower density, at least in the case of shifted sumsets. Of course, some kind of dependence in the case of unshifted patterns is enforced by the fact that infinite sumsets $mB+\ell B$ essentially ``live'' in $(m+\ell)\N$, as was utilised in the proof of \cref{mB+lB no shift restricted}.
\end{remark*}

We will conclude this discussion by proving that Corollary 
\ref{mB+lB lower density cor.} (and thus \cref{mB+lB lower density} too) is also optimal. 

\begin{proposition}\label{lower density examples}
Let $\ell,m\in \N$. There exist two sets 
$A,A'\subset \N$ with $\underline{\diff}(A)=1/2$ and $\underline{\diff}(A')=1-1/(2(\ell+m))$ such that for any infinite set $B$ and any integer $t\in \N$ we have that $\{mb_1 + \ell b_2 : b_1,b_2 \in B\ \text{and}\ b_1 \leq b_2\}+t \not\subset A$ and $\{mb_1 + \ell b_2 : b_1,b_2 \in B\ \text{and}\ b_1 \leq b_2\} \not\subset A'.$       
\end{proposition}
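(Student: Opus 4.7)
The plan is to construct both sets explicitly and to deduce $A'$ from $A$ via a pigeonhole argument. Both constructions follow the template of Proposition \ref{counterexamples upper density}, with the main difference being that the parameters must be tuned for lower density $1/2$ rather than upper density $(k+1)/(k+2)$.

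For $A$, I will build $A=\bigcup_n I_n$ as a union of intervals on carefully chosen geometric scales. The sumset-avoidance argument will proceed exactly as in the proof of Proposition \ref{counterexamples upper density}: for any $b'\in B$ and any large $b\in B$ such that $(\ell+m)b+t$ lies in some block $I_{n+1}$, the value $\ell b+mb'+t$ is forced into the preceding gap, since $\ell b=(\ell/(\ell+m))\cdot(\ell+m)b$ scales $I_{n+1}$ down to the gap just before it. If $I_n=[p_n,q_n)$ with $c=p_{n+1}/p_n$ and $\beta=q_n/p_n$, this scaling requires the constraint $c\geq \beta(k+1)$, where $k=m/\ell$. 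The lower density requirement $\underline{\diff}(A)=1/2$ in turn forces $\beta=(c+1)/2$ (computing the density at the ends of gaps). A direct calculation shows these two constraints are mutually incompatible for $k\geq 1$: plugging $\beta=(c+1)/2$ into $c\geq\beta(k+1)$ yields $c(1-k)\geq k+1$, which has no solution with $c>1$ when $k\geq 1$. Accordingly, a multi-scale construction is required: a ``primary'' scale of sparse blocks satisfying the scaling constraint (but with lower density below $1/2$), augmented by ``secondary'' filler blocks placed inside the primary gaps so as to boost the overall lower density up to $1/2$. The secondary blocks must be positioned so that their own images under $x\mapsto(\ell+m)x+t$ land in subsequent primary gaps, thereby not reintroducing sumset configurations.

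For $A'$, I will define $A':=A\cup(\N\setminus(\ell+m)\N)$, so that $\N\setminus A'=(\N\setminus A)\cap(\ell+m)\N$. Provided the construction of $A$ is arranged to distribute uniformly across residue classes modulo $\ell+m$ along the F\o lner sequence realizing $\underline{\diff}(A)=1/2$, one computes $\overline{\diff}(\N\setminus A')=1/(2(\ell+m))$, giving $\underline{\diff}(A')=1-1/(2(\ell+m))$. For the sumset avoidance of $A'$: any infinite $B$ with $\{mb_1+\ell b_2:b_1\leq b_2\}\subset A'$ admits, by pigeonhole, an infinite subset $B'\subset B$ of elements equal modulo $\ell+m$, whence the sumset of $B'$ lies in $(\ell+m)\N\cap A'=(\ell+m)\N\cap A\subset A$, i.e. a shifted sumset in $A$ with $t=0$, contradicting the construction of $A$.

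The main obstacle is the explicit construction of $A$. Unlike the upper density case, where a clean single-scale geometric scheme suffices, the lower density $1/2$ constraint is incompatible with the sumset-avoidance scaling condition at a single scale when $k\geq 1$, so one must carefully interleave primary and secondary scales. In addition, one has to ensure that $A$ has the uniform residue distribution modulo $\ell+m$ required for the $A'$ argument to pin down the density exactly. Once the construction of $A$ is in hand, both the sumset-avoidance verification and the deduction of $A'$ are routine adaptations of the arguments in Proposition \ref{counterexamples upper density}.
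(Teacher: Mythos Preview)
Your proposal is not a proof but a plan, and the central step---the explicit construction of $A$---is left as an acknowledged ``main obstacle'' with only a vague multi-scale scheme sketched. The placement of your ``secondary filler blocks'' is never specified, the claim that their images under the relevant scaling land in primary gaps is not verified, and it is not clear this can be made simultaneously compatible with the lower-density target $1/2$ and the uniform distribution modulo $\ell+m$ that you yourself flag as necessary for the $A'$ computation. As written, the proposal does not establish the proposition.

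The paper sidesteps all of this with a different and much simpler idea. Rather than engineering a single intricate set, it takes the upper-density counterexample $A_1$ from Proposition~\ref{counterexamples upper density} together with a symmetric companion $A_2$ (the complementary geometric blocks, so that $A_1\cup A_2$ has full density), and defines
\[
A=(A_1\cap 2\N)\cup\bigl(A_2\cap(2\N+1)\bigr).
\]
This set has natural density $1/2$ for free. For the avoidance, one passes to an infinite $B'\subset B$ of constant parity; the shifted sumset $\{mb_1+\ell b_2:b_1\le b_2\}+t$ then lies entirely in either $2\N$ or $2\N+1$, hence entirely in $A_1$ or entirely in $A_2$, contradicting the already-established properties of those sets. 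The set $A'$ is built the same way, as $(A_1\setminus 2(\ell+m)\N)\cup\bigl(A_2\setminus(2(\ell+m)\N+(\ell+m))\bigr)$, with the pigeonhole taken modulo $2(\ell+m)$. The device you are missing is this residue-class splitting of \emph{two} known counterexamples: it makes the density exact by construction and reduces the sumset-avoidance to the upper-density case, with no new geometric analysis required.
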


\begin{proof}[Proof of \cref{lower density examples}]
Let $k=m/\ell$ and $A_1,A_2$ be the subsets of $\N$ defined by
\begin{equation*}
    A_1 = \N \cap \bigcup_{n \in \N} [(k+1)^{2n}, (k+1-1/n) \cdot (k+1)^{2n})
\end{equation*}
and 
\begin{equation*}
    A_2 = \N \cap \bigcup_{n \in \N} [ (k+1+1/n) \cdot (k+1)^{2n}, (k+1)^{2(n+1)}).
\end{equation*}
We saw in \cref{counterexamples upper density} that there is no 
infinite set $B\subset \N$ and integer $t\in \N$ so that $\{mb_1 + \ell b_2 : b_1,b_2 \in B\ \text{and}\ b_1 \leq b_2\}+t \subset A_1$ and a symmetrical argument shows the same conclusion holds for $A_2$ as well.

Next, we observe that the set 
$$\N \setminus \left( A_1 \cup A_2 \right) = \N \cap \bigcup_{n \in \N} \left[ (k+1-1/n) \cdot 
(k+1)^{2n}, (k+1+1/n) \cdot (k+1)^{2n} \right]$$ 
has zero density. This follows from the fact that 
$$\lim_{N\to \infty} \sum_{n=1}^N \frac{1}{n \cdot (k+1)^{2(N-n)}}= 0,$$
which holds because $k+1>1$ (for a similar argument see the 
proof of Proposition $4.2$ in \cite{kousek_radic2024}).

We define $A\subset \N$ by
$$A = \left(A_1 \cap 2\N \right) \cup \left( A_2 \cap (2\N+1) \right)$$
and claim that this set satisfies the properties in the 
statement above. Clearly, by the last argument, we see that 
$\diff(A)=1/2$ and so, in particular, $\underline{\diff}(A)=1/2$. For the second property, assume for contradiction the existence of an infinite set $B\subset$ and some integer $t\in \N$ so that $\{mb_1 + \ell b_2 : b_1,b_2 \in B\ \text{and}\ b_1 \leq b_2\}+t \subset A$. We can pass to an infinite subset $B'\subset B$ all the elements of which have the same parity. 
Then, there is $i\in \{0,1\}$ such that for each 
$b_1,b_2 \in B'$ there are $n_1,n_2\in \N$ for which 
$b_1=2n_1+i$, $b_2=2n_2+i$ and then $mb_1+\ell b_2+t=2(mn_1+\ell n_2)+(\ell+m )i+t \in 2\N + (\ell+m )i+t$. But then, depending on wether $(\ell+m )i+t$ is even or odd we have that $\{mb_1 + \ell b_2 : b_1,b_2 \in B'\ \text{and}\ b_1 \leq b_2\}+t$ is contained in $A\cap 2\N$ or $A\cap (2\N+1)$, respectively. In each case we reach a contradiction, because it would have either have to hold that $\{mb_1 + \ell b_2 : b_1,b_2 \in B'\ \text{and}\ b_1 \leq b_2\}+t \subset A_1$ or $\{mb_1 + \ell b_2 : b_1,b_2 \in B'\ \text{and}\ b_1 \leq b_2\}+t \subset A_2$.

For the second part of the construction we keep $A_1$ and $A_2$ 
as they were defined above. Then, we let $A'\subset \N$ be 
defined by 
$$A'= \left(A_1 \setminus (2(\ell+m )\N \right) \cup 
\left( A_2 \setminus (2(\ell+m )\N+(\ell+m )) \right).$$
We already showed that $\diff(A_1 \cup A_2)=1$ and it is easy to 
see that $\diff(A')=1-1/(2(\ell+m ))$, because we are only 
removing a set of density $1/(2(\ell+m ))$ from $A_1 \cup A_2$. 
Finally, 
we claim there is no infinite set $B\subset \N$ satisfying 
$\{mb_1 + \ell b_2 : b_1,b_2 \in B\ \text{and}\ b_1 \leq b_2\}\subset A'$. Indeed, given such an infinite set $B$ we can
consider an infinite subset $B' \subset B$ with all its 
elements equivalent modulo $2(\ell+m )$. That is, there exists 
$j\in \{0,1,\dots,2(\ell+m )-1\}$ so that $b'\equiv j \pmod {2(\ell+m )}$, for any $b'\in B'$. But then we have 
$\{mb_1 + \ell b_2 : b_1,b_2 \in B\ \text{and}\ b_1 \leq b_2\}\subset 2(\ell+m )\N+(\ell+m )j.$
Depending on the parity of $j$ this either implies that $\{mb_1 + \ell b_2 : b_1,b_2 \in B\ \text{and}\ b_1 \leq b_2\}\subset A_1$ or $\{mb_1 + \ell b_2 : b_1,b_2 \in B\ \text{and}\ b_1 \leq b_2\}\subset A_2$, both of which are contradictions to the first part of the construction. 
\end{proof}

\begin{remark}
We stress that for both sets $A,A'$ above we actually have that 
their natural densities are realised. This was to be expected 
because of \cref{mB+lB lower density}. Indeed, we could not have, for example, 
a set $A\subset \N$ with $\underline{\diff}(A)=1/2$ but $\overline{\diff}(A)>1/2$ (i.e. the natural density of $A$ not realised) and also such that $\{mb_1 + \ell b_2 : b_1,b_2 \in B\ \text{and}\ b_1 \leq b_2\}+t \not\subset A$ for any infinite set $B$ and any integer $t\in \N$, because this would violate part $(1)$ of \cref{mB+lB lower density}.
\end{remark}

\section{Remarks and questions about further extensions}\label{questions}

A significant part of this work was focused on extensions
of \cref{mB+lB} by relaxing the restrictions imposed on the 
sumsets involved. In short, we managed to provide complete 
characterisation for the existence of the patterns $\{mb_1 + \ell b_2: b_1,b_2\in B\ \text{and}\ b_1\leq b_2 \}$ for infinite $B\subset \N$ in shifts of a set $A\subset \N$, based on the values for the upper and lower natural densities of $A$. 

Another direction could be to consider patterns of the form $\{mb_1 + \ell b_2: b_1,b_2\in B\ \text{and}\ b_1\neq b_2 \}$. We recall \cref{unrestricted patterns counterexample}, according to which an assumption of positive upper density is not sufficient to find shifts of such infinite sumsets in a subset of the integers (unless, of course, $\ell=m$). A natural question, then, is if this problem has a `density solution'. We present two related constructions 
(which, we believe, can not be improved) and then state the
question precisely. 

\begin{proposition}\label{more examples}
Let $\ell,m \in \N$ be distinct with $\ell>m$. There exist two sets $A, A'\subset \N$ with $\overline{\diff}(A)=\ell/(\ell+m)$ and $\overline{\diff}(A)=1-m/(\ell+m)^2$ such that for any infinite set $B\subset \N$ and any $t\in \N$ we have $\{mb_1 + \ell b_2: b_1,b_2\in B\ \text{and}\ b_1\neq b_2 \} \not \subset A-t$ and $\{mb_1 + \ell b_2: b_1,b_2\in B\ \text{and}\ b_1\neq b_2 \} \not \subset A'$.  
\end{proposition}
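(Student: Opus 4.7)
The plan is to adapt the constructions from the proof of Proposition~\ref{counterexamples upper density}. The new ingredient is that, with the restriction $b_1 \neq b_2$ instead of $b_1 \leq b_2$, a fixed $b_0 \in B$ and large $b_1 \in B$ force both sums $mb_0 + \ell b_1 + t$ and $mb_1 + \ell b_0 + t$ to lie in $A$; these are approximately $\ell b_1$ and $m b_1$, which differ multiplicatively by $r := \ell/m > 1$. So I would design $A$ with a multiplicative structure of ratio $r$, ensuring that no large integer $v$ has both $v \in A$ and $rv \in A$.

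Concretely, I would set $r = \ell/m$ and
$$A = \N \cap \bigcup_{n\geq 1} \bigl[r^{2n},\, (r - 1/n)r^{2n}\bigr).$$
Following the geometric-series calculation from the proof of Proposition~\ref{counterexamples upper density}, along the F\o lner sequence $\Phi_N = [1, (r - 1/N)r^{2N}] \cap \N$ I would verify that $|A \cap \Phi_N|/|\Phi_N| \to r/(r+1) = \ell/(\ell+m)$, and that this is the supremum of densities along any sampling, giving $\overline{\diff}(A) = \ell/(\ell+m)$. For the combinatorial property, assume toward contradiction that some infinite $B$ and $t \in \N$ satisfy $\{mb_1 + \ell b_2 \colon b_1 \neq b_2 \in B\} + t \subset A$. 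Fix $b_0 \in B$ and let $b_1 \in B$ with $b_1 \gg b_0 + t$. If $mb_1 + \ell b_0 + t \in A$, say it lies in $[r^{2n}, (r - 1/n)r^{2n})$, then multiplying by $r$ (using $\ell = rm$) yields
$$\ell b_1 + m b_0 + t \in \bigl[r^{2n+1} - D,\; r^{2n+2} - r^{2n+1}/n - D\bigr)$$
for the constant $D := r(\ell b_0 + t) - (mb_0 + t)$ depending only on $b_0, t$. For $n$ large (guaranteed by $b_1$ large), this range lies entirely in the gap $\bigl((r - 1/n)r^{2n},\ r^{2(n+1)}\bigr)$ of $A$, because both gap margins grow like $r^{2n}/n$ while $D$ stays bounded. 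Thus $\ell b_1 + m b_0 + t \notin A$, a contradiction.

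For $A'$, I would reuse the trick from Proposition~\ref{counterexamples upper density} and take $A' = A \cup (\N \setminus (\ell + m)\N)$. Then $\N \setminus A' = (\N \setminus A) \cap (\ell+m)\N$, and since $\N \setminus A$ is a disjoint union of long geometric intervals each intersecting the periodic set $(\ell+m)\N$ in a $1/(\ell+m)$-fraction up to $O(1)$ error, one obtains $\underline{\diff}(\N \setminus A') = \underline{\diff}(\N \setminus A)/(\ell+m) = m/(\ell+m)^2$, whence $\overline{\diff}(A') = 1 - m/(\ell+m)^2$. If an infinite $B$ had $\{mb_1 + \ell b_2 \colon b_1 \neq b_2\} \subset A'$, I would pass to an infinite $B' \subset B$ whose elements all share a common residue class mod $\ell+m$; then $mb_1 + \ell b_2 \equiv 0 \pmod{\ell + m}$ for $b_1, b_2 \in B'$, so the $B'$-sumset lies in $A' \cap (\ell+m)\N = A \cap (\ell+m)\N \subset A$, contradicting the first construction with $t = 0$. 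The main obstacle is the bookkeeping in the combinatorial step for $A$: one must confirm that the $O(b_0 + t)$-size additive corrections do not push $\ell b_1 + mb_0 + t$ into either adjacent $A$-interval, but this is a direct consequence of the $r^{2n}/n \to \infty$ margin estimate noted above.
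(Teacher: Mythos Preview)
Your proposal is correct and matches the paper's proof essentially line for line: the same set $A=\N\cap\bigcup_n[r^{2n},(r-1/n)r^{2n})$ with $r=\ell/m$, the same geometric density computation, the same $A'=A\cup(\N\setminus(\ell+m)\N)$, and the same pigeonhole to a common residue class $B'$. The only cosmetic difference is that you start from the smaller sum $mb_1+\ell b_0+t\in A$ and multiply by $r$ to locate the larger sum in a gap, whereas the paper starts from the larger sum and divides; the margin estimate $r^{2n}/n\to\infty$ that absorbs the bounded correction $D$ is identical in both directions.
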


\begin{remark}\label{golden ratio}
It may be interesting to compare these bounds with the optimal bounds established for the sumsets $\{mb_1 + \ell b_2 \colon b_1,b_2 \in B,\ \text{and}\ b_1\leq b_2\}$ in Theorems \ref{mB+lB unrestricted shift}, \ref{mB+lB unrestricted no shift} and Proposition \ref{counterexamples upper density}. In particular, by assumption we have that $k=m/\ell<1$ and then $\ell/(\ell+m)=1/(k+1)$, and this is greater than $(k+1)/(k+2)$, whenever $k(k+1)<1$. Also, $1-m/(\ell+m)^2$ is greater than $1-1/(\ell(k+1)(k+2))=1-1/((\ell+m)(k+2))$, again, precisely when $k(k+1) < 1$. 

The surprising fact that springs from this observation is the
following; Say, without loss of generality, that $\ell>m$. Then, the largest bounds (we can find) for the value of upper density for a set that doesn't contain sumsets of the form $\{mb_1 + \ell b_2 \colon b_1,b_2 \in B,\ \text{and}\ b_1 \neq b_2\}$ are greater than the optimal bounds for a set that doesn't contain sumsets $\{mb_1 + \ell b_2 \colon b_1,b_2 \in B,\ \text{and}\ b_1 \leq b_2\}$ whenever $\ell/m=1/k$ is greater than the Golden ratio! The relation between the bounds is reversed if $\ell/m=1/k$ is less than the Golden ratio and the same comparisons hold for unshifted patterns. 

We can also compare the bounds of \cref{more examples} with the ones from the aforementioned results, but for the sumsets $\{\ell b_1+mb_2 \colon b_1,b_2\in B\ \text{and}\ b_1\leq b_2\}$. In this case, the comparison is not nearly as mystical, for the latter sumsets cannot be found in sets of upper density up to $(k^{-1}+1)/(k^{-1}+2)$ and $1-1/(m(k^{-1}+1)(k^{-1}+2))$, for the cases of shift and no shift, respectively. But both of these are larger than the respective bounds from \cref{more examples}, i.e. $\ell/(\ell+m)$ and $1-m/(\ell+m)^2$.
\end{remark}

\begin{proof}[Proof of \cref{more examples}]
The proof has similar features to the proofs of Propositions \ref{counterexamples upper density} and \ref{lower density examples}, so we try to ease exposition by avoiding repetitive arguments. 

We consider the 
set $A\subset \N$ defined by
$$A = \N \cap \bigcup_{n \in \N} [(\ell/m)^{2n}, (\ell/m-1/n) \cdot (\ell/m)^{2n})$$
Similarly to the proof of \cref{counterexamples upper density} we see that $\overline{\diff}(A)=\ell/(\ell+m)$. We claim that there is no infinite set $B\subset \N$ and integer $t\in \N$ such that $\{mb_1 + \ell b_2: b_1,b_2\in B\ \text{and}\ b_1\neq b_2 \} \not \subset A-t$. 

Indeed, say $B\subset \N$ is an infinite set and $t\in \N$ 
an integer negating the claim. Then, for any $b_1\in B$ we may choose $b_2\in B$ arbitrarily large with respect to $b_1$ and note that $\alpha=mb_1+\ell b_2$, $\beta=mb_2 + \ell b_1  \in A-t$. We observe that $\alpha=(\ell/m)\beta - (\ell^2/m-m)b_1$ and so we can choose $b_2$ so large that $c(b_1):=(\ell^2/m-m)b_1$ is negligible (in a way to be made precise below) with respect to $\alpha$. 

Let $n\in \N$ be such that $\alpha \in  [(\ell/m)^{2n}, (\ell/m-1/n) \cdot (\ell/m)^{2n})-t$. Then, 
$$\beta = \frac{1}{(\ell/m)}\left(\alpha+c(b_1)\right) \in [(\ell/m)^{2n-1}, (\ell/m-1/n) \cdot (\ell/m)^{2n-1}) + \frac{(c(b_1)-t)}{(\ell/m)}$$
and we choose $b_2$ large enough so that $n\in \N$ is in turn large enough in order for the above to imply that
$\beta + t \in  [(\ell/m)^{2n-1}, (\ell/m)^{2n}),$
hence $\beta \notin A-t$, a contradiction. 

We finally consider the set $A'=A \cup \left( \bigcup_{j=1}^{\ell+m-1} (\ell+m)\N+j \right)$ and arguing as in the proof of \cref{counterexamples upper density} we see that $\overline{\diff}(A)=1-m/(\ell+m)^2$ and that there is no infinite set $B\subset \N$ such that $\{mb_1 + \ell b_2: b_1,b_2\in B\ \text{and}\ b_1\neq b_2 \} \subset A'.$ 
\end{proof}

\begin{remark*}
Observe that the set $A\subset \N$ above is thick and as 
such ${\diff}^{*}(A)=1$.     
\end{remark*}

Despite the urge to make a conjecture, due to limited dynamical as well as combinatorial evidence -- apart from the very interesting \cref{golden ratio}, which hints that the bounds of \cref{more examples} may not be unrelated to the optimal bounds -- we constrain ourselves to asking the following question. 

\begin{question}\label{Q1}
Let $\ell,m \in \N$ be distinct with $\ell>m$ and let $A\subset \N$. 
\begin{enumerate}[(i)]
    \item If $\overline{\diff}(A)>\ell/(\ell+m)$, does there exist an infinite set $B\subset \N$ and some $t\in \N$ such that $\{mb_1 + \ell b_2: b_1,b_2\in B\ \text{and}\ b_1\neq b_2 \}+t \subset A$?  
    \item If $\overline{\diff}(A)>1-m/(\ell+m)^2$, does there exist an infinite set $B\subset \N$ such that $\{mb_1 + \ell b_2: b_1,b_2\in B\ \text{and}\ b_1\neq b_2 \} \subset A$?  
\end{enumerate}
\end{question}

With the sets constructed in \cref{more examples} as a basis, we can mimic the argument from the proof of \cref{lower density examples} and recover the following result. 

\begin{proposition} \label{more examples lower density}
Let $\ell,m\in \N$ be distinct with $\ell>m$. There exist two sets $A,A'\subset \N$ with $\underline{\diff}(A)=1/2$ and $\underline{\diff}(A')=1-1/(2(\ell+m))$ such that for any infinite set $B$ and any integer $t\in \N$ we have that $\{mb_1 + \ell b_2 : b_1,b_2 \in B\ \text{and}\ b_1 \neq b_2\}+t \not\subset A$ and $\{mb_1 + \ell b_2 : b_1,b_2 \in B\ \text{and}\ b_1 \neq b_2\} \not\subset A'.$     
\end{proposition}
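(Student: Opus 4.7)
The plan is to adapt the construction from \cref{lower density examples} using the sets built in \cref{more examples} in place of those from \cref{counterexamples upper density}. Concretely, I replace the base $k+1 = (\ell+m)/\ell$ by the ratio $\ell/m$ (which is now $>1$ since $\ell>m$) and let
\begin{equation*}
A_1 = \N \cap \bigcup_{n\in \N}\bigl[(\ell/m)^{2n},\ (\ell/m - 1/n)\cdot (\ell/m)^{2n}\bigr)
\end{equation*}
be the "left halves" and
\begin{equation*}
A_2 = \N \cap \bigcup_{n\in \N}\bigl[(\ell/m + 1/n)\cdot (\ell/m)^{2n},\ (\ell/m)^{2(n+1)}\bigr)
\end{equation*}
be the "right halves" of the dyadic (base $\ell/m$) intervals. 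The proof of \cref{more examples} already shows that $A_1$ contains no shift $\{mb_1+\ell b_2 : b_1,b_2\in B,\ b_1\ne b_2\} + t$ for any infinite $B$, and a symmetric argument (interchanging the roles of $\alpha$ and $\beta$) gives the same for $A_2$.

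The next step is to observe that the "gap" set
$$
\N\setminus (A_1\cup A_2)= \N\cap \bigcup_{n\in\N}\bigl[(\ell/m-1/n)(\ell/m)^{2n},\ (\ell/m+1/n)(\ell/m)^{2n}\bigr]
$$
has zero natural density: this follows, as in the proof of \cref{lower density examples}, from the convergence $\sum_{n=1}^N \frac{1}{n(\ell/m)^{2(N-n)}}\to 0$, which holds because $\ell/m>1$. Consequently $\diff(A_1\cup A_2)=1$. I then set
\begin{equation*}
A = (A_1 \cap 2\N)\cup (A_2\cap (2\N+1)),
\end{equation*}
which has natural density $1/2$, in particular $\underline{\diff}(A)=1/2$.

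Suppose, for contradiction, that some infinite $B\subset \N$ and $t\in \N$ satisfy $\{mb_1+\ell b_2:b_1,b_2\in B,\ b_1\ne b_2\}+t\subset A$. Passing to an infinite subset $B'\subset B$ all of whose elements share the same parity $i\in\{0,1\}$, every sum $mb_1+\ell b_2+t$ with $b_1,b_2\in B'$ is congruent to $(\ell+m)i+t$ modulo $2$, hence lies entirely in $A\cap 2\N\subset A_1$ or in $A\cap(2\N+1)\subset A_2$. Either case contradicts the previous paragraph. For the second set, I put
\begin{equation*}
A' = \bigl(A_1 \setminus 2(\ell+m)\N\bigr)\cup\bigl(A_2\setminus (2(\ell+m)\N+(\ell+m))\bigr);
\end{equation*}
since we are removing a set of density $1/(2(\ell+m))$ from $A_1\cup A_2$, we get $\diff(A')=1-1/(2(\ell+m))$. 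If some infinite $B$ satisfied $\{mb_1+\ell b_2:b_1\ne b_2\}\subset A'$, then passing to $B'\subset B$ with all elements congruent to some fixed $j$ modulo $2(\ell+m)$ forces the sumset into $2(\ell+m)\N+(\ell+m)j$; according to the parity of $j$, this cofiber meets $A'$ only inside $A_1$ or only inside $A_2$, again contradicting \cref{more examples}. The main (small) obstacle is merely keeping track of the parity/modular bookkeeping needed to route the sumset into one of $A_1,A_2$; the density computations and the "avoid $\{mb_1+\ell b_2\}$" step are already in hand.
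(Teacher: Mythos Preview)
Your proposal is correct and follows essentially the same approach as the paper: both construct the same $A_1,A_2$ with base $\ell/m$, set $A=(A_1\cap 2\N)\cup(A_2\cap(2\N+1))$ and $A'=(A_1\setminus 2(\ell+m)\N)\cup(A_2\setminus(2(\ell+m)\N+(\ell+m)))$, and then use the parity/modular passage to route the sumset into one of $A_1,A_2$. If anything, you spell out more of the density and bookkeeping details that the paper defers to the proof of \cref{lower density examples}.
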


\begin{proof}
Let $A_1 , A_2 \subset \N$ be defined by
$$A_1 = \N \cap \bigcup_{n \in \N} [(\ell/m)^{2n}, (\ell/m-1/n) \cdot (\ell/m)^{2n})$$
and
$$A = \N \cap \bigcup_{n \in \N} [(\ell/m+1/n)\cdot (\ell/m)^{2n}, (\ell/m)^{2(n+1)}).$$
By the proof of \cref{more examples} and a symmetrical argument for $A_2$, neither of the sets contains a sumset of the form $\{mb_1 + \ell b_2 : b_1,b_2 \in B\ \text{and}\ b_1 \neq b_2\}+t$, for any infinite $B\subset \N$ and any $t\in \N$. Then, analogously to the proof of \cref{lower density examples}, we define $A=(A_1\cap 2\N) \cup (A_2\cap (2\N+1))$ and $A'=(A_1 \setminus 2(\ell+m)\N) \cup (A_2 \setminus (2(\ell+m)\N+(\ell+m)))$ and the claimed properties of $A,A'$ follow similarly to the proof of \cref{lower density examples}.
\end{proof}

Therefore, we also ask the following. 

\begin{question}
Let $\ell,m \in \N$ be distinct and let $A\subset \N$. 
\begin{enumerate}[(i)]
    \item If $\underline{\diff}(A)>1/2$, does there exist an infinite set $B\subset \N$ and some $t\in \N$ such that $\{mb_1 + \ell b_2: b_1,b_2\in B\ \text{and}\ b_1\neq b_2 \}+t \subset A$?  
    \item If $\underline{\diff}(A)>1-1/(2(\ell+m))$, does there exist an infinite set $B\subset \N$ such that $\{mb_1 + \ell b_2: b_1,b_2\in B\ \text{and}\ b_1\neq b_2 \} \subset A$?  
\end{enumerate}
\end{question}

Provided Question \ref{Q1} has a positive answer one 
should also inquire about the completely unrestricted 
problem, that is, the existence of infinite sumsets 
$mB+\ell B=\{mb_1 + \ell b_2: b_1,b_2\in B\}$. To this end, we ask two more questions after making an 
observation which follows from current results. 

\begin{proposition}\label{counterexamples upper density unrestricted}
Let $\ell,m\in \N$ be distinct with $m>\ell$ and let $k=m/\ell$. Then, there exist sets $A,A' \subset \N$ with 
$\overline{\diff}(A)=(k+1)/(k+2)$
and $\overline{\diff}(A')=1-1/\left(\ell(k+1)(k+2)\right)$
such that for any infinite set $B$ and any integer $t\in \N$ we have that $\{mb_1 + \ell b_2 : b_1,b_2 \in B\}+t \not\subset A$ and $\{mb_1 + \ell b_2 : b_1,b_2 \in B\} \not\subset A'.$       
\end{proposition}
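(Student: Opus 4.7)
The plan is to reduce directly to Proposition \ref{counterexamples upper density}. I would take as $A$ and $A'$ the very same sets constructed there for the given values of $m$, $\ell$ and $k = m/\ell$, namely
$$A \;=\; \N \cap \bigcup_{n \in \N} \bigl[(k+1)^{2n},\, (k+1 - 1/n)\cdot (k+1)^{2n}\bigr),$$
and $A' = A \cup \bigl( \bigcup_{j=1}^{\ell+m-1} ((\ell+m)\N + j) \bigr)$. Those computations already give the required upper densities $(k+1)/(k+2)$ and $1 - 1/(\ell(k+1)(k+2))$, so there is nothing new to verify on the density side.

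The key observation is purely set-theoretic: for any $B \subset \N$ we have the obvious inclusion
$$\{mb_1 + \ell b_2 : b_1, b_2 \in B,\ b_1 \leq b_2\} \;\subset\; \{mb_1 + \ell b_2 : b_1, b_2 \in B\}.$$
Consequently, if the unrestricted sumset on the right were contained in $A - t$ for some infinite $B$ and some $t \in \N$, then so would the restricted sumset on the left, contradicting the first part of Proposition \ref{counterexamples upper density}. The same argument applies verbatim to $A'$ without a shift, using the second part of Proposition \ref{counterexamples upper density}. So neither containment can hold and we are done.

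The only genuinely new ingredient worth commenting on is the role of the hypothesis $m > \ell$, which distinguishes this statement from Proposition \ref{counterexamples upper density}. The unrestricted sumset $\{mb_1 + \ell b_2 : b_1, b_2 \in B\}$ is symmetric under swapping $m \leftrightarrow \ell$ (by relabeling $b_1 \leftrightarrow b_2$ in the index set), so a sharp density threshold must depend on $\{\ell, m\}$ as an unordered pair. The function $k \mapsto (k+1)/(k+2)$ is strictly increasing, so the larger -- and hence sharper -- value is attained by using $k = \max(m,\ell)/\min(m,\ell) > 1$; fixing $m > \ell$ is merely the convention that enforces this. There is no substantive obstacle here, as the whole deduction is essentially a one-line containment; if anything is subtle, it is only the bookkeeping needed to check that the $A'$-construction in Proposition \ref{counterexamples upper density} really did not use $b_1 \leq b_2$ anywhere in the density computation, which it did not.
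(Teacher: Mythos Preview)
Your proposal is correct and follows essentially the same approach as the paper: both take the sets $A$, $A'$ from Proposition~\ref{counterexamples upper density} and use the containment $\{mb_1+\ell b_2: b_1\le b_2\}\subset\{mb_1+\ell b_2: b_1,b_2\in B\}$ to reduce to that earlier result. The paper additionally records that the swapped restricted sumset $\{\ell b_1+mb_2: b_1\le b_2\}$ is also contained in the unrestricted one, in order to justify via $\max\{(k+1)/(k+2),\,(k^{-1}+1)/(k^{-1}+2)\}=(k+1)/(k+2)$ why the hypothesis $m>\ell$ selects the larger bound---exactly the point you make through monotonicity of $k\mapsto (k+1)/(k+2)$.
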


\begin{proof}
All we need to observe is that both sumsets $ \{mb_1 + \ell b_2 : b_1,b_2 \in B\ \text{and}\ b_1 \leq b_2\}$ and $\{\ell b_1 + m b_2 : b_1,b_2 \in B\ \text{and}\ b_1 \leq b_2\}$ are contained in $\{mb_1 + \ell b_2 : b_1,b_2 \in B\} $, for any $B\subset \N$, and so we can reduce to \cref{counterexamples upper density}. Indeed, if a set fails to contain either of the former sumsets, then it will also fail to contain $\{mb_1 + \ell b_2 : b_1,b_2 \in B\ \text{and}\ b_1 \leq b_2\}$. Now, as $\ell>m$, we have that $$\max\{(k+1)/(k+2),(k^{-1}+1)/(k^{-1}+2)\} = (k+1)/(k+2),$$
and so it follows by \cref{counterexamples upper density} that there exists a set $A\subset \N$ with $\overline{\diff}(A)=(k+1)/(k=2)$ and such that for any infinite $B\subset \N$ and $t\in \N$ we have that $\{m b_1 + \ell b_2 : b_1,b_2 \in B\ \text{and}\ b_1 \leq b_2\}+t \not \subset A$. In particular, it follows that $\{mb_1 + \ell b_2 : b_1,b_2 \in B\} \not \subset A-t$. 
\end{proof}

Repeating the same observation for values of lower 
density and \cref{lower density examples} we have the following proposition. 

\begin{proposition}\label{lower density examples unrestricted}
Let $\ell,m\in \N$. There exist two sets 
$A,A'\subset \N$ with $\underline{\diff}(A)=1/2$ and $\underline{\diff}(A')=1-1/(2(\ell+m))$ such that for any infinite set $B$ and any integer $t\in \N$ we have that $\{mb_1 + \ell b_2 : b_1,b_2 \in B\} \not\subset A-t$ and $\{mb_1 + \ell b_2 : b_1,b_2 \in B\} \not\subset A'.$       
\end{proposition}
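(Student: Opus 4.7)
The plan is to mimic exactly the argument used in the proof of \cref{counterexamples upper density unrestricted}, but with \cref{lower density examples} playing the role that \cref{counterexamples upper density} played there. The key observation is the elementary set-theoretic inclusion
$$\{mb_1 + \ell b_2 : b_1,b_2 \in B,\ b_1\leq b_2\} \subset \{mb_1 + \ell b_2 : b_1,b_2 \in B\}$$
valid for any $B\subset \N$. Consequently, if for some infinite $B\subset \N$ and some $t\in \N$ we had $\{mb_1+\ell b_2:b_1,b_2\in B\}\subset A-t$, then the same $B$ and $t$ would yield $\{mb_1+\ell b_2:b_1,b_2\in B,\ b_1\leq b_2\}+t\subset A$, contradicting the first conclusion of \cref{lower density examples}. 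The identical argument applied to $A'$ (with $t=0$) gives the second assertion.

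A minor but noteworthy point, in contrast with the upper density case, is that the lower density thresholds $1/2$ and $1-1/(2(\ell+m))$ provided by \cref{lower density examples} are already symmetric in $\ell$ and $m$. Thus, unlike in the proof of \cref{counterexamples upper density unrestricted}, where one had to pick between the two asymmetric bounds $(k+1)/(k+2)$ and $(k^{-1}+1)/(k^{-1}+2)$ arising from the two orientations of the ordered sumset, here we do not need to consider the orientation $\{\ell b_1 + m b_2 : b_1\leq b_2\}$ separately. The very same sets $A$ and $A'$ produced in \cref{lower density examples} can be exhibited without any modification as the required counterexamples, and their lower densities $1/2$ and $1-1/(2(\ell+m))$ transfer to the unrestricted setting.

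There is essentially no obstacle: once \cref{lower density examples} is in hand, the unrestricted version follows by a one-line inclusion argument, and the proposition can be dispatched in a few lines.
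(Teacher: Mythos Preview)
Your proposal is correct and follows exactly the approach the paper indicates: it reduces to \cref{lower density examples} via the trivial inclusion $\{mb_1+\ell b_2:b_1\leq b_2\}\subset\{mb_1+\ell b_2:b_1,b_2\in B\}$, precisely mirroring the reduction in the proof of \cref{counterexamples upper density unrestricted}. Your observation that the lower density thresholds are already symmetric in $\ell$ and $m$, so that no maximum needs to be taken, is a nice clarifying remark.
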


It would be surprising if either of the next questions 
had a negative answer, but apart from our inability 
to improve on the previous constructions there is no evidence suggesting the opposite.

\begin{question}
Let $\ell,m \in \N$ be distinct with $m>\ell$ and let $k=m/\ell$. Let also $A\subset \N$. 
\begin{enumerate}[(i)]
    \item If $\overline{\diff}(A)>(k+1)/(k+2)$, does there exist an infinite set $B\subset \N$ and some $t\in \N$ such that $\{mb_1 + \ell b_2: b_1,b_2\in B\}+t \subset A$?  
    \item If $\overline{\diff}(A)>1-1/\left(\ell(k+1)(k+2)\right)$, does there exist an infinite set $B\subset \N$ such that $\{mb_1 + \ell b_2: b_1,b_2\in B\} \subset A$?  
\end{enumerate}    
\end{question}

\begin{question}
Let $\ell,m \in \N$ be distinct and $A\subset \N$. 
\begin{enumerate}[(i)]
    \item If $\underline{\diff}(A)>1/2$ does there exist an infinite set $B\subset \N$ and some $t\in \N$ such that $\{mb_1 + \ell b_2: b_1,b_2\in B \}+t \subset A$?  
    \item If $\underline{\diff}(A)>1-1/(2(\ell+m))$ does there exist an infinite set $B\subset \N$ such that $\{mb_1 + \ell b_2: b_1,b_2\in B\} \subset A$?  
\end{enumerate}    
\end{question}

As the above exposition on unrestricted sumsets $mB+\ell B$ focused on the case of distinct $\ell$ and $m$, we want to highlight that, since $\{mb_1 + mb_2 \colon b_1,b_2 \in B\ \text{and}\ b_1\leq b_2 \}=mB+mB$, the special unrestricted sumsets of the form $mB+m B$ are already covered in Theorems \ref{mB+lB unrestricted shift}, \ref{mB+lB unrestricted no shift}, \cref{mB+lB lower density cor.} and Propositions  \ref{counterexamples upper density}, \ref{lower density examples}. In particular, for some $A\subset \N$ and $m\in \N$ we have the following implications, all of which are optimal. 
\begin{enumerate}[(i)]
    \item If $\overline{\diff}(A)>2/3$, there exist some infinite set $B\subset \N$ and some $t\in \{0,1,\ldots,2m-1\}$ such that $mB+mB+t \subset A$. 
    \item If $\overline{\diff}(A)>1-1/(6m)$, there exists some infinite set $B\subset \N$ such that $mB+mB \subset A$. 
    \item If $\underline{\diff}(A)>1/2$, there exist some infinite set $B\subset \N$ and some $t\in \{0,1,\ldots,2m-1\}$ such that $mB+mB+t \subset A$. 
    \item If $\underline{\diff}(A)>1-1/(4m)$, there exists some infinite set $B\subset \N$ such that $mB+mB \subset A$. 
\end{enumerate}

We note that the bounds in $(i)$ do not depend on $m$, 
because the optimal bounds established in \cref{mB+lB unrestricted shift} explicitly depend only on the ratio $k=m/\ell$, which in this 
case is always $1$. This suggests that perhaps $(i)$ above 
already follows from \cref{KousekRadic theorem}, and this is 
indeed the case. The proof of this fact is nice and not very 
complicated, but serves no other purpose as to be included here. Similarly, using \cref{KousekRadic theorem}, one can prove that for any set $A\subset \N$ with $\overline{\diff}(A)>5/6$, there exists $B\subset \N$ infinite and some $t\in \{0,1,\ldots,m-1\}$ such that $mB+mB+t \subset A$ -- note the difference for the range of the shift here -- but of course, density larger than $5/6$ is no longer sufficient for us to remove potential shifts, as the bounds in part $(ii)$ above are optimal. 

\medskip

\bibliographystyle{abbrv}

\end{document}